\begin{document}

\title{Cyclic Network Automata and Cohomological Waves}
\author{Yiqing Cai, Robert Ghrist
 }

\maketitle

\newtheorem{theorem}{Theorem}
\newtheorem{definition}{Definition}
\newtheorem{lemma}{Lemma}
\newtheorem{statement}{Statement}
\newtheorem{corollary}{Corollary}
\newtheorem{proposition}{Proposition}

\newcommand{\style}[1]{{\bf{#1}}} 

\newcommand{\ie}{{\em i.e.}}
\newcommand{\eg}{{\em e.g.}}
\newcommand{\cf}{{\em cf.}}

\newcommand{\real}{{\mathbb R}}
\newcommand{\comp}{{\mathbb C}}
\newcommand{\nats}{{\mathbb N}}
\newcommand{\rats}{{\mathbb Q}}
\newcommand{\zed}{{\mathbb Z}}
\newcommand{\euc}{{\mathbb E}}
\newcommand{\Domain}{{\mathcal D}} 
\newcommand{\Alphabet}{{\mathcal A}} 
\newcommand{\Graph}{{\mathcal G}} 
\newcommand{\Tree}{{\mathcal T}} 
\newcommand{\Rips}{{\mathcal R}} 
\newcommand{\Neighbor}{{\mathcal N}} 
\newcommand{\Update}{{\mathcal G}} 
\newcommand{\Statespace}{{\mathbb{Z}_n^X}} 
\newcommand{\cont}{{Cont}} 
\newcommand{\degree}{{deg}} 
\newcommand{\pofs}{{p_s}} 

\begin{abstract}
This paper considers a dynamic coverage problem for sensor networks that are sufficiently dense but not localized. Only a small fraction of sensors may be in an \em{awake} state at any given time. The goal is to find a decentralized protocol for establishing dynamic, sweeping barriers of awake-state sensors. Following Baryshnikov-Coffman-Kwak \cite{gha}, we use network cyclic cellular automata to generate waves. This paper gives a rigorous analysis of network-based cyclic cellular automata in the context of a system of narrow hallways and shows that waves of awake-state nodes turn corners and automatically solve {\em pusuit/evasion}-type problems without centralized coordination.

As a corollary of this work, we unearth some interesting topological interpretations of features previously observed in cyclic cellular automata (CCA). By considering CCA over networks and completing to simplicial complexes, we induce dynamics on the higher-dimensional complex. In this setting, waves are seen to be generated by topological defects with a nontrivial degree (or winding number). The simplicial complex has the topological type of the underlying map of the workspace (a subset of the plane), and the resulting waves can be classified cohomologically. This allows one to ``program'' pulses in the sensor network according to cohomology class. We give a realization theorem for such pulse waves.
\end{abstract}

\section{Introduction}
\label{sec:intro}

A \style{wireless sensor network (WSN)} consists of a collection of sensors networked via wireless communications, with every sensor being a device collecting data of the environment with respect to one or more features, and returning with a signal \cite{wsn_yick, Akyildiz02wirelesssensor}. Sensors can read, {\em inter alia}, temperature, pressure, sound, target presence, range, and identification. Current-generation smart sensors, increasingly smaller in size, can perform data processing and computation, albeit with very limited memory and computation capability. However, constrained by the locality of sensors' sensing function, networks of sensors have many more applications in monitoring larger domains than a single sensor. They can communicate with each other in the sense of transmitting and receiving signals, which allows local information to be collected and agregated globally.

A very common application of wireless sensor networks is intrusion-detection: the network monitors an area, reporting the existence of intruders when they are detected by at least one sensor. Video surveillance provides one such example. There is considerable activity in this field, focusing on different features and goals, "optimizing" networks in various senses. One such aspect concerns coverage problems, which consider whether a domain is always fully covered by the union of sensing regions of sensors, static or mobile. Current approaches include methods from graph theory, computational geometry, and algebraic topology \cite{fekete,916633, Huang, coverage_silva}. Other aspects focus on providing a specific degree of coverage, while keeping the connectivity of the network \cite{ccp}. What concerns us most in the present work is the minimization of energy consumption, keeping in mind that sensors are almost always battery-driven. One of the most intuitive ways is constructing a sleep-wake protocol for the network, allowing sensors to alternate between higher and lower energy cost states \cite{gha}.

\begin{figure}
\centering
\epsfig{file=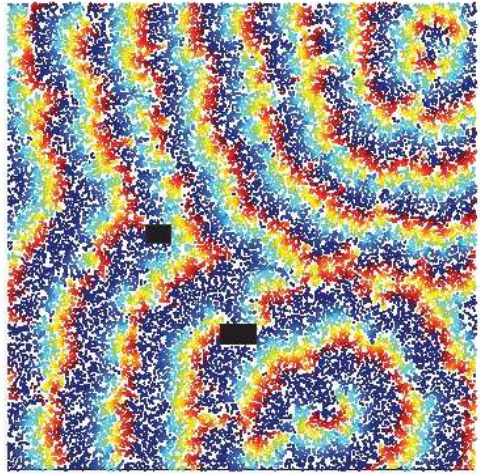,height=3in,width=4in}
\caption{Greenberg-Hastings model in square with blocks: black blocks are regions where no sensors are located, waves behave the same as with no blcks.}
\label{fig:ghm in square}
\end{figure}

This paper is motivated by recent work suggesting the use of cyclic cellular automata (CCA) for intrusion-detection sensor networks \cite{gha}. This inventive paper applied the Greenberg-Hastings automata on a two dimensional plane to generate ``waves'' of on-state sensors for intruder detection. Generally speaking, this specific automata assigns to each sensor the state space $\zed_n$ (the cyclic group on $n$ elements), and the sensors update their state by advancing one automatically, except in state $0$, in which case update to state $1$ is induced by contact with at least one neighbor in state $1$. This Greenberg-Hastings automata on lattices has been frequently investigated in the literature as follows. Some rigorous statistical results has been proved for GHM with state space $\zed_3$ \cite{durrett}. For general state space $\zed_n$, experiments has been carried out in \cite{cca_in2d}, and specific features or patterns that would keep and ergodic behaviors has been studied in \cite{excitableCA,metastability,Durrett93asymptoticbehavior}. A few authors have considered what happens on a graph as opposed to a lattice \cite{Matamala1997161}; this is the starting point for the application in \cite{gha} to sensor networks. The main features and results of Baryshnikov-Coffman-Kwak include:
\begin{enumerate}
\item The CCA runs on a random graph instead of a lattice, where nodes represent sensors and edges communication links.
\item The network is completely non-localized and coordinate-free
\item The CCA with random initial conditions generates the familiar spiral-like \style{wavefronts} that sweep the whole domain with on-state sensors, giving a decentralized scheme for low-power dynamic barrier coverage.
\item Parameters such as wavelengths are controllable.
\item For planar domains with small obstacles, the wavefronts behave as if there are no obstacles at all (see Figure \ref{fig:ghm in square}): waves propagate through, making the problem of undersampling ignorable.
\end{enumerate}

The present paper begins where \cite{gha} ends, by investigating what happens when this protocol is adapted to an {\em indoor} network where the geometry and topology is not that of an open plane (perhaps dotted with obstacles), but rather a system of fairly narrow hallways connected with a non-trivial topology: a ``fat'' planar graph. The contributions of this paper include the following:
\begin{enumerate}
\item We observe and then prove that wavefronts propagate through the hallways, turning corners and branching off to side-corridors.
\item We lift the CCA dynamics from the network to the higher-dimensional simplicial complex the network bounds.
\item We detail a pursuit-evasion game within the domain and give sufficient conditions (in terms of the topological features of the system) for the pursuer to win.
\item We show how wavefronts have well-defined cohomology classes and prove a realization theorem for which cohomology classes can be attained by the system.
\item We identify what we believe is a novel type of {\em global} defect in CCA, generated by the topology of the domain as opposed to a local singularity. We show how such \style{pulse solutions} behave like solitons in the system.
\end{enumerate}

Most existing work on conserving energy for WSN focuses on distributed sleep-wake scheduling. For example, PEAS\cite{peas} provides a protocol by forcing a node who has an active neighbor to sleep for period according to exponential distribution. It is robust against node failure, however, could not guarantee, or measure the coverage with the rapid change of active sensors. The CDSWS\cite{cdsws} protocol uses a clustering technique to divide the sensors into multiple clusters, and selects a few sensors from each cluster to work, while maintaining nearly full coverage. ASCENT\cite{ascent} allows sensors to measure their connectivity in the network in order to activate their neighbors based on those measurements. But it never allows working sensors to go back to sleep again, which ends up consuming more energy as time goes by. Compared to those works, our protocol provides the ``user'' a chance to determine how much energy they would allow to be consumed, as balanced against how long it takes the system to detect evaders in the environment. The more energy it consumes, the less the expected time would be. Another advantage over the other protocols is it guarantees the failure of any evader following continuous path in the domain. Although our scheme requires synchronization ahead of time, and has not taken into account node and link failure yet, it provides a new approach to designing distributed sleep-wake WSN with energy constraints.

The outline of our paper is as follows.
\S\ref{sec:GHM} provides a network protocol, along with simulations and observations. In \S\ref{sec:asymptotic}, we first introduce the topological tools used later, then classify the asymptotic behavior of the system, and detail a necessary and sufficient condition for the system to not converge to an all-wake state. Degree as a time invariant is first introduced. We also formally define the ``evasion game'' at the end of this section. \S\ref{sec:limitcase} verifies the system on the one-dimensional limiting space defined by the hallways. For proofs of normal hallways case as in the simulations, please refer to \S\ref{sec:proof}, where we turn to topological tools. \S\ref{sec:seedoff} is a supplementary section, answering questions such as what the dynamics would be if there is no ``local defect'', by paring degree and cohomology. Next comes \S\ref{sec:link failure}, which briefly analyzes the system's feasibility even under link failures. Conclusion and comments are in \S\ref{sec:conclusion}.

\section{Greenberg-Hastings Model and Simulation}
\label{sec:GHM}

\subsection{Cellular Automata}
\label{subsec:CA}

A \style{cellular automata}, (CA), is a lattice-space and discrete-time dynamical system. Spatial coordinates are called \style{nodes}, and the dynamics generally take values in a finite alphabet $\Alphabet$, with $\Alphabet=\zed_2=\{0,1\}$ being the most common choice. The dynamics are local, in that the update rule for a node is a function of its state and the states of its spatial neighbors. For example, in $\zed^{2}$ lattice, the Von Neumann neighborhood of a node with coordinate $(i, j)$ is defined as the set of nodes attached to it, including itself, \ie, $\{(i-1, j), (i, j-1), (i+1,j), (i, j + 1), (i, j)\}$. An \style{initial state} (time $t = 0$) is selected by assigning a state for each node, typically at random. A new generation is created (advancing $t$ by $1$), according to some fixed rule universally that determines the new state of each node in terms of the current states of the node and its neighborhood. Typically, the updating rule is the same for each node and does not change over time, and is applied to the whole space simultaneously (but see asynchronous cellular automata \cite{gha} for one exception).

This paper focuses exclusively on \style{cyclic} cellular automata(CCA). The alphabet is defined to be $\Alphabet=\zed_{n} = \{0,1,\dots, n-1 \}$ under modular arithmetic. One denotes the (discrete) collection of nodes as $X$ and denotes a \style{state} at time $t$ as $u_t : X\rightarrow\zed_{n}$. The updating scheme for a general CCA is increments states in $\zed_n$, assuming that some excitation threshold is exceeded. More specifically, $u_{t+1}(x)=u_t(x)+1$ if certain criteria concerning the states of the immediate neighbors of $x$, $\Neighbor(x)$, are met. Such systems tend to cause periodic or cyclic behavior, spatially distributed and organizing into waves.

The \style{Greenberg-Hastings Model (GHM)} is a CCA first invented to study the spatial patterns in excitable media \cite{greenberg}. In this model, special significance is given to a single state (state 0), interpreted as an excitation state. The update rule for GHM is as follows:

\begin{displaymath}
u_{t+1}(x) =\left\{
\begin{array}{cll}
u_t(x) + 1 & : & u_t(x)\neq 0 \\
1 & : & u_t(x)=0 \, ;\, u_t(y)=1 \; {\mbox{ for some }} \;y\in\Neighbor(x) \\
0 & : & {\mbox{ else }}
\end{array}\right.
\end{displaymath}

This updating scheme is interpreted as the result of two mechanisms combined, excitation mechanism diffusion. It is therefore no surprise that there is a strong resemblance between the behavior of GHM on the plane and solutions to reaction-diffusion PDEs on planar domains \cite{pde_greenberg}, with both generating spiral-type waves. Given a fixed network, the states of the nodes will be uniquely determined by the initial state, because the GHM is a deterministic model. Denote by $\Update$ the evolution operator $\Update:\Statespace\to\Statespace$.

\subsection{Observations}
\label{subsec:observation}

Figure \ref{fig:experiment} illustrates the dynamics of the GHM on a specific indoor network. The network is built on \style{narrow hallways} modeled as a metric space with Euclidean metric; the neighborhood of a node is defined as nodes within distance $r$. We parameterize the system of 16250 nodes inside a $200\times200$ square with $n = 20$ and $r = 1.5$. The colors are representing states, with dark blue representing state 0. At time=0, it is in an unordered initial state. During the first several time steps, generally until time 25, the ratio of nodes with states 0 grows, as a result of the fact that states grow steadily until they reach 0 and wait for a stimulation from its neighborhood. At around time 45, spiral patterns become clearer visually, from top left, bottom and middle right. Those spiral ``seeds'' propagate waves along hallways. \style{Wavefronts}, consisting of the nodes with state 0, sweep through the domain, traveling ``intelligently,'' turning corners, etc. When wavefronts coming from different directions meet, they annihilate. And after enough steps (about 250), wavefronts have finally covered the whole space.

\begin{figure}
\centering
\begin{tabular}{cc}
\epsfig{file=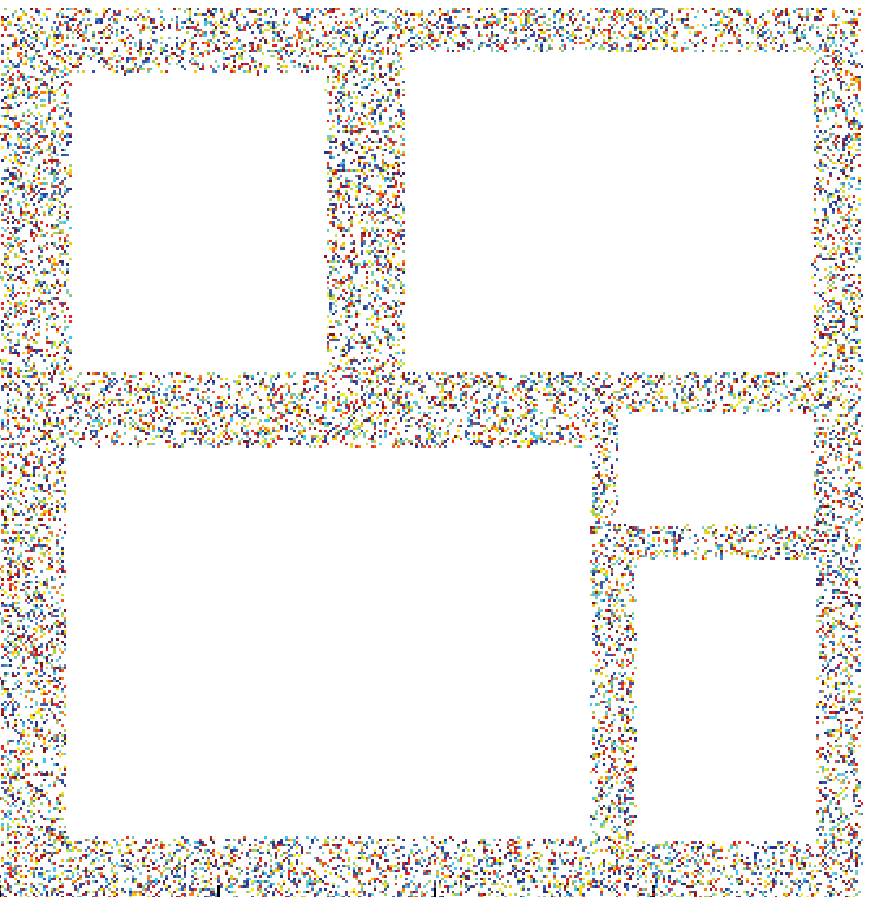,height=1.7in,width=1.8in} &
\epsfig{file=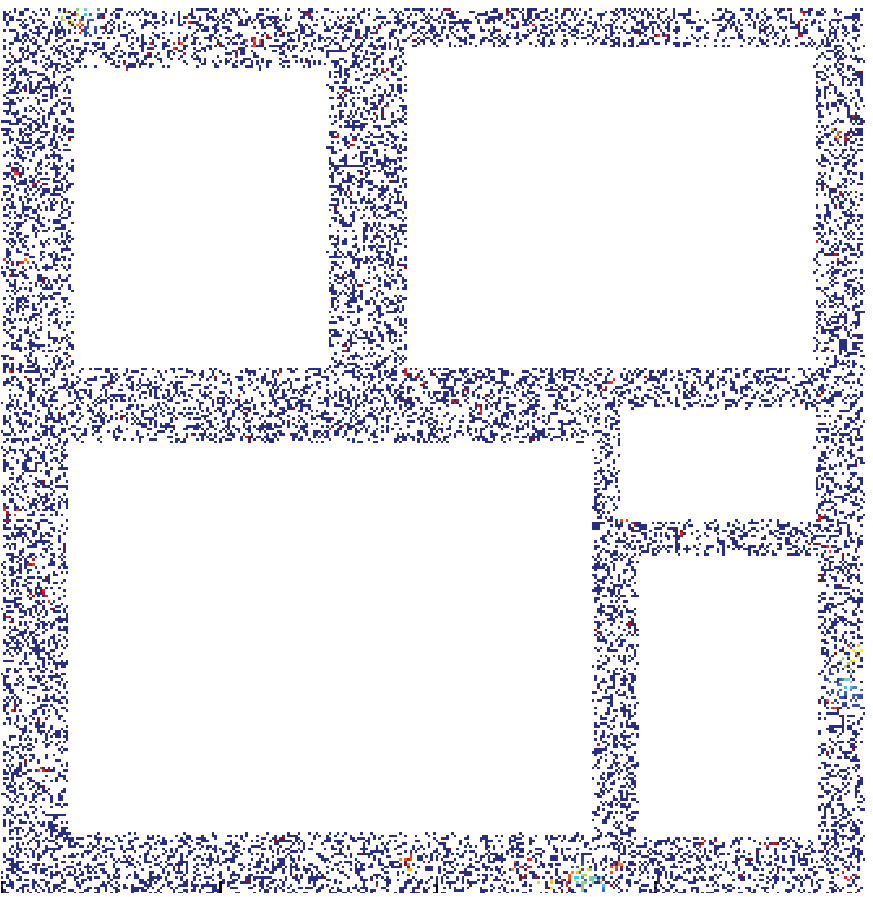,height=1.7in,width=1.8in} \\
\epsfig{file=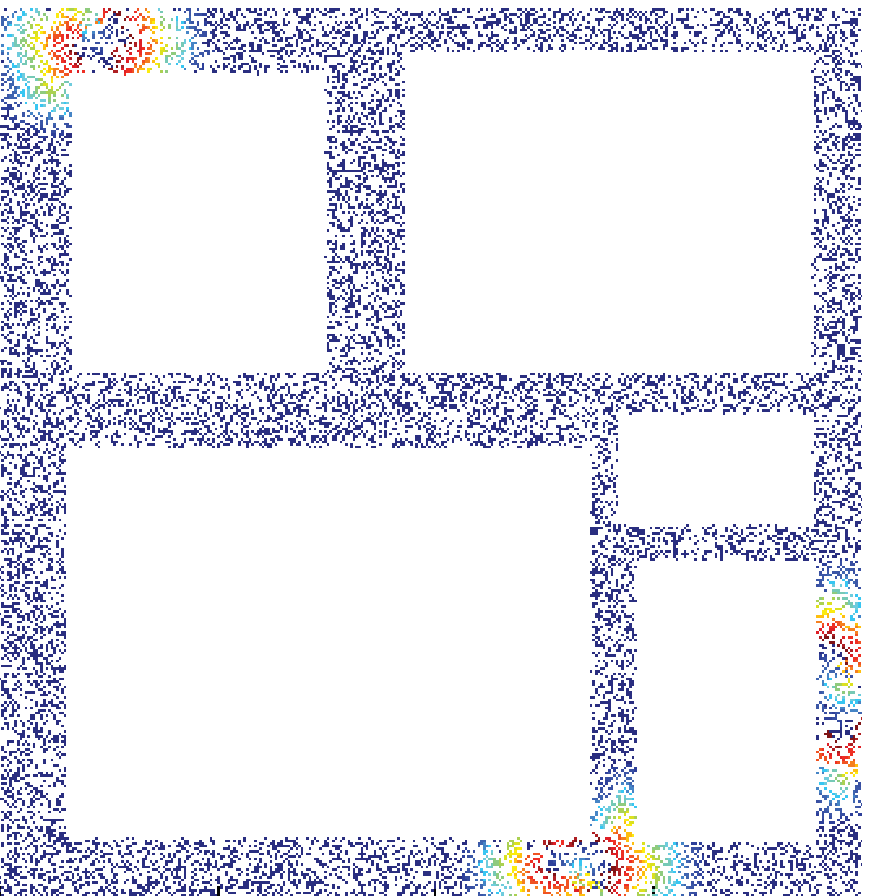,height=1.7in,width=1.8in} &
\epsfig{file=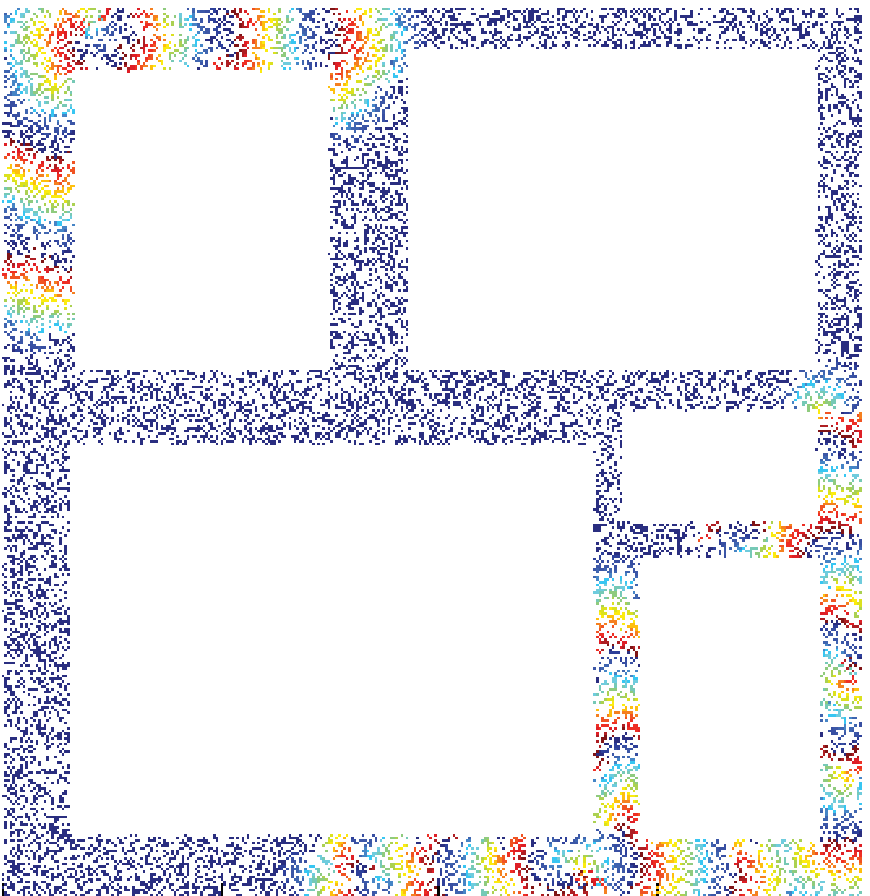,height=1.7in,width=1.8in} \\
\epsfig{file=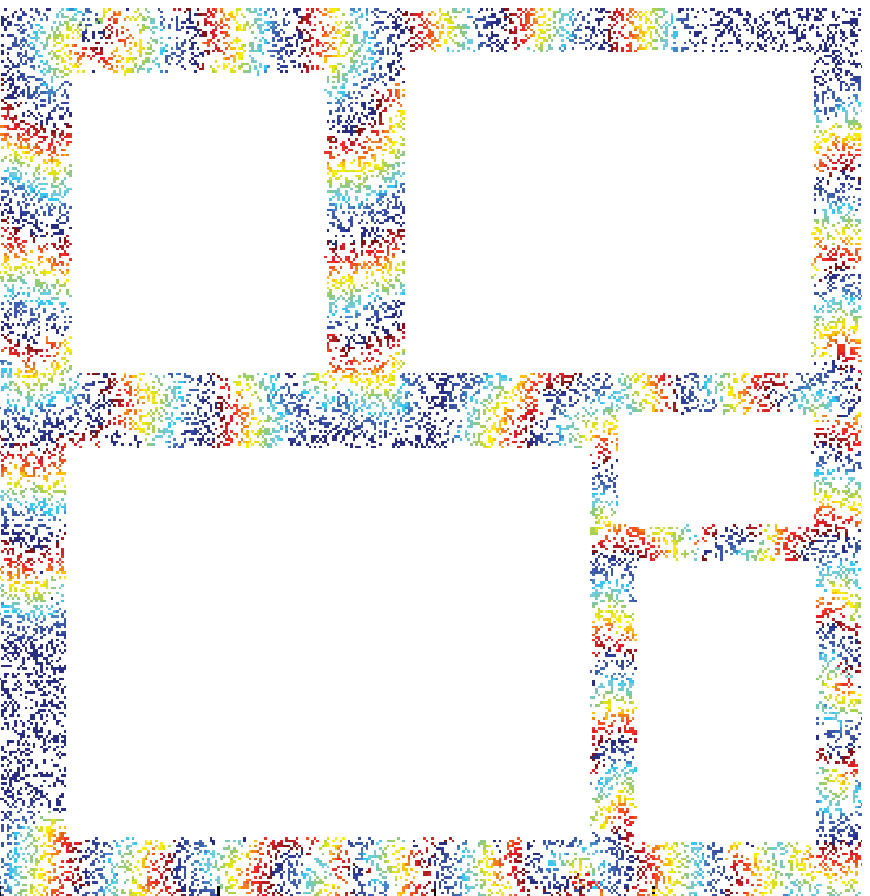,height=1.7in,width=1.8in} &
\epsfig{file=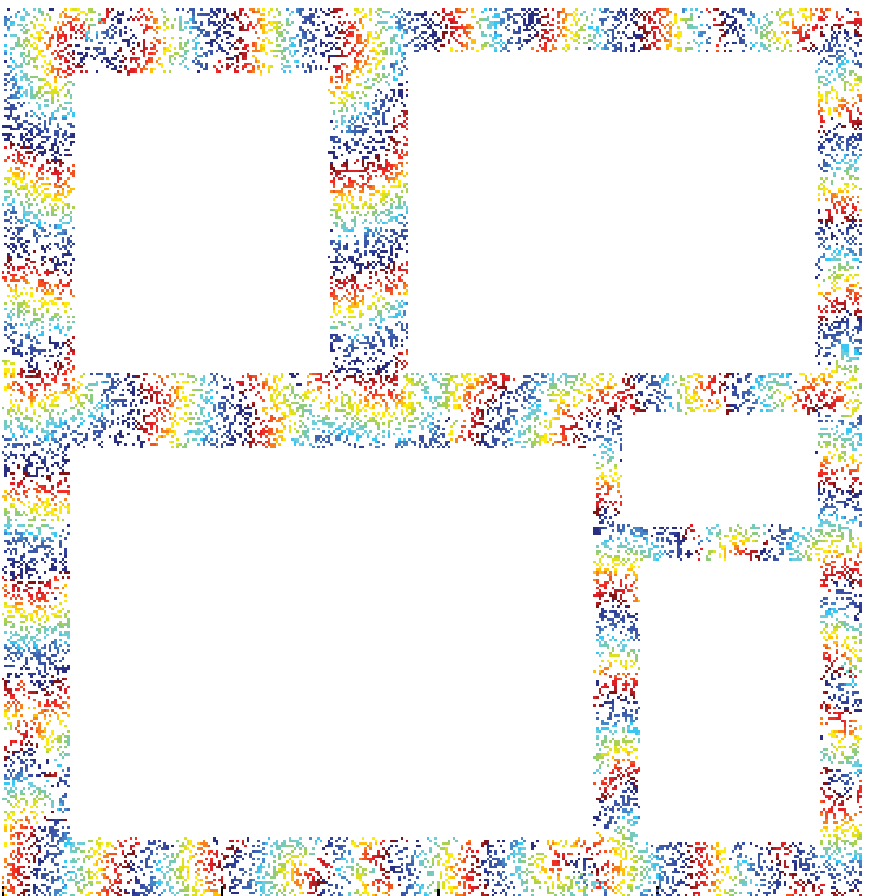,height=1.7in,width=1.8in} \\
\epsfig{file=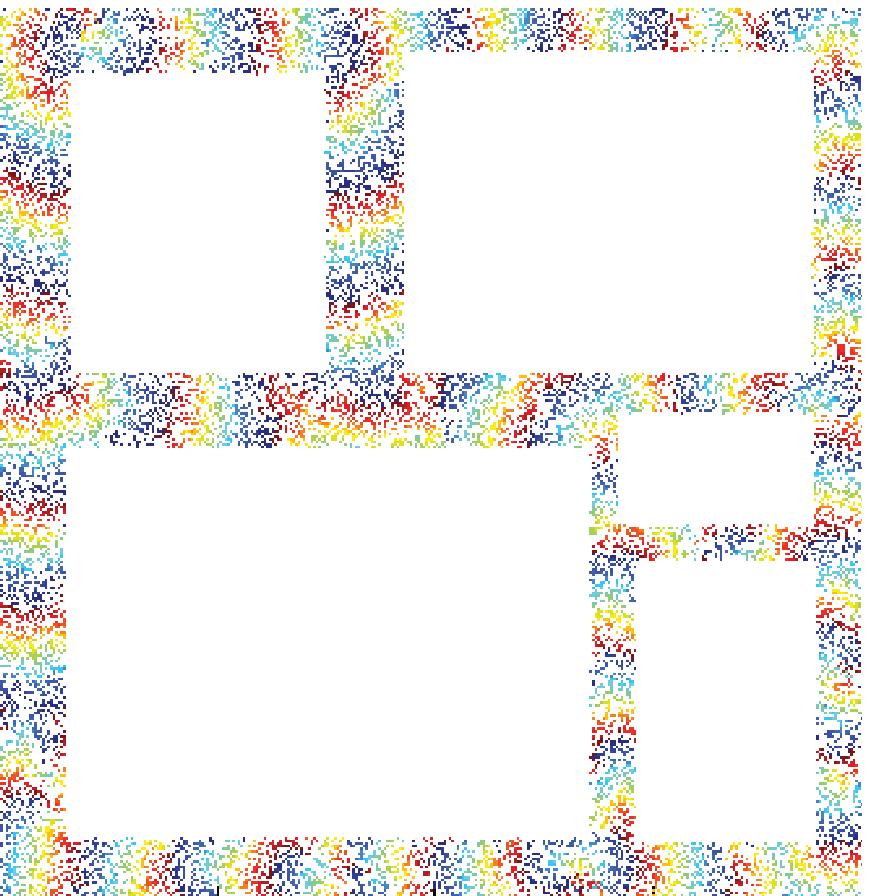,height=1.7in,width=1.8in} &
\epsfig{file=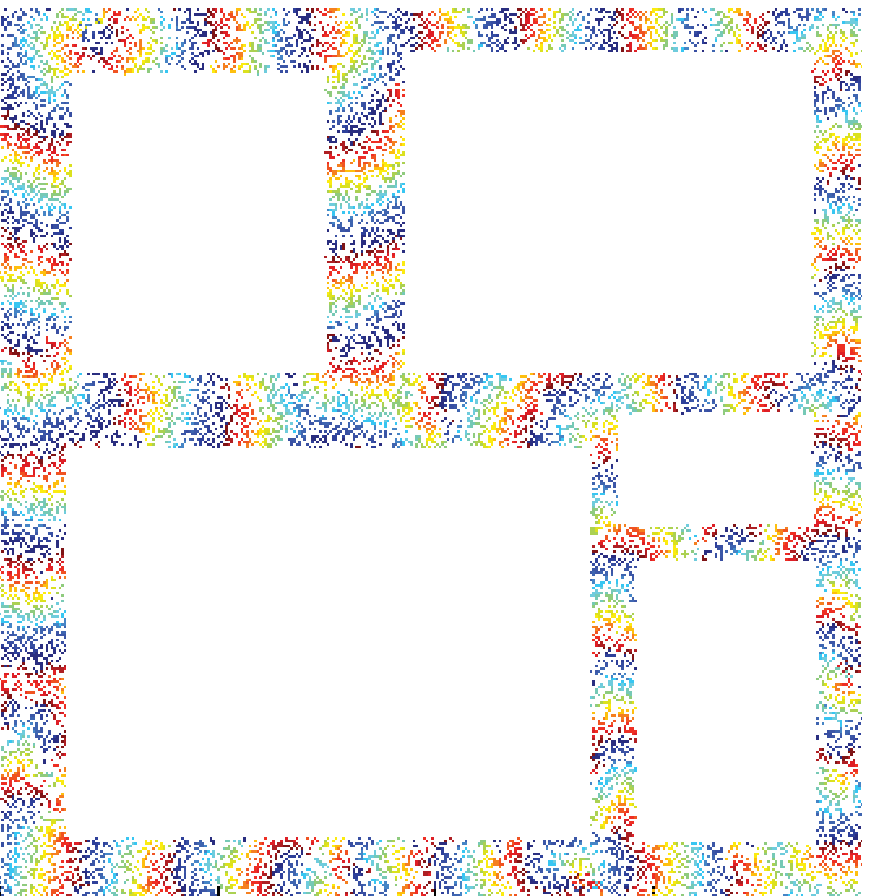,height=1.7in,width=1.8in}
\end{tabular}
\caption{Greenberg-Hastings Model for Narrow hallways space at time 0, 20, 45, 90, 150, 200, 250, 350.}
\label{fig:experiment}
\end{figure}

This protocol has some properties that make it ideal as a intruder detection sensor network.

\begin{enumerate}

\item The system has tunable energy efficiency. Set state $0$ to be the waking state, with all other states $2, 3,\dots, n-1$ denoting sleep mode. Those sleep-mode sensors are doing nothing but advancing their states by 1 for every time step. This could be done with very low energy consumption because they only have to follow clock clicks with no computing, sensing, or transmitting. Intrusion-detection is performed by the wake-state $0$ nodes. After a sufficient time lapse,  only a fraction (about $1/n$) of the total sensors will be in wake mode at any given time: the larger $n$, the less energy consumed.

\item The wave length is generally fixed no matter which source it is from, assuming the nodes are uniformly distributed: it appears to depend linearly on $n$. For bigger $n$, longer wave length is generated, but seeds are generated with smaller probability and longer generation time. This makes a trade-off between energy consumption and system success.

\item If we are given some specific sensors, say, with a big enough sensing radius $\epsilon$, then we will see that nodes in wavefronts form barriers, cutting the hallway into disconnected pieces. We note that the wavefronts efficiently sweep the corridors. Any intruder between two barriers has to follow the direction wavefronts propagate in order not to be detected immediately, but still is not able to survive to the end and will be detected by a coming wavefront in the opposite direction.

\end{enumerate}

\section{Topological tools and dynamical features}
\label{sec:asymptotic}

\subsection{Topological tools}

Our goal of a decentralized protocol for integrating local data into a comprehensive understanding of the global system points to algebraic topology as an appropriate and useful toolset. 
In order to construct a topological object based on the network that preserves local information, we consider the \style{flag complex} of the network graph. Recall, a \style{simplicial complex} is a union of simplices obtained by gluing them together along faces of same dimension \cite{hatcher}. The flag complex (also known as a \style{clique complex}) of a network is the maximal simplicial complex with the network graph as 1-skeleton.

\begin{definition}
Given an undirected network graph $G$, with vertex set $X$ and edge set $E$, the flag complex $C_f$, of $G$ is the abstract simplicial complex whose k-simplices correspond to unordered $(k+1)$-tuples of vertices in $V$ which are pairwise connected by edges in $E$.
\end{definition}

If metric information about the network is given, the \style{Vietoris-Rips complex} can also be built.

\begin{definition}
Given a set of points $X$ in a metric space and a fixed parameter $r >0$, the Vietoris-Rips complex of $X$, $\Rips_{r}(X)$, is the abstract simplicial complex whose $k$-simplices correspond to unordered (k + 1)-tuples of points in $X$ which are pairwise within distance $r$ of each other.
\end{definition}

Compared to a flag complex, the (Vietoris-)Rips complex requires metric information about the space. But if the network on a metric space is built with communication radius $r$, which means two nodes are neighbors if and only if they are with in distance $r$, then the flag complex and Rips complex built on this network are exactly the same.



\begin{definition}
\label{def:shadow}
For an abstract simplicial complex $C$ whose 0-simplices are located in a d-dimensional Euclidean space $\euc^d$, the shadow of $C$ in $\Domain$, $S(C)$, is the union of convex hulls of each simplex in $C$.
\end{definition}

\subsection{Dynamical features}

We reprove certain results from the CCA literature \cite{cca_in2d,durrett} in the more general setting of network (as opposed to lattice) systems. Our perspective is that a CCA is a discrete-time network-based dynamical system. From observation, the interesting dynamical features associated with the GHM are time-periodic. We therefore focus our efforts on understanding time-periodic states.

\begin{definition}
\label{def:periodic}
An \style{orbit} of a node $x\in X$ under GHM with an initial state $u_0$ is the time-sequence of states $(u_t(x))_{t=0}^\infty$. A node $x$ is said to be \style{$K$-periodic} if its orbit satisfies $u_{t+K}(x)=u_{t}(x)$ for some $K>0$ and all $t$. A node $x$ is said to be \style{eventually periodic} if its orbit satisfies $u_{t+K}(x)=u_{t}(x)$ for some $K>0$ and all sufficiently large $t$.
\end{definition}

\begin{definition}
A state $u$ on a subgraph $X'\subset X$ is \style{continuous} if for every pair of neighbors $x,y\in X'$, $|u(x)-u(y)|\leq 1$ (where, recall, all addition is in $\zed_n$).
\end{definition}

\begin{definition}
A node $x$ is \style{subordinate to} 
a neighbor $y$ at time $t$ if their states at that time satisfy $u_t(y) = u_t(x) + 1$ (where, recall, all addition is in $\zed_n$).
\end{definition}

\begin{lemma}
Subordinate nodes will remain continuous for all future time.
\end{lemma}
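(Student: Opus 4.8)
The plan is to prove a slightly stronger and cleaner statement from which the lemma follows immediately: that \emph{continuity of a pair of neighbors is preserved by the evolution operator $\Update$}. A subordinate pair $x,y$, satisfying $u_t(y)=u_t(x)+1$, is in particular continuous at time $t$ (the difference is exactly $1$). Since $\Update$ is deterministic, iterating the preservation property forward then yields $|u_s(x)-u_s(y)|\le 1$ for every $s\ge t$, which is precisely the assertion. So the lemma reduces to an inductive step together with a trivial base case.

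The observation driving the inductive step is that the update rule acts almost uniformly on the alphabet: a node in any state $k\neq 0$ is sent deterministically to $k+1$, and the only state whose image depends on the neighborhood is state $0$, whose image is in every case confined to $\{0,1\}$. Consequently the neighbors' configuration can affect the difference $u_{t+1}(x)-u_{t+1}(y)$ only when one of the two nodes sits at $0$, which is what makes the case analysis short.

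I would then fix neighbors $x,y$ with $u_t(x)-u_t(y)\in\{0,\pm1\}$ in $\zed_{n}$ and split according to how many of them are in state $0$. If neither is at $0$, both simply increment and the difference is unchanged. If both are at $0$, their images both lie in $\{0,1\}$ and so are automatically within cyclic distance $1$. The only case needing care is when exactly one, say $x$, is at $0$; continuity then forces the other to lie in $\{1,n-1\}$. If that neighbor is in state $n-1$ it advances to $0$ while $x$ moves to $0$ or $1$, keeping the pair within distance $1$; if it is in state $1$, one invokes the excitation clause of the rule, since $y\in\Neighbor(x)$ is then a neighbor of $x$ in state $1$ and hence forces $x$ up to $1$, so the two advance in lockstep.

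I expect the only genuine subtlety to be this last subcase, where preservation rests on the diffusion/excitation mechanism rather than on bare arithmetic: it is exactly the presence of the state-$1$ leader $y$ in $\Neighbor(x)$ that stops $x$ from lagging and tearing continuity apart. The same analysis also shows why the lemma is phrased in terms of continuity rather than subordination, as the relation $u(y)=u(x)+1$ itself can degrade to equality when the leader wraps from $n-1$ to $0$ without being re-excited. Everything else is routine modular bookkeeping in $\zed_{n}$, and the induction on time closes the argument.
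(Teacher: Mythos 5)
Your proof is correct and follows essentially the same route as the paper's: reduce to a single pair of neighbors, prove one-step preservation of $|u_t(x)-u_t(y)|\leq 1$ under $\Update$, and close by induction, with the critical case (one node at state $0$, the other at state $1$) resolved by the excitation clause exactly as in the paper. The only difference is bookkeeping—the paper organizes the cases via the set $S$ of nodes fixed by the update and tracks the increment of the difference, while you enumerate directly by which of the two nodes sits at state $0$—and your closing remark that subordination itself is not forward-invariant (only continuity is) correctly explains why the lemma is phrased the way it is.
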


\begin{proof}
It suffices to assume a subgraph consisting of a single edge with nodes $x$ and $y$. Assume that $|u_t(x)-u_t(y)|\leq 1$. Consider the set $S=\{z\in X| u_{t+1}(z)=u_t(z)\}\subset u_t^{-1}(0)$. Depending on membership in $S$,
\begin{equation}
\label{eq:update diff cases}
\left(u_{t+1}(x) - u_{t+1}(y)\right)-\left(u_t(x) - u_t(y)\right)=
\begin{cases}
0& \text{$x,y\in S$ or $x,y\notin S$}\\
1& \text{$x\notin S$ and $y\in S$}\\
-1& \text{$x\in S$ and $y\notin S$}
\end{cases}
\end{equation}

By continuity, $|u_t(x)-u_t(y)|\leq 1$, so $|u_{t+1}(x)-u_{t+1}(y)|$ will exceed $1$ only if $u_t(x)-u_t(y)=1,((\Update(u))(x)-(\Update(u))(y))-(u(x)-u(y))=1$ or $u(x)-u(y)=-1,((\Update(u))(x)-(\Update(u))(y))-(u(x)-u(y))=-1$. The first case is equivalent to $u(x)=1, u(y)=0$ and $x\notin S, y\in S$, which is impossible because $y$ has neighbor $x$ in state $1$, and will not stay in state $0$ for the next step, thus not in $S$; the second case is the symmetric case which by the same argument is not possible either. Then $|(\Update(u))(x)-(\Update(u))(y))|\leq 1$, which makes $\Update(u)$ also continuous.
\end{proof}

\begin{corollary}
\label{cor:subordination hold}
If a node $x$ is subordinate to a neighbor $y$ that is $n$-periodic at time $t$, then node $x$ is $n$-periodic for all future time. Any node subordinate to an eventually periodic node is eventually periodic.
\end{corollary}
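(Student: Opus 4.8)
The plan is to show that the \emph{subordination relation itself} is preserved by the update whenever the dominating node $y$ advances its state, and then to observe that an $n$-periodic node that genuinely cycles through $\zed_n$ advances at every single step. Combining these, $x$ stays locked exactly one increment behind $y$ for all future time, and so inherits $y$'s period verbatim.

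First I would record the key claim: if $x$ is subordinate to $y$ at time $t$ and $y$ advances (that is, $y\notin S$ in the notation of the preceding lemma, where $S$ collects the nodes whose state is unchanged), then $x$ is subordinate to $y$ at time $t+1$. I would prove this by the same trichotomy used in the lemma. If $u_t(x)\neq 0$, then $x$ also advances and both states increment by one, preserving the gap of $1$. If $u_t(x)=0$, then subordination forces $u_t(y)=1$, so $y$ is a neighbor of $x$ in state $1$; hence $x$ is excited to state $1$ while $y$ advances to $2$, and the gap is restored. The delicate case is $u_t(y)=0$, which forces $u_t(x)=n-1$: here $x$ advances to $0$, and because we assumed $y$ advances (i.e. $y$ is excited), $y$ moves to $1$, so again $u_{t+1}(y)=u_{t+1}(x)+1$.

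Next I would close the argument for the first sentence. An $n$-periodic node whose orbit meets every state must, within each window of length $n$, pass through each of the $n$ states exactly once; in particular it is never in state $0$ on two consecutive steps, so it advances at every time step and never lies in $S$. Applying the key claim inductively from time $t$ onward then yields $u_s(x)=u_s(y)-1$ for all $s\geq t$, whence $u_{s+n}(x)=u_{s+n}(y)-1=u_s(y)-1=u_s(x)$, so $x$ is $n$-periodic for all future time. For the second sentence I would invoke eventual periodicity to select a time $T$ past which $y$ sits in its (advancing) periodic orbit, and then run the identical tracking argument on this tail, where $y$ advances at every step and subordination persists.

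The main obstacle I anticipate is precisely the state-$0$ case of the key claim, which is exactly where the hypothesis on $y$ does the work: subordination can be destroyed only when $y$ rests at $0$ and fails to be excited (a \emph{stutter}), and ruling this out is the content of $y$ advancing at every step. I would also flag one point of care in the statement itself: a node frozen at $0$ is technically $n$-periodic yet never advances, so the intended hypothesis is that $y$ traverses all of $\zed_n$ (minimal period $n$). Under that reading the argument above is airtight, and the degenerate frozen case is vacuous for the wavefront dynamics of interest.
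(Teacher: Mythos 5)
Your proof of the first sentence is correct and is essentially the paper's own argument in a cleaner packaging: the paper fixes the first time $t_0$ at which $x$ returns to $0$, asserts $u_{t_0}(y)=1$ ``by the scheme of $\Update$,'' and then inducts over windows of length $n$ ($x$ is re-excited by $y$ exactly when it returns to $0$); your key claim --- subordination is preserved at any step where $y$ advances --- is precisely what that assertion and that induction use, applied step by step instead of in blocks of $n$. Your caveat about the frozen-at-$0$ reading of ``$n$-periodic'' is also well taken: for a frozen $y$ the paper's assertion $u_{t_0}(y)=1$ is simply false, so both proofs need the ``$y$ advances at every step'' reading, and your dichotomy (an $n$-periodic orbit is either frozen at $0$ or advances at every step) is the right justification, since the number of advances in any window of length $n$ must be $0$ or $n$.

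The gap is in your treatment of the second sentence. The hypothesis gives subordination only at the original time $t$, while your key claim preserves subordination only at steps where $y$ advances. If you choose $T\geq t$ past which $y$ is periodic, then on the interval between $t$ and $T$ the node $y$ is not yet periodic and may stutter at $0$; at such a step subordination (which forces $u(x)=n-1$, $u(y)=0$) collapses to equality ($u(x)=u(y)=0$) and is destroyed. So ``run the identical tracking argument on this tail, where \dots subordination persists'' is unjustified: you have no subordinate pair at any time $\geq T$ to start from. A second, related hole is the parenthetical ``(advancing)'': an eventually periodic orbit need not advance at every step --- it can be eventually frozen at $0$, or periodic with stutters (e.g.\ period $2n$: excited, run once through $\zed_n$, then wait $n$ steps at $0$) --- and in the stuttering case your induction breaks at every stutter even inside the periodic tail. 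To be fair, the paper's proof silently establishes only the first sentence and says nothing about the second (which, on a finite network, is trivially true for every node, since the deterministic dynamics on a finite state space makes the entire system eventually periodic); but as written your reduction of the second sentence to the first does not go through, and a repair would have to address how subordination is re-established after both nodes sit at $0$ (when $y$ is next excited, $x$, if still at $0$, is excited one step later by $y$ itself), or else restrict the claim to nodes whose eventual periodic orbit advances at every step.
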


\begin{proof}
Suppose $x$ reaches $0$ for the first time (after $t$) at time $t_0$. By the scheme of $\Update$, $u_{t_{0}}(y)=1$. Therefore, all we need to prove is for any non-negative integer $k$, $u_{t_{0}+kn+1}(x)=1$. We have already proved $u_{t_{0}+1}(x)=1$ because it has a neighbor $y$ in state $1$ at that moment. Suppose the statement holds for a particular $k_0$, \ie, $u_{t_{0}+k_0n+1}(x)=1$, then $u_{t_{0}+(k_0+1)n}(x)=0$. But by periodicity, $u_{t_{0}+(k_0+1)n}(y)=1$, thus $u_{t_{0}+(k_0+1)n+1}(x)=1$, which makes the statement hold at $k_0+1$. By induction, the statement holds for all $k$.
\end{proof}

\begin{corollary}
\label{cor:continuity holds}
 Continuity is \style{forward-invariant}: continuous states remains continuous in time.
\end{corollary}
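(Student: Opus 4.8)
The plan is to derive this immediately from the preceding Lemma by exploiting the fact that continuity is a purely local condition: a state $u$ is continuous precisely when every edge $\{x,y\}$ of the network satisfies $|u(x)-u(y)|\leq 1$. The preceding Lemma, although phrased in terms of subordinate nodes, actually establishes exactly the edge-wise claim I need — that on any single edge with $|u_t(x)-u_t(y)|\leq 1$, one has $|u_{t+1}(x)-u_{t+1}(y)|\leq 1$. Thus the corollary should fall out of quantifying that claim over all edges of the network simultaneously.

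Concretely, first I would observe that if $u_t$ is continuous, then the hypothesis $|u_t(x)-u_t(y)|\leq 1$ of the Lemma holds on every edge at once. Applying the edge-wise conclusion to each edge in turn shows that $|u_{t+1}(x)-u_{t+1}(y)|\leq 1$ for every edge, that is, that $\Update(u_t)=u_{t+1}$ is again continuous. A one-line induction on $t$ then upgrades preservation under a single step to forward-invariance for all future time, giving the statement.

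The one point that needs care — and the only place I anticipate an obstacle — is justifying the reduction to a single edge. The set $S=\{z : u_{t+1}(z)=u_t(z)\}$ appearing in the Lemma is defined using each node's full neighborhood $\Neighbor(z)$, not merely the edge under inspection, so a priori the edge-wise argument might depend on data lying outside the edge. The resolution is that continuity on an edge can only be destroyed in the configuration $u_t(x)=1,\,u_t(y)=0$, which would require $x\notin S$ and $y\in S$; but the state-$1$ endpoint $x$ is itself a neighbor of $y$, so by the GHM update rule $y$ must leave state $0$ and hence $y\notin S$. This is a sufficient condition witnessed entirely by the edge $\{x,y\}$ itself, and the presence of any additional neighbors of $y$ can only reinforce $y\notin S$, never reverse it. Hence the single-edge analysis is legitimate for the full network, and the corollary follows.
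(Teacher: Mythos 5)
Your proposal is correct and follows essentially the same route as the paper: both reduce forward-invariance of continuity to an edge-by-edge check and invoke the case analysis (via the set $S$) in the proof of the preceding Lemma, then iterate over time steps. Your additional care in justifying the single-edge reduction — noting that membership in $S$ depends on the full neighborhood but that extra neighbors can only force a state-$0$ node \emph{out} of $S$, so the dangerous configuration is excluded by edge-local data alone — makes explicit a point the paper's proof leaves implicit.
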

\begin{proof}
According to Corollary \ref{cor:subordination hold}, two neighbors that are subordination will remain continuous. For one step forward, two neighbors that are of the same state will either remain the same state, or be offset by state 1, which means subordination, thus also continuous.
\end{proof}

However, it is {\em not} necessarily the case that all initial conditions converge to a continuous state (even in a connected compact network). See, for example, Figure \ref{fig:non-continuous state}: every node has period $n$, and the two nodes on the right end have states always differed by $3$. Thus this is never a continuous network.
\begin{figure}
\centering
\epsfig{file=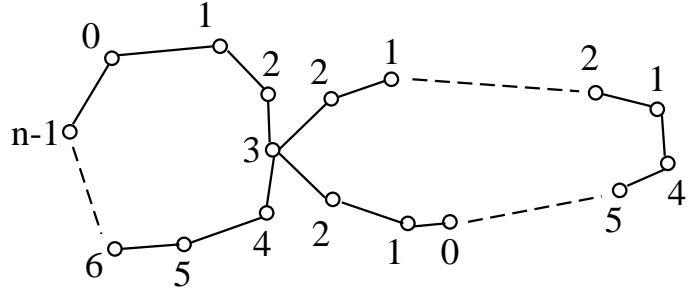}
\caption{Counter example: non-continuous state for all time.}
\label{fig:non-continuous state}
\end{figure}

The following definition is a network-theoretic version of the lattice-based analogue from, \eg, \cite{cca_in2d}.
\begin{definition}
\label{def:cycle}
We call a formal linear combination $\alpha$ of edges $\alpha_i=[a_i, b_i], i=1,\dots,K$ a \style{cycle} if the boundary of $\alpha$, $\partial\alpha=\sum_{i=1}^{K}(b_i-a_i)$ is 0. A cycle is called a \style{loop} if $b_i=a_i+1$ for $i=1,\dots,K-1$, and $b_K=a_1$.
\end{definition}

As a remark, a loop is a cycle, and a cycle is the sum of one or more loops. We also remark that the set of cycles $Z$ has the structure of an abelian group: one can add cycles and scale them by (integer) coefficients.

\begin{definition}
\label{def:seed}
A state $u:X\to\zed_n$ has a \style{seed} if there is a loop $\sum_{i=0}^{K-1}[x_i,x_{i+1}]$ ($x_0=x_K$), for which $u(x_i)=i \mod n$.
\end{definition}

\begin{figure}
\centering
\begin{tabular}{cc}
\epsfig{file=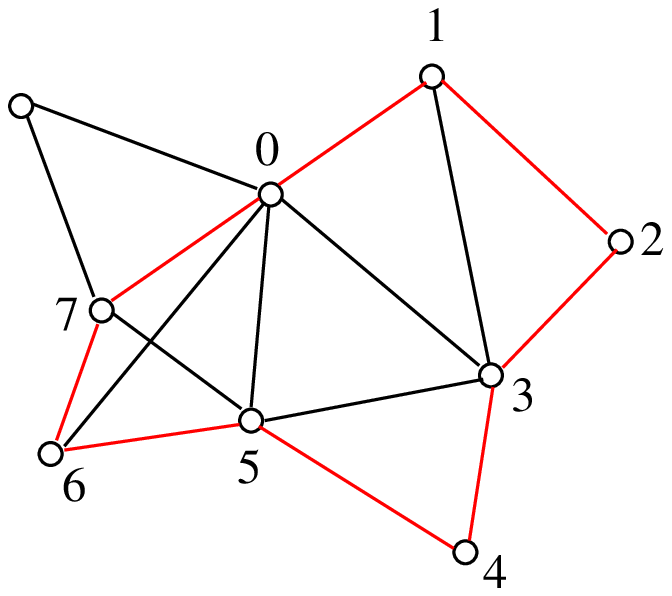,height=2in,width=2in} &
\epsfig{file=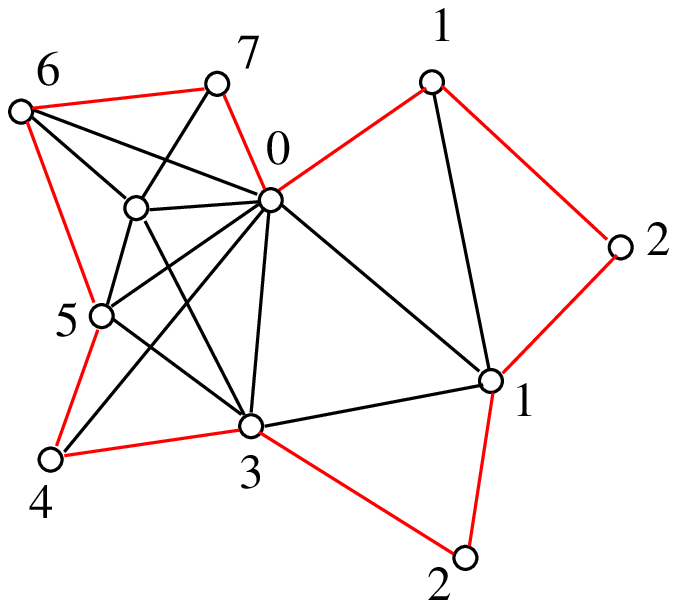,height=2in,width=2in}
\end{tabular}
\caption{A seed (left) and a defect (right) for $n=8$ on cycles in light red.}
\label{fig:seed and defect}
\end{figure}

By definition, the length $K$ of a loop that makes a seed has to be a nonzero multiple of $n$, because $u(x_0)=u(x_K), K=0 \mod n$.
Every node on a seed has period $n$, because it will have a neighbor of state $1$ on the seed when it reaches state $0$.

\begin{lemma}
If an initial condition $u_0$ on a connected compact network $X$ contains at least one seed, then all nodes are eventually periodic.
\end{lemma}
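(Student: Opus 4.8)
The plan is to take the one piece of periodicity the hypothesis hands us and let it spread across the whole network. Immediately before the statement it is noted that \emph{every node lying on a seed has period $n$}, since such a node always meets a neighbor in state $1$ on the seed exactly when it returns to state $0$. Thus a seed supplies a nonempty set of $n$-periodic nodes, and the problem reduces to showing that periodicity is contagious along edges: if every node within graph-distance $d$ of the seed is eventually periodic, then so is every node at distance $d+1$. Because $X$ is connected, induction on graph-distance from the seed then reaches every node. So the whole weight of the argument falls on a single claim: \emph{any neighbor $x$ of an eventually $n$-periodic node $y$ is itself eventually periodic.}

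The engine for this claim is Corollary \ref{cor:subordination hold}, which upgrades subordination to a periodic neighbor into eventual periodicity; hence it suffices to produce one time at which $x$ is subordinate to $y$. First I would observe that $x$ cannot sit in state $0$ forever: the node $y$ returns to state $1$ infinitely often, and at any time $t$ for which $u_t(x)=0$ and $u_t(y)=1$ the rule $\Update$ excites $x$, giving $u_{t+1}(x)=1$ and $u_{t+1}(y)=2=u_{t+1}(x)+1$, i.e. subordination. So everything hinges on forcing a coincidence of ``$x$ in state $0$ while $y$ is in state $1$''. The useful structural fact here is that once $x$ is excited, the first clause of the update rule drives it deterministically through $1,2,\dots,n-1$ and back to $0$ in exactly $n$ steps, regardless of its neighbors; so the orbit of $x$ decomposes into these length-$n$ active passages separated by (possibly empty) spells idling in state $0$.

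The main obstacle is that $x$ may be excited \emph{early}, by some other neighbor, at the end of every idle spell, so that it always leaves state $0$ just before $y$ reaches state $1$. I would dispose of this with a phase-sweep argument. If $x$ ever idles in state $0$ for $n$ consecutive steps, then the period-$n$ node $y$ passes through state $1$ during that spell and the coincidence appears. Otherwise every idle spell is shorter than $n$, and one checks that the next idle spell begins at exactly the residue modulo $n$ at which the previous one ended; hence the residues occupied by successive idle spells sweep contiguously forward around $\zed_n$. This forward sweep must eventually cover the fixed residue at which $y$ visits state $1$ — producing the desired coincidence — unless the sweep freezes, meaning all idle spells are eventually empty, in which case $x$ is re-excited every $n$ steps and is therefore itself exactly $n$-periodic. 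Either way $x$ is eventually periodic, and Corollary \ref{cor:subordination hold} (together with the induction on distance) completes the proof.

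I expect the genuine difficulty to lie entirely in this phase bookkeeping of the inductive step rather than in the connectivity induction, which is routine. I would also remark, as a safety net, that when $X$ is finite the configuration orbit lives in the finite set $\Statespace=\zed_n^X$, on which the deterministic map $\Update$ is automatically eventually periodic; the content carried by the seed is then that the common period is the nontrivial value $n$, so the system genuinely oscillates rather than decaying to the all-wake fixed point.
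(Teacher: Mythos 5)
Your proof is correct and takes essentially the same route as the paper's: both begin with the seed nodes cycling with period $n$, propagate periodicity outward to neighbors (your induction on graph distance plays the role of the paper's growing set $P_t$ of periodic nodes, finished off by connectedness and compactness), and both rest on the same key mechanism --- a neighbor $x$ of a cycling node $y$ either eventually stops stalling in state $0$ (and is then itself $n$-periodic) or drifts in phase by one unit per stall until the configuration $u_t(x)=0$, $u_t(y)=1$ occurs, at which point Corollary \ref{cor:subordination hold} applies. Your ``idle-spell residue sweep'' is just a rephrasing of the paper's observation that the difference $u_t(y)-u_t(x)$ increases by $1$ at each stall until it reaches $1 \bmod n$.
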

\begin{proof}
Let the set of $n$-periodic nodes in $X$ be $P_t$, and state at time $t$ be $u_t$. Suppose the loop $\sum_{i=0}^{K-1}[x_i,x_{i+1}], x_0=x_K$ makes a seed in initial condition, then $P_0$ is nonempty, with $(x_i)_0^{k-1}$ as a subset. $P_t$ is non-descending with respect to time $t$, $P_0\subset P_1\subset\dots\subset P_t\subset P_{t+1}\subset\dots$. For a node $x$ that is not $n$-periodic that has at least one neighbor that is $n$-periodic at time $t_0$, if $x$ never gets to be $n$-periodic, it means for any $t$ positive, there exists some $s\geq t$ such that $u_{s+1}(x)\neq u_s(x)+1 \mod n$. It will induce that $u_{s+1}(x)=u_s(x)=0$, which is saying $x$ gets to stay in state 0 for a while from time to time. But the neighbors of $x$ that are periodic $n$ are advancing their states by 1 at every time step, this will make the face difference between them and $x$ bigger and bigger until it reaches $1 \mod n$. When the such a offset by 1 appears, a subordination between $x$ and its periodic $n$ neighbor is built up, which makes $x$ periodic ever since as a result of corollary \ref{cor:subordination hold}. Therefore every node which as at least one neighbor that is periodic $n$ will be periodic $n$ after a finite amount of time (no longer than $n$). By the above argument and the fact that the network is connected, for any $t$, $P_t\subsetneq P_{t+n}$. On the other hand side, since $X$ is compact, there exist a time $T$, such that
\begin{displaymath}
\bigcup_{t=0}^{\infty}P_t=P_T
\end{displaymath}
Therefore, the whole system is in $n$-periodic state since time $T$.
\end{proof}

We see in the above arguments that a loop that makes a seed at one moment will support a seed forever with the dynamics. The key feature that is invariant under the dynamics is the concept of ``winding number'', which records how many rounds it goes through while chasing continuously on a loop. We will define this as degree and extend the concept to all cycles.

\begin{definition}
\label{def:degree}
For a given network $X$ and a state $u\in\Statespace$, if $u$ is continuous on a cycle $\alpha=\sum_{i=1}^{K}[a_i,b_i]$, then the \style{degree} of $u$ on this cycle is defined as
\begin{displaymath}
\degree(u,\alpha):=1/n\sum_{i=0}^{k-1}(u(b_i)-u(a_i))
\end{displaymath}
where the summands are forced to be $-1$, $0$, or $1$, and the sum is ordinary addition (not $\mod n$).
\end{definition}

\begin{definition}
\label{def:defect}
We call a cycle $\alpha=\sum_{i=1}^{K}[a_i,b_i]$ in the network $X$ a \style{defect} for some state $u\in\Statespace$ if the degree of $u$ on this cycle is nonzero.
\end{definition}

An example of a defect is as in Figure \ref{fig:seed and defect}. The concept of a defect is a generalization of a seed, in the sense that it has nonzero degree. The term ``degree'' defined here is consistant with the use of degree in topology, which is a homotopy invariant \cite{hatcher}. Here, it is the discrete version of ``winding number'' for continuous self-maps of the circle $S^1$ \cite{durrett}, describing how many times it wraps around with direction. Similar to Lemma 5 in \cite{durrett}, we will prove the $\real^2$ version instead of the lattice $\zed^2$ version, presenting a necessary and sufficient condition for a continuous system not dying out.

\begin{lemma}
\label{lem:degree of summation}
For two cycles $\alpha$ and $\beta$, if a state $u$ is continuous on both cycles, then it is also continuous on their sum $\alpha+\beta$, and $\degree(u,\alpha+\beta)=\degree(u,\alpha)+\degree(u,\beta)$.
\end{lemma}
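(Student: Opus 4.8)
The plan is to recognize the degree as a linear pairing between cycles and a single fixed integer-valued $1$-cochain determined by $u$, after which additivity is immediate from the bilinearity of that pairing.

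First I would dispose of the continuity claim, which holds essentially for free. The support of $\alpha+\beta$ (the set of edges occurring with nonzero coefficient) is contained in the union of the supports of $\alpha$ and $\beta$, since forming the formal sum can only combine or cancel coefficients of edges already present. Because continuity in the sense of the definition is a condition imposed edge-by-edge on neighboring pairs, and $u$ is continuous on every edge of $\alpha$ and of $\beta$ by hypothesis, $u$ is automatically continuous on every edge of $\alpha+\beta$. Hence $\degree(u,\alpha+\beta)$ is well-defined and the quantity we must compute makes sense.

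Next I would turn the ``forcing'' in Definition \ref{def:degree} into an honest antisymmetric function on oriented edges. For an oriented edge $e=[a,b]$ on which $u$ is continuous, let $\delta_u(e)$ be the unique representative of $u(b)-u(a)$ in $\{-1,0,1\}\subset\zed$; this exists and is unique provided $n\geq 3$, since continuity forces $u(b)-u(a)\in\{-1,0,1\}\pmod n$ and these three residues are then distinct. The crucial features are that $\delta_u(e)$ depends only on $u$ and on $e$ itself, not on which cycle we regard $e$ as belonging to, and that it is antisymmetric, $\delta_u([b,a])=-\delta_u([a,b])$, matching the sign convention for oriented edges in the chain group $Z$. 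With this, I can write $\degree(u,\gamma)=\tfrac{1}{n}\langle\gamma,\delta_u\rangle$ for every cycle $\gamma$ on which $u$ is continuous, where $\langle\,\cdot\,,\,\cdot\,\rangle$ denotes evaluation of the $1$-cochain $\delta_u$ on the $1$-chain $\gamma$. Additivity then drops out: writing $\alpha=\sum_e\alpha_e\,e$ and $\beta=\sum_e\beta_e\,e$, one has $\alpha+\beta=\sum_e(\alpha_e+\beta_e)\,e$, so $\degree(u,\alpha+\beta)=\tfrac{1}{n}\sum_e(\alpha_e+\beta_e)\delta_u(e)=\degree(u,\alpha)+\degree(u,\beta)$.

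The step I expect to demand the most care is the well-definedness of $\delta_u$ on edges shared by $\alpha$ and $\beta$: I must confirm that the integer lift is the same whether an edge is encountered inside $\alpha$ or inside $\beta$, and that opposite orientations contribute opposite signs so that any genuine cancellation in $\alpha+\beta$ is correctly reflected. This is precisely where continuity on \emph{both} cycles, together with the hypothesis $n\geq 3$, is used — continuity guarantees each lift lands in $\{-1,0,1\}$, while distinctness of these residues modulo $n$ guarantees the lift is canonical and hence independent of context. Once this consistency is in place, no genuine obstacle remains, as everything else is just linearity of the fixed pairing.
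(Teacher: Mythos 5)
Your proof is correct, and at its core it runs on the same engine as the paper's: additivity of the defining sum over edges. The paper's proof is a two-line computation that writes $\degree(u,\alpha+\beta)$ as the concatenation of the two edge sums and splits it; yours repackages the same linearity as bilinearity of a pairing $\langle\gamma,\delta_u\rangle$ between chains and a fixed integer-valued $1$-cochain. What your packaging buys is precisely the points the paper's computation glosses over. First, the paper never proves the continuity assertion that is part of the lemma's statement; your support-containment argument disposes of it. Second, the paper silently identifies the chain $\alpha+\beta$ with the concatenated edge list of $\alpha$ and $\beta$, which is only legitimate if the degree is well-defined on chains rather than on chosen edge lists — exactly the cancellation issue when $\alpha$ and $\beta$ share an edge with opposite orientations. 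Your antisymmetry property $\delta_u([b,a])=-\delta_u([a,b])$ is what makes cancelling edges contribute zero, so the concatenated and reduced representations give the same number; this is the step the paper needs but does not state. Third, you make explicit that the forced lift into $\{-1,0,1\}$ is canonical and context-independent, which requires $n\geq 3$. That hypothesis does not appear in the lemma, but it is already implicit in the paper's Definition~\ref{def:degree} (the forcing is ambiguous when $n=2$), so flagging it is a clarification rather than a defect in your argument. In short: same route, but your version is the one that would survive refereeing.
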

\begin{proof}
Let $\alpha=\sum_{i=1}^{K}[a_i,b_i]$ and $\beta=\sum_{i=1}^{L}[c_i,d_i]$, then
\begin{align}
&\degree(u,\alpha+\beta)\\
=&1/n(\sum_{i=0}^{k-1}(u(b_i)-u(a_i))+\sum_{i=0}^{k-1}(u(d_i)-u(c_i)))\\
=&1/n\sum_{i=0}^{k-1}(u(b_i)-u(a_i))+1/n\sum_{i=0}^{k-1}(u(d_i)-u(c_i))\\
=&\degree(u,\alpha)+\degree(u,\beta)
\end{align}
\end{proof}

\begin{lemma}
\label{lem:invariant degree}
For a cycle $\alpha$ and a continuous state $u$, the degree of $u$ on this cycle is invariant under the GHM updating rule $\Update$, i.e.,
\begin{equation}
\label{eq:update difference}
1/n\sum_{i=0}^{k-1}((\Update(u))(x_{i+1})-(\Update(u))(x_i))=1/n\sum_{i=0}^{k-1}(u(x_{i+1})-u(x_i))
\end{equation}
\end{lemma}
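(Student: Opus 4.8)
The plan is to reduce to a single loop and then track, edge by edge, how the reduced degree summands change under one application of $\Update$. By the remark following Definition \ref{def:cycle}, every cycle is a sum of loops, and by Lemma \ref{lem:degree of summation} the degree is additive over such sums; hence it suffices to establish \eqref{eq:update difference} when $\alpha$ is a single loop $\sum_{i=0}^{K-1}[x_i,x_{i+1}]$ with $x_0=x_K$. Note also that, by Corollary \ref{cor:continuity holds}, continuity is forward-invariant, so $\Update(u)$ is again continuous on $\alpha$ and $\degree(\Update(u),\alpha)$ is well-defined to begin with.

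First I would introduce the per-node advance indicator $\epsilon(x):=\Update(u)(x)-u(x)$, which by the update rule equals $0$ when $x$ remains in state $0$ (i.e. $x\in S$, in the notation of the lemma showing subordinate nodes stay continuous) and equals $1$ otherwise. Then on each edge the actual change in the state difference is $(\Update(u)(x_{i+1})-\Update(u)(x_i))-(u(x_{i+1})-u(x_i))=\epsilon(x_{i+1})-\epsilon(x_i)$, which is exactly the trichotomy recorded in \eqref{eq:update diff cases}. The crucial structural fact is that these increments telescope around the loop: $\sum_{i=0}^{K-1}(\epsilon(x_{i+1})-\epsilon(x_i))=\epsilon(x_K)-\epsilon(x_0)=0$, since $x_0=x_K$.

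The main obstacle, and the step that requires care, is that the degree summands are not the raw differences $u(x_{i+1})-u(x_i)$ but their representatives forced into $\{-1,0,1\}$, so I must verify that the change in the \emph{reduced} summand on each edge agrees, as an integer, with $\epsilon(x_{i+1})-\epsilon(x_i)$ — that is, that no modular wrap-around intervenes. This is precisely what the argument in the proof of that lemma supplies: a reduced summand of value $1$ could be pushed to the forbidden value $2$ only if $u(x_{i+1})=1$, $u(x_i)=0$ with $x_{i+1}\notin S$ and $x_i\in S$, and this is impossible, because $x_i$ has the neighbor $x_{i+1}$ in state $1$ and therefore cannot remain in state $0$; the symmetric case for the value $-1$ is excluded identically. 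Consequently the reduced difference stays within $\{-1,0,1\}$ and changes by exactly $\epsilon(x_{i+1})-\epsilon(x_i)$ on each edge.

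Finally I would sum the per-edge changes over the loop. Since each reduced summand changes by $\epsilon(x_{i+1})-\epsilon(x_i)$ and these telescope to $0$, the total is unchanged, giving $\degree(\Update(u),\alpha)=\degree(u,\alpha)$ for every loop, and hence, by additivity over loops, for every cycle. This is the assertion \eqref{eq:update difference}. The only genuinely delicate point is the wrap-around exclusion in the third paragraph; everything else is telescoping and the additivity already in hand from Lemma \ref{lem:degree of summation}.
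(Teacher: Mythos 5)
Your proof is correct and follows essentially the same route as the paper's: reduce to a single loop, apply the per-edge case analysis of Equation \eqref{eq:update diff cases}, observe that the per-edge changes cancel around the loop (your telescoping of $\epsilon$ is just a reformulation of the paper's count of pairs entering versus exiting $S$), and extend to all cycles via the additivity of Lemma \ref{lem:degree of summation}. Your third paragraph, which rules out modular wrap-around of the reduced summands in $\{-1,0,1\}$, makes explicit a point the paper leaves implicit in its appeal to Equation \eqref{eq:update diff cases}, so it is a clarification rather than a genuinely different argument.
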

\begin{proof}
We first prove that degree on a loop $\sum_{i=0}^{K-1}[x_i,x_{i+1}], x_0=x_K$ is invariant. As before, equation \ref{eq:update diff cases} holds for every pair of neighbors $x_{i+1}$ and $x_i$. Since the number of pairs $(x_{i+1},x_i)$ with $x_i\in S, x_{i+1}\notin S$ is the same as the number of pairs with $x_i\notin S, x_{i+1}\in S$, the summation of $((\Update(u))(x_{i+1})-(\Update(u))(x_i))-(u(x_{i+1})-u(x_i))$ is $0$, which makes Equation \ref{eq:update difference} hold. Since every cycle is the sum of one or more loops, and degree is additive by Lemma \ref{lem:degree of summation}, then it is also invariant on cycles.
\end{proof}

For a state $u$ on a loop that forms a defect, if the loop bounds a region $V$ in $\real^2$ that belongs to $\Domain$, we can discuss the continuity of the subnetwork in $V$. If the subnetwork in $V$ is sufficiently dense (\eg, 2-complex has shadow containing $V$), we observe that the subnetwork could never reach continuity, with at least one singularity (a discontinuity) forced, as in Figure \ref{fig:defect discontinuity}. This could be understood intuitively as a discrete version of the theorem in complex analysis, which says a holomorphic function on a domain always has integration $0$ on the boundary. It would also contradict the fact that a continuous map from a contractible space to $S^1$ has degree $0$ restricted on any loop. 

\begin{figure}
\centering
\epsfig{file=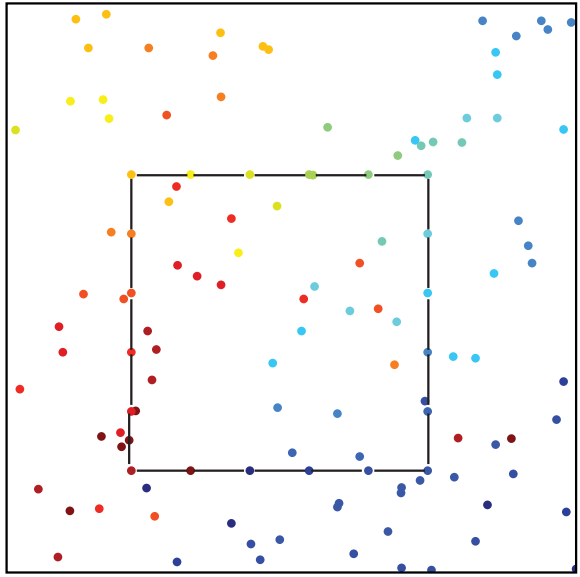,height=3in,width=3in}
\caption{In the region bounded by a defect, the state is discontinuous.}
\label{fig:defect discontinuity}
\end{figure}

\begin{lemma}
\label{lem:defect discontinuity}
Consider a state $u$ on $X$ with $n>3$, and a loop $l=\sum_{i=0}^{K-1}[x_i,x_{i+1}], x_0=x_K$ in $X$. If the loop $l$ is null homologous in the 2-complex built on $X$, and $u$ on $l$ makes a defect, then $u$ is discontinuous on $X$. For $n\leq 3$ any state on $X$ is continuous.
\end{lemma}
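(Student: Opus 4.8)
The plan is to split along the two regimes named in the statement. For $n\leq 3$ the claim is immediate: in $\zed_n$ the circle-distance between any two residues is at most $\lfloor n/2\rfloor$, which is $\leq 1$ precisely when $n\leq 3$. Hence for such $n$ every pair of neighbours automatically satisfies $|u(x)-u(y)|\leq 1$, so \emph{every} state is continuous and there is nothing to prove. This also explains why the interesting case needs $n>3$: only then can a genuine discontinuity be forced.

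For $n>3$ I would argue by contradiction, assuming $u$ is continuous on all of $X$ and deriving $\degree(u,l)=0$, which contradicts the hypothesis that $u$ makes a defect on $l$. The engine is a single computation on one triangle. Let $\tau=[p,q,r]$ be any $2$-simplex of the $2$-complex, with boundary loop $\partial\tau=[p,q]+[q,r]+[r,p]$ of length three; since the three vertices are pairwise neighbours, continuity of $u$ on $X$ restricts to continuity on $\partial\tau$, so $\degree(u,\partial\tau)$ is defined. Each of the three forced differences lies in $\{-1,0,1\}$, so their ordinary-integer sum lies in $\{-3,\dots,3\}$; on the other hand, telescoping the same differences in $\zed_n$ around the closed loop gives $u(p)-u(p)=0$, so the integer sum is a multiple of $n$. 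As $n>3$, the only multiple of $n$ in $\{-3,\dots,3\}$ is $0$, whence $\degree(u,\partial\tau)=0$.

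Next I would exploit null-homology. Because $l$ is null homologous in the $2$-complex, there is an integral $2$-chain $c=\sum_j m_j\tau_j$ with $\partial c=l$, so that $l=\sum_j m_j\,\partial\tau_j$ as a formal sum of edges in the sense of Definition \ref{def:cycle}. Under the standing continuity assumption $u$ is continuous on each triangle boundary $\partial\tau_j$, and since the degree formula is linear in the underlying chain, Lemma \ref{lem:degree of summation} (promoted from two summands to a finite integer combination) gives
\begin{displaymath}
\degree(u,l)=\degree\Bigl(u,\sum_j m_j\,\partial\tau_j\Bigr)=\sum_j m_j\,\degree(u,\partial\tau_j)=0 .
\end{displaymath}
This contradicts $\degree(u,l)\neq 0$, forcing $u$ to fail continuity somewhere on $X$.

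The main obstacle I anticipate is bookkeeping rather than conceptual: one must check that the simplicial boundary $\partial c=\sum_j m_j\,\partial\tau_j$ really coincides with the loop $l$, orientations and interior-edge cancellations included, so that Lemma \ref{lem:degree of summation} can be applied term by term. In particular, edges shared by adjacent triangles should cancel in pairs — consistent with continuity assigning opposite forced differences to an edge traversed in opposite directions — and the additivity lemma, stated for a pair of cycles, must be extended to finite integer combinations. Once this identification is secured, the one-triangle computation closes the argument.
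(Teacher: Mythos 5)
Your proposal is correct and takes essentially the same route as the paper: both rest on decomposing the null-homologous loop into $2$-simplex boundaries and on the arithmetic fact that the forced-difference sum around a triangle lies in $\{-3,\dots,3\}$ yet must be a multiple of $n$, hence vanishes when $n>3$. The only difference is organizational --- the paper peels off one $2$-simplex at a time (a sequence of homologous loops, each step changing the degree by at most $3/n$ and therefore, by integrality, not at all), whereas you apply the additivity of degree to the whole chain $\sum_j m_j\,\partial\tau_j$ in one shot; the orientation/cancellation bookkeeping you flag is exactly what the paper also leaves implicit.
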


\begin{proof}
Since $l$ is null homologous in 2-complex built on $X$, $l=\partial\beta$ where $\beta=\sum_{i=1}^{m}\beta_{i}$ is a 2-chain in the 2-complex and $\beta_{i}$ are 2-simplices. $l$ can be deformed to a single 2-simplex through a sequence of homologous loops in $X$, $\sum_{i=1}^{m-j}\partial\beta_{i}, j=1,\dots,m-1$, while the successive two loops only differ by the boundary of one 2-simplex. Suppose $u$ is continuous on $X$, such operation could not change the degree at all, since at most three of the summands $u(x_{i+1})-u(x_i)$ has been changed value up to 1. Thus the summation is at most changed by 3, which makes the degree changed by at most $3/n$, which has to be invariant when $n>3$. Thus the nonzero degree remains the same for the sequence of homologous loops, which can not be true because the degree on the boundary of a single 2-simplex has to be zero. Therefore the state on $X$ could not be continuous.

It is trivial to see the continuity when $n\leq 3$, because any two elements in $\zed_n$ differ by at most 1.
\end{proof}

\begin{theorem}
\label{thm:die iff no defect}
For a continuous state $u$ on a connected compact network $X$, the system eventually turns to all-$0$ state (die out) if and only if $u$ does not contain a defect.
\end{theorem}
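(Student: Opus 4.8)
The plan is to treat the two implications separately, with the reverse direction (absence of a defect forcing die-out) carrying essentially all of the difficulty. For the forward direction I would argue by contraposition using invariance of degree. If $u_T$ is the all-$0$ state for some $T$, then every forced difference $u_T(b_i)-u_T(a_i)$ vanishes, so $\degree(u_T,\alpha)=0$ for every cycle $\alpha$. By Lemma \ref{lem:invariant degree} degree is carried unchanged by $\Update$, hence $\degree(u_0,\alpha)=\degree(u_T,\alpha)=0$ for all $\alpha$, so $u_0$ contains no defect. Equivalently, a defect is never destroyed, and a configuration containing one can never reach the all-$0$ state.

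For the reverse direction I would pass to an integer lift. Assuming $u=u_0$ has no defect, every cycle has degree $0$, so the $\{-1,0,1\}$-valued forced-difference cochain sums to zero on every cycle and is therefore exact: fixing a basepoint on the connected graph $X$ gives $\tilde u_0:X\to\zed$ with $\tilde u_0(b)-\tilde u_0(a)$ equal to the forced difference on each edge and $\tilde u_0\equiv u_0 \pmod n$. I would then propagate the lift in time by $\tilde u_{t+1}(x)=\tilde u_t(x)+1$ exactly when $x$ advances under $\Update$, and $\tilde u_{t+1}(x)=\tilde u_t(x)$ otherwise. Two facts must be verified: that this remains a lift of $u_t$ (immediate modulo $n$), and that it remains \emph{continuous} as an integer function, i.e. neighboring values differ by at most $1$. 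The latter is precisely Equation \ref{eq:update diff cases} together with the subordination argument (a state-$0$ node adjacent to a state-$1$ node cannot stall), and is the integer shadow of Corollary \ref{cor:continuity holds}; I would record it as the one routine induction.

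The crux is then a maximum-principle argument on $\tilde u_t$, available precisely because the lift is global (this is where no-defect is used). Set $M_t=\max_x \tilde u_t(x)$, finite since $X$ is compact. Each $\tilde u_t(x)$ is non-decreasing in $t$, and no node can exceed the current maximum by more than one per step, so whenever $M_t\not\equiv 0 \pmod n$ the maximizing nodes all advance and $M_{t+1}=M_t+1$; thus within fewer than $n$ steps $M_t$ reaches a multiple of $n$, say $M^\ast$. Once $M_t=M^\ast\equiv 0$, a maximizing node sits in state $0$ with no neighbor above it, hence stalls, and since no node can ever climb past $M^\ast$ the maximum is frozen at $M^\ast$ for all later time. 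Now every $\tilde u_t(x)$ is non-decreasing and bounded above by $M^\ast$, so each stabilizes; taking the largest stabilization time over the finitely many nodes yields a time after which no node advances. But a node stalls only when it is in state $0$, so at that time every node is in state $0$ and the system has died out.

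I expect the main obstacle to be the temporal bookkeeping of the lift — verifying that the time-propagated $\tilde u_t$ stays continuous and genuinely lifts $u_t$ — since the maximum-principle step is short once the global lift is in hand. A secondary point to dispatch is the degenerate small-$n$ range (identifying the exciting neighbor's lift value needs $n>2$, though the maximum argument itself does not), consistent with the $n\le 3$ remark in Lemma \ref{lem:defect discontinuity}.
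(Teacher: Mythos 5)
Your proof is correct, and the hard direction takes a genuinely different route from the paper's. The paper proves that direction by contraposition: assuming the system never dies out, at large times every node in state $i\neq 0$ has a neighbor in state $i+1$, so one chases states forward ($1\to 2\to\cdots$, then along state-$0$ nodes to the next state-$1$ node); compactness forces this walk to close up, exhibiting a loop of nonzero degree in $u_t$ and hence, by Lemma \ref{lem:invariant degree}, a defect already present in $u$. The delicate point there is a backward-in-time argument ruling out a set $A$ of state-$0$ nodes all of whose outside neighbors are in state $n-1$. You instead prove the implication directly: degree zero on every cycle makes the forced-difference cochain exact on the connected graph, so $u$ lifts to an integer-valued function --- the discrete analogue of lifting a winding-number-zero circle-valued map to $\real$ --- the lift propagates in time (its continuity is exactly the subordination argument behind Equation \ref{eq:update diff cases} and Corollary \ref{cor:continuity holds}), and your maximum principle freezes it at the first multiple of $n$: a maximal node's neighbors sit at lift $M^\ast$ or $M^\ast-1$, i.e.\ in states $0$ or $n-1$, neither of which is $1$ once $n\geq 3$, so the maximum never grows again, and monotone boundedness forces every node to stabilize, which under GHM can only happen at the all-$0$ state. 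Your route is cleaner, avoids the paper's backward-in-time combinatorics, and dovetails with the paper's own cohomological reading of the theorem (your lift is precisely a trivialization of the class $h(u)$ of \S\ref{sec:seedoff}); what the paper's construction buys in exchange is structural information, namely that any persistent continuous state must contain an explicit seed-like defect at late times. The small-$n$ caveat you flag is harmless: the summands in the degree are only unambiguously forced into $\{-1,0,1\}$ when $n\geq 3$, so the theorem implicitly lives in that range anyway.
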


\begin{proof}
Suppose $u$ contains a defect on cycle $\alpha=\sum_{i=1}^{K}[a_i,b_i]$.
By lemma \ref{lem:invariant degree}, the degree is invariant under $\Update$, so it will never be $0$, thus the system will never turn to all-$0$ state.

For the converse, we need to show for a continuous state $u$ not dying out eventually, it has to contain a defect at the beginning. Firstly, it is obvious that after long enough time, in such system $u_t$, every node in state $i$ must have a neighbor in state $i+1$, for all $i\neq0$, since $u_t=\Update^i(u_{t-i})$. So if we start from a node $x_0$, such that $u_t(x_0)=1$, we can find a neighbor of $x_0$, $x_1$, such that $u(x_1)=2$. Following the process, we get a sequence of nodes $x_0,x_1,\dots,x_{n-1}$, such that $x_i$ and $x_{i+1}$ are neighbors and $u_t(x_i)=i+1 \mod n$. If from every state $0$ node, a state $1$ node could be reached by jumping along neighbors which are in state $0$, then following the process, we will finally reach a node has been visited before, as $X$ is compact. In this way, we have obtained a defect in $u_t$, which is also a defect in $u$, by lemma \ref{lem:invariant degree} and the fact that $u$ is continuous.

To see that a node with state $1$ could always be reached from a node with state $0$ by jumping along neighbors in state $0$, all we need to prove is there is no such set $A$ of nodes with state $0$, that their neighbors not in $A$ could only be in state $n-1$. If such $A$ exists in $u_t$, then there is a proper subset of $A$ with state $0$, and state $n-1$ on the complement in $u_{t-1}$. Following these procedure, we should finally obtain a set $A_0$ of state $0$ nodes, each has at least one neighbor with state $n-1$ and other neighbors with state $0$ in $u_{s}$. Then in $u_{s-1}$, nodes in $A_0$ have to be in state $n-1$ (by continuity), and their neighbors not in $A_0$ must all be in state $n-2$, and for $n$ steps back, in $u_{s-n}$, nodes in $A_0$ have to be in state $0$, and their neighbors not in $A_0$ must all be in state $n-1$. But such a $u_{s-n}$ could not produce $u_{s-n+1}$ under $\Update$, because those state $0$ nodes have no neighbors in state $1$. Therefore such a set $A$ does not exist, which makes the statement in the beginning true.
\end{proof}

Since degree is invariant under the update rule $\Update$, Theorem \ref{thm:die iff no defect} can be interpreted as saying that a continuous state dies out eventually if and only if it is cohomologically trivial (see \S\ref{sec:seedoff} for details on how to define the cohomology class of a state).

\begin{theorem}
There exists a directed subgraph $\mathcal{F}$ of the network that is a spanning forest rooted at seeds, with directed edges in the direction of subordination.
\end{theorem}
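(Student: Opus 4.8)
The plan is to build $\mathcal{F}$ by tracking, for each node, the moment and the neighbor through which it inherits periodicity, and then to read off the directed edges from the subordination relation at that moment. Throughout I assume, as in the preceding seed lemma, that $X$ is connected and compact and that $u_0$ contains at least one seed; by that lemma every node is eventually $n$-periodic, so there is a finite time $T$ with $P_T = X$, and I work with the monotone chain $P_0 \subseteq P_1 \subseteq \cdots$ of $n$-periodic nodes.

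First I would assign to each node $x$ its \emph{birth time} $\tau(x) = \min\{t : x \in P_t\}$. The nodes lying on a seed have $\tau = 0$ (every seed node has period $n$ from the outset), and these serve as the roots. For a node $x$ with $\tau(x) > 0$, the argument inside the seed lemma shows that $x$ becomes $n$-periodic only by becoming subordinate to a neighbor $y$ that was already $n$-periodic; I would select one such $y$ as the parent of $x$ and record the directed edge $x \to y$, pointing from the subordinate node to the node it is subordinate to, i.e.\ in the direction of subordination. The point to extract from Corollary~\ref{cor:subordination hold} is that this subordination is \emph{permanent}: once $x$ is phase-locked to the $n$-periodic $y$ with $u(y) = u(x)+1$, the two stay synchronized for all future time, so the edge $x \to y$ is well defined independently of $t$.

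Next I would prove acyclicity. The crucial claim is that the chosen parent satisfies $\tau(y) < \tau(x)$ strictly, so that $\tau$ strictly decreases along every directed edge; then every directed path is finite, no directed cycle can occur, and the edge set is a forest. Following parent pointers from any node, $\tau$ decreases until it reaches $0$, i.e.\ a seed node, so every maximal directed path terminates on a seed and $\mathcal{F}$ is spanning with its roots on the seeds. One interpretive remark I would include: a seed is itself a loop on which $u(x_{i+1}) = u(x_i)+1$ all the way around, so within a seed the subordination relation is a directed cycle of degree one; to exhibit a genuine forest I would take the seed nodes as the designated root set and hang the trees off them, which is the sense in which $\mathcal{F}$ is ``rooted at seeds.''

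The step I expect to be the main obstacle is pinning down the strict inequality $\tau(y) < \tau(x)$ cleanly, since Corollary~\ref{cor:subordination hold} only requires $y$ to be periodic \emph{at} the time the subordination is used, not strictly before. To close this I would lean on the explicit mechanism in the seed lemma, where a node acquires periodicity only after a periodic neighbor has been present for up to $n$ steps while the phase offset grows to $1$; this forces the responsible neighbor to have been periodic strictly earlier. Any residual ties I would break with a fixed linear order on the finite vertex set. The remaining verifications, namely well-definedness of the edges, their permanence in time, and the spanning property, then follow routinely from the continuity and subordination results already established.
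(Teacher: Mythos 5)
Your construction is the same as the paper's own proof: each non-seed node selects as parent a periodic neighbor to which it became subordinate, the edges are oriented by this relation, acyclicity is argued from monotonicity of the times at which nodes become periodic, and directed paths terminate at seed nodes because the network is finite. (The paper orients edges from parent to child rather than from child to parent as you do, but that is cosmetic.)

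Where you go beyond the paper is in flagging the strict inequality $\tau(y)<\tau(x)$ as the delicate point, and your suspicion is warranted: the inequality is false in general. If the initial condition already contains a chain of subordinations hanging off a seed---say $u_0(b)=u_0(a)-1$ and $u_0(a)=u_0(s)-1$ with $s$ a seed node---then by Corollary \ref{cor:subordination hold} the whole chain is $n$-periodic from time $0$, so a parent and its child can have equal birth times even though neither mechanism you describe (waiting for the phase offset to grow) ever occurs. Your fallback, breaking ties by a fixed linear order on the vertices, does not repair the argument: the order only affects which eligible parent gets chosen, and if the eligible parents of a set of tied nodes themselves form a directed cycle of subordinations, every choice produces a cycle. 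The repair that actually works is structural rather than order-theoretic: along any directed cycle of parent edges all birth times must coincide (each parent is periodic no later than its child), hence all the subordinations hold simultaneously at that common time, hence the state increases by exactly $1$ along each edge around the cycle---which forces the cycle length to be a multiple of $n$ and makes the cycle a seed in the sense of Definition \ref{def:seed}. Such a cycle can therefore be absorbed into the root set, and the forest remains ``rooted at seeds.'' To be fair to you, the paper's proof asserts the strict ordering (``the head node becomes periodic later than the tail node'') without addressing ties at all, so it carries exactly the gap you identified; your write-up differs from the paper's mainly in making that gap visible, but the patch you propose does not close it.
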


\begin{proof}
Every node that is not originally a seed node will become periodic by building up a subordination with some periodic neighbor. For every non-seed node, choose one from its neighbors via subordination and use a directed edge with itself as head to represent the relationship. This forms a directed subgraph of the network. From any non-seed node, following those directed edges with inverse directions, it has to end in a seed node, because it is a compact network. We argue that the subgraph is a tree because it contains no loop: if it did contain a loop, then the loop is comprised of non-seed nodes, but for any directed edge, the head node becomes periodic later than the tail node, which is a contradiction with being in a loop. And furthermore, we can treat the forests as rooted at seeds, which makes every edge in the direction that goes deeper in a branch to the leaves. Such structure gives the nodes a hierarchy, and since for every edge, the two end nodes have states offset by 1, we can induce the state after the system reaches equilibrium.
\end{proof}

\begin{definition}
\label{def:depth}
The depth of a node in a tree is the number of hops between the node an the root of the tree.
\end{definition}

According to the above proof, we have made a point in that the growth of the forest is at most one level at a time, which means in every time step, there could not be more than one node from a same branch that becomes subordinated.

Starting from a uniformly randomly generated initial condition (a reasonable if idealized statistical model) the system is not guaranteed to converge to a periodic system, not including all-zero states. One sufficient condition is the existence of a seed, which we prove to be of high probability with certain reasonable assumptions (Lemma \ref{lem:seed prob}). It is possible that the system became messy with no wavefront observable (too many seeds all around in the space, for instance). We would require those nodes that are far away (in the hop-metric) from the defects to be in state 0 at one moment (in our case, larger than the number of states is already enough). This assumption is proved later to be of high probability (Lemma \ref{lem:far nodes die}). Under the above two assumptions, continuity in the acquired region will be guaranteed. Therefore, from now on, we limit discussion to the region far away from defects.

\begin{lemma}
\label{lem:seed prob}
For a fixed uniformly sampled network with communication radius $r$ and fixed $n$ on a domain consisting of fixed narrow hallways, the probability of at least one seed existing in the initial condition generated according to uniform distribution approaches 1 as the number of nodes grows.
\end{lemma}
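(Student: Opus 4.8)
The plan is to reduce the existence of a seed to a purely local combinatorial event and then estimate that event with an elementary coupon--collector argument. The key observation is that a seed can always be manufactured inside any small ball on which the induced graph is complete. Concretely, I would fix a disk $D$ contained in the domain whose diameter is strictly less than $r$; since the hallways are planar regions of positive width, such a $D$ exists, say of area $a>0$. Any two nodes falling in $D$ lie within distance $r$, so the nodes in $D$ form a clique in the network. Hence, if $D$ happens to contain at least one node in each of the $n$ states $0,1,\dots,n-1$, I may select one representative $x_i$ with $u(x_i)=i$ for each $i$ and form the loop $\sum_{i=0}^{n-1}[x_i,x_{i+1}]$ with $x_n=x_0$. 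Every edge of this loop is a genuine edge of the network (by the clique property), and $u(x_i)=i \bmod n$, so by Definition \ref{def:seed} this loop is a seed (indeed a defect of degree $1$, consistent with Lemma \ref{lem:invariant degree}). Thus it suffices to show that, with probability tending to $1$, the fixed region $D$ contains every state.

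Next I would fix the probabilistic model: the $N$ node positions are drawn i.i.d.\ uniformly on the domain $\Domain$ (total area $A$), and the states are assigned i.i.d.\ uniformly on $\zed_n$, independently of the positions. Then a single node lands in $D$ carrying a prescribed state $j$ with probability $p/n$, where $p=a/A>0$, and these events are independent across nodes. Writing $E_j$ for the event that no node in $D$ carries state $j$, the number of nodes in $D$ with state $j$ is $\mathrm{Binomial}(N,p/n)$, so $\mathbb{P}(E_j)=(1-p/n)^N$.

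The estimate is then immediate from a union bound: the region $D$ fails to contain some state precisely on $\bigcup_{j=0}^{n-1}E_j$, whence
\begin{equation}
\mathbb{P}\!\left(\text{no seed supported in } D\right)\le \mathbb{P}\Big(\bigcup_{j=0}^{n-1}E_j\Big)\le \sum_{j=0}^{n-1}\mathbb{P}(E_j)=n\,(1-p/n)^N .
\end{equation}
Since $0<p/n<1$ is fixed while $N\to\infty$, the right-hand side tends to $0$, so the probability that $D$ (and hence the whole network) supports at least one seed is at least $1-n(1-p/n)^N\to 1$, proving the claim.

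The probabilistic tail is routine; the genuine content lies in two structural points that must be pinned down carefully. First, one must fix the sampling model --- I am assuming independent uniform positions and independent uniform states --- since the conclusion relies on this independence. Second, and this is the step I expect to need the most care, one must verify the geometric input that the narrow hallways, being planar regions of positive width, always admit a disk of diameter less than $r$ lying inside the domain; this is what guarantees the pairwise-within-$r$ clique on which the local seed is built. Everything else follows from the clique reduction, which trades the global (and topologically delicate) notion of a winding loop for the simple demand that one fixed positive-area window see all $n$ states.
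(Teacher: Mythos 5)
Your proof is correct and follows essentially the same route as the paper's: reduce to a region of diameter at most $r$ (where the nodes form a clique, so representatives of all $n$ states strung into a loop give a seed by Definition \ref{def:seed}), then bound the probability that some state is missing there by an expression of the form $n(1-\cdot)^N \to 0$. The only cosmetic difference is that the paper tiles the whole domain into squares of side $r/\sqrt{2}$ and takes a product of the per-tile failure probabilities, whereas you fix a single disk $D$; your single-window version is, if anything, cleaner, since it avoids the paper's implicit independence-across-tiles step at the cost of a slightly weaker (but still sufficient) bound.
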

\begin{proof}
Divide the space into square shaped pieces $\Domain_i$ indexed by $I$, with side length smaller or equal to $r/\sqrt{2}$. As the network size $|X|$ approaches infinity, the probability of there to be no less than $n$ nodes in each $\Domain_i$ approaches 1. For a $\Domain_i$ with $n$ or more nodes, the sub-network in this subdomain makes a complete graph. Therefore, there is no seed in the initial condition, if and only if the nodes do not cover all the states, which means there is at least one state missing in the initial condition. Thus
\begin{multline}
P(\text{no seed in initial condition in $\Domain_i$ with $m_i$ nodes})\\
\leq \frac{n(n-1)^{m_i}}{n^{m_i}}\\
= n(1-1/n)^{m_i}\\
\end{multline}
and
\begin{multline}
P(\text{no seed in initial condition in $\Domain$})\\
\leq \prod_{i=1}^{|I|}P(\text{no seed in initial condition in $\Domain_i$ with $m_i$ nodes})\\
\leq \prod_{i=1}^{|I|}n(1-1/n)^{m_i}\\
= n^{|I|}(1-1/n)^{|X|}\\
\end{multline}
which approaches 0 as $|X|$ approaches infinity.
\end{proof}

\begin{lemma}
\label{lem:far nodes die}
Starting with a fixed network and uniformly distributed initial conditions, with probability approaching $1$ as the state number $n$ grows, nodes with hop distance to all defects bigger than $2n$ will turn to state $0$ after $2n-2$ time steps.
\end{lemma}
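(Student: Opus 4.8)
The plan is to split the argument into a deterministic skeleton and a probabilistic tail estimate. For the skeleton, I would first note the elementary fact that under $\Update$ a node in any nonzero state merely advances, so a node whose initial state is $j\neq 0$ reaches state $0$ after exactly $n-j\le n-1$ steps and a node with initial state $0$ is already there; hence every node of $X$ visits state $0$ at least once by time $n-1$. Thus if the target node $x$ is not in state $0$ at time $2n-2$, it must have been re-excited (sent from $0$ to $1$) at a \emph{last} time $t_e$ satisfying $n-1\le t_e\le 2n-3$, for otherwise it would have cycled back to $0$ within the remaining at most $n-1$ steps. I would also invoke locality: since information travels at most one hop per step, $u_{2n-2}(x)$ depends only on $u_0$ restricted to the hop-ball $B(x,2n-2)$, which by hypothesis contains no defect. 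In particular a wavefront issuing from a genuine defect needs more than $2n>2n-2$ steps to reach $x$, so the re-excitation at $t_e$ must be produced by a \emph{transient} wave born in the random initial data inside the ball.

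Next I would extract the rigid structure forced by a late re-excitation. Because a node attains state $1$ only by excitation out of state $0$, the state-$1$ neighbor of $x$ at time $t_e$ is the head of a walk $x_0=x,x_1,x_2,\dots$ with $x_k$ in state $1$ at time $t_e-k+1$. For $1\le k\le n-1$ the node $x_k$ is nonzero on the whole interval $[t_e-k+1,\,t_e]$, hence cannot be excited there and simply advances, so $u_{t_e}(x_k)=k$; together with $u_{t_e}(x_0)=0$ this exhibits, at time $t_e$, a path $x_0,x_1,\dots,x_{n-1}$ in $B(x,n-1)$ carrying the \emph{full ramp} of states $0,1,\dots,n-1$. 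Since the states are distinct the nodes are distinct, so this is a simple path of $n$ vertices; in particular the failure event is \emph{empty} whenever $B(x,n-1)$ contains fewer than $n$ nodes, and in general it is contained in the event that such a sustained front exists. This is the discrete analogue of a seed (Definition \ref{def:seed}) being dragged through the causal region, and it is precisely the object whose probability I must control.

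For the probabilistic estimate I would trace this front back to time $0$. The node that joins the front at graph-distance $j$ from the wave's origin is excited for the first time at about time $j$, so it must already be resting in state $0$ when the front arrives, which forces its initial state into the set $\{0\}\cup\{n-j,\dots,n-1\}$ of size at most $j+1$. Multiplying these constraints for $j=0,1,\dots,n-1$ bounds the probability that any one fixed candidate path sustains a front by $(n-1)!/n^{\,n-1}$, which by Stirling is of order $e^{-n}$ up to polynomial factors, i.e. superexponentially small in $n$. I would then take a union bound over the finitely many candidate origin paths in $B(x,2n-2)$ and over the $O(n)$ admissible values of $t_e$, and conclude that the total failure probability tends to $0$ as $n\to\infty$; combined with the skeleton this shows that with probability approaching $1$ no transient wave re-excites $x$ after it first reaches $0$, so $x$ sits in state $0$ at time $2n-2$.

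I expect the genuine difficulty to lie entirely in that final union bound, namely in balancing the superexponentially small per-path weight against the number of candidate wave paths. For a graph of large maximum degree the count of length-$n$ walks can itself grow like $C^{n}$, and since the per-path probability is only of order $e^{-n}$ the naive bound $(C/e)^n$ need not decay. This is exactly where the \style{narrow hallway} hypothesis is essential: the quasi-one-dimensional, bounded-branching geometry of the workspace forces the number of macroscopically distinct routes of length $n$ to grow only polynomially (the distinct-state condition along the ramp further forbids the front from idling inside a single dense cross-section), so that the energy term $e^{-n}$ dominates the entropy term and the sum tends to $0$. A secondary technical point I would need to handle carefully is nodes that are excited more than once; I would deal with these by always following the \emph{leading} edge of the front, for which the ``first excitation'' description above is exact, so that re-excitations only tighten, never loosen, the constraints.
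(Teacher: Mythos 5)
Your first two stages coincide in substance with the paper's proof. The paper's deterministic reduction is an outward induction (if no node at distance $\ge n$ from all defects has state $1$ at time $n-1$, then no node at distance $\ge n+1$ has state $1$ or $2$ at time $n$, \dots, so all nodes at distance $\ge 2n$ are in state $0$ at time $2n-2$) rather than your last-excitation-time argument, but both rest on the same causality. The paper then extracts the same ramp: a chain of neighbors $x_1,\dots,x_{n-1}$ with $u_{n-1}(x_j)=j$, traced back to time $0$ to give exactly your constraint sets (the node at hop distance $d$ from the wave's origin must start in a set of size $d+1$), hence the same per-path weight of order $n!/n^n$. The genuine divergence --- and the gap --- is in how the existential quantifier over chains is handled. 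You propose a union bound over candidate paths and then assert that the narrow-hallway geometry makes the number of length-$n$ routes grow only polynomially. That assertion is false: a hallway whose cross-section contains even two nodes within communication range (a ladder-like strip, which is the generic situation here, since the paper's networks are assumed dense) contains exponentially many \emph{simple} monotone paths on $n$ vertices --- at every step the path may choose which node of the next cross-section to visit --- and none of these paths ``idles'' inside a cross-section, so your parenthetical about the distinct-state condition does not prune them. Your union bound therefore reads (number of paths) times (per-path weight) $\approx (\bar B/e)^n$, where $\bar B$ is the effective per-step branching; in the dense regime $\bar B>e$ and the bound does not decay. Passing to ``macroscopically distinct'' routes cannot be done inside a union bound without an argument, missing here, that all paths in one class constitute essentially a single event.

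The paper never counts paths. It bounds the probability that the chain can be \emph{extended} at step $j$ --- that at least one of the at most $\tilde N$ neighbors of the current node has an admissible initial state --- and multiplies these per-step bounds, obtaining $|X|\prod_{j=1}^{n-1}\bigl(1-(1-j/n)^{\tilde N}\bigr)\le |X|\bigl(1-(1/2)^{\tilde N}\bigr)^{(n-1)/2}\to 0$, the single factor $|X|$ accounting for the choice of starting node. (This product itself implicitly treats successive extension steps as independent events, but it avoids any sum over exponentially many supports, which is exactly what your version needs and lacks.) Note finally that in the regime the lemma actually asserts --- a \emph{fixed} network with $n\to\infty$ --- your own side remark already closes the proof: the ramp occupies $n$ nodes in distinct states, hence $n$ distinct nodes, so once $n>|X|$ the failure event is empty and the probability is exactly $1$. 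The repair is thus available inside your own write-up; the step you actually propose to use instead is the one that fails.
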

\begin{proof}
Suppose there is no state $1$ node in $u_{n-1}$ in the region $n$ hops away from any defect, then there could be no state $1$ or $2$ node in $u_{n}$ in the region $n+1$ hops away from any defect, and with the same reason, there could only be state $0$ node in $u_{2n-2}$ in the region $2n$ hops away from any defect. Therefore the probability that every node at least $2n$ hops away from defects are state $0$ in $u_{2n-2}$ is no smaller than the that of no state $1$ node at least $n$ hops away from defects in $u_{n-1}$.

Now suppose there is a node $x$ at least $n$ hops away from any defect, and $u_{n-1}(x_1)=1$. Such $x_1$ must have at least one neighbor of state 2, named $x_2$, otherwise in one step before, it would not be able to update from $0$ to $1$. Via the same argument, there exists a sequence of nodes: $x_j, j=1,2,\dots,n-1$, such that $x_j$ and $x_{j+1}$ are neighbors, and $u_{n-1}(x_j)=j$. For one step ago, $u_{n-2}(x_j)=j-1$ for $j\neq n$, and two steps ago, $u_{n-3}(x_j)=j-2$ for $j=2,\dots,n-1$ and $u_{n-3}(x_1)\in\{0,n-1\}$. Following such argument, back at time $0$, $u_0(x_j)\in\{0,n-1,\dots,j+1\}$ for $j=1,\dots,n-2$ and $u_0(x_{n-1})=0$ with at least one neighbor of state $1$.

Let $I_j=\{0,n-1,\dots,j+1\}$. For a fixed node $x$,
\begin{multline}
P(\text{at least one of $x$'s neighbor have a state in $I_j$ at time 0})\\
=1-(1-j/n)^{|\Neighbor(x)|}\\
\end{multline}
where $|\Neighbor(x)|$ is the number of neighbors of node $x$. Suppose $\tilde{N}$ is a universal upper bound on $|\Neighbor(x)|$, then
\begin{multline}
P(u_{n-1}(x)=1\ \text{for some $x$ at least $n$ hops away from any seed})\\
\leq|X|\prod_{j=1}^{n-1}(1-(1-j/n)^{|\Neighbor(x)|})\\
\leq|X|\prod_{j=1}^{n-1}(1-(1-j/n)^{\tilde{N}})\\
\leq|X|(1-(1/2)^{\tilde{N}})^{(n-1)/2}\\
\end{multline}
which approaches $0$ as $n$ approaches infinity.
\end{proof}

For example, in a $40000$ nodes network, where every node could have up to 6 neighbors and $n=20$, the probability of a seed is bounded below by 0.9656, which validates the observation in previous simulation. As per the above two lemmas, we will always assume that at least one seed exist in initial condition, and the nodes at least $2n$ hops away from any defect will turn to state $0$ after $2n-2$ step. These two assumptions guarantee not only the system not dying out (turn into an all-zero state), but also the continuity of the system in acquired region, with the following corollary.


\subsection{Evasion Game}

We propose a sensor-network based ``Evasion Game'' formally, and then use the model to verify the system: why the wavefronts sweep the entire domain; how to interpret the phenomenon that wavefronts are dividing their neighborhoods and how to prove that an intruder will always fail to evading detection; what are the parameters that control the system and how they are changing the behaviors of those wavefronts.

\begin{definition}
\label{def:evasion game}
Let the domain where the evader and sensors are located be denoted $\mathcal{D}\subset\mathbb{R}^2$, and the sensor network $X$. For each sensor $x\in X$, its coverage is a subset $U_{x}\subset\mathcal{D}$. Denote by $X(t)$ the set of sensors in wake-state ($0$) at time $t$. We define the {\em Evasion Game} as follows: the strategy for the pursuer is to control the network following GHM, and the strategy for the evader is to pick a moment $t_0$ to come into the domain, and follow a continuous path in $\mathcal{D}$: $f: [t_0,\infty)\rightarrow\mathcal{D}$. The pursuer wins iff $\exists\ \tau\in[t_0,\infty)$, such that
\[
    f(\tau)\in\bigcup_{x\in X(\left\lfloor\tau\right\rfloor)}U_{x}.
\]
Otherwise, the evader wins.
\end{definition}

We note that the only requirement on the evader is its trajectory be continuous: there are no constraints on the velocity or acceleration. Even with such minimal constraints, the evader is not able to win.


\section{Limiting case with 1-d hallways}
\label{sec:limitcase}

We begin our analysis with the limiting case when every hallway is sufficiently narrow compared to the walls, so that the domain $\Domain$ can be approximated as a (topologically equivalent) one-dimensional space. We assume those sensors are located in $\Domain$ with each node having a coverage which is a one dimensional convex set around itself, and the convex hall of two neighbors is covered by the union of their coverage regions. We also assume the union of convex hulls of neighbors (subspace of $\Domain$) is good enough to cover $\Domain$, in which case the whole space is covered when every sensor is turned on. If we run GHM on this network, with at least one seed in initial condition, then every evader (not near the seeds) loses the evasion game.

\begin{theorem}
\label{thm:limit case theorem}
For GHM on network $X$ with communication distance $r$ in a compact and connected 1-d complex $\Domain$, if the initial condition contains at least one seed, and there exists a subnetwork $X'$ covering a sub-domain $\Domain'$, such that the state on $X'$ is eventually continuous and contains no defect, an evader will always lose the evasion game on $\Domain'$.
\end{theorem}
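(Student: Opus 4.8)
The plan is to reduce the evasion game to a statement about moving \emph{barriers} of wake-state nodes and to exploit the absence of a defect on $X'$ through a global lift of the phase. First I would pass to the asymptotic regime: since the initial condition contains a seed, the lemma guaranteeing eventual periodicity from a seed shows that every node of the compact connected network is eventually $n$-periodic, and by hypothesis the restriction to $X'$ is eventually continuous. From some time onward, then, each node of $X'$ is $n$-periodic and $u_t|_{X'}$ is continuous, and I restrict all subsequent analysis to these times.

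Next I would build a single-valued integer potential. The hypothesis that $u_t$ carries no defect on $X'$ means precisely that $\degree(u_t,\alpha)=0$ for every cycle $\alpha$ in $X'$. By Lemma \ref{lem:degree of summation} the assignment $[a,b]\mapsto u_t(b)-u_t(a)$ (forced into $\{-1,0,1\}$) is an additive, integer-valued cocycle whose period around every loop vanishes; hence it is a coboundary, and I can lift $u_t$ to a globally defined $\tilde u_t:X'\to\zed$ with $\tilde u_t\equiv u_t \pmod n$ and $|\tilde u_t(b)-\tilde u_t(a)|\le 1$ on edges. This is the discrete analogue of lifting a zero-winding map $S^1\to S^1$ through $\real\to S^1$. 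Because every node is $n$-periodic and genuinely cycling, it increments by one each step, so $\tilde u_{t+1}=\tilde u_t+1$ and thus $\tilde u_t=\tilde u_0+t$; the wake-state nodes are exactly the level set $\{\tilde u_t\equiv 0 \pmod n\}$, and every wavefront translates monotonically ``downhill'' in $\tilde u_0$ as $t$ grows.

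I would then record the two geometric facts that make the game deterministic. Since $\Domain'$ is one-dimensional and the covering hypotheses force the coverage of a state-$0$ node to contain the convex hulls of its incident edges, each wavefront locally disconnects $\Domain'$; thus at every integer time the wake coverage cuts $\Domain'$ into finitely many open \emph{chambers}. On each interval $[t,t+1)$ the wake set is constant, so by continuity of $f$ the evader is confined to a single chamber: crossing a wavefront at any $\tau\in[t,t+1)$ would place $f(\tau)$ in an awake coverage set and lose the game. Finally, compactness of $X'$ makes $\tilde u_0$ bounded, say $\tilde u_0\in[m,M]$, so each ``band'' $\{kn<\tilde u_t<(k+1)n\}$ is nonempty only for $t$ in a finite window; equivalently $\tilde u_0$ attains a local minimum (a \emph{sink} where opposing fronts converge) and a local maximum (a \emph{source}, which emits a front and hence itself becomes wake every $n$ steps). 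Combining these, the chamber containing the evader is either swept by a passing front, collapsed at a sink, or split at a source, so in every case a wavefront sweeps over $f$ within finitely many steps and the evader loses.

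The main obstacle I anticipate is the step handling the evader's unbounded speed: nothing in Definition \ref{def:evasion game} prevents the evader from moving arbitrarily fast to flee an oncoming front, so I must rule out ``riding the wave'' indefinitely. This is exactly where the no-defect hypothesis is essential---without it the evader could circulate around a defect loop forever while staying in a fixed phase band. The careful argument is to use boundedness of $\tilde u_0$ on the compact $X'$ to show that the evader's chamber corresponds to a wave band of finite lifetime, so that no continuous trajectory can remain defect-free in phase forever; formalizing the transition of chambers across integer times (matching $\overline{C_t}$ with $C_{t+1}$ and tracking the band index monotonically) is the delicate bookkeeping the proof must carry out.
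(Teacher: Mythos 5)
Your setup is sound, and it is worth noting that it is essentially the paper's own idea in different clothing: the integer potential $\tilde u$ you extract from the no-defect hypothesis is exactly the data encoded by the paper's space-time section $S\subset P_c$ (Lemma \ref{lem:homeomorphic subspace}), whose consistency around loops is where defect-freeness enters in both arguments. The eventual periodicity coming from the seed, the translation law $\tilde u_t=\tilde u_0+t$, and the confinement of the evader to a single chamber during each interval $[t,t+1)$ are all correct steps.

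The gap is in the concluding mechanism, which is precisely the part you defer as ``delicate bookkeeping.'' Boundedness of $\tilde u_0$ together with ``each band has finite lifetime'' does not catch the evader, because nothing you prove prevents the evader's band index from increasing by $1$ at an integer-time update, i.e.\ from letting a front wake up just below it. Your own bookkeeping yields only that the band index is non-decreasing, and your bounds $m\le\tilde u\le M$ then \emph{force} it to increase roughly once every $n$ steps; these two statements are mutually consistent, so no contradiction follows from them. Indeed the level-set picture alone is too coarse to prove anything: a stationary evader at a point of non-integer interpolated potential never sits on an exact awake level, so it violates no constraint expressible purely in terms of $\tilde u$; what catches it are the fattened coverage sets $U_x$. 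The entire content of the theorem therefore lives in the lemma your proposal never states or proves: \emph{a band increase is impossible}. Concretely, if at time $t+1$ the evader's potential lies in the top unit interval $(\ell+n-1,\ell+n)$ of its band, then $f(t+1)$ lies in the convex hull of a pair of neighbors $x,y$ with $\tilde u(x)=\ell+n-1$, $\tilde u(y)=\ell+n$; since $y$ was awake throughout $[t,t+1)$, continuity of $f$ places $f(t+1)$ in the closed set $\mathrm{hull}\setminus U_y$, while safety at time $t+1$ (when $x$ wakes) requires $f(t+1)\in\mathrm{hull}\setminus U_x$, and these two sets are disjoint because $\mathrm{hull}\subset U_x\cup U_y$ (the case where the evader enters the hull exactly at time $t+1$ is excluded the same way, since it would have to enter through $x$ or $y$). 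Note this is a local coverage-plus-path-continuity argument; it has nothing to do with the no-defect hypothesis, to which you misattribute the resolution (no defect only furnishes the global potential). Once this lemma is in place, band constancy plus $\tilde u\ge m$ plus the descent of the band does finish the proof, and your route becomes a legitimate alternative to the paper's space-time separation argument; as written, however, the proposal asserts the conclusion (``in every case a wavefront sweeps over $f$ within finitely many steps'') exactly at the point where the proof is hardest. A smaller error: the hypotheses do not say a wake node's coverage contains its incident hulls, only that $\mathrm{hull}(x,y)\subset U_x\cup U_y$; fixed-time disconnection survives (any crossing of a front passes through the awake node's own location), but this weaker covering condition is precisely why the transition analysis above cannot be waved away.
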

\begin{proof}
For any time $t_0$ when the evader comes into the domain, consider the product space $\Domain'\times[t_0,\infty)$ with the second coordinate representing time. Treat the coverage of the sensors also as a subspace $P_c$ of $\Domain'\times[t_0,\infty)$, which is
\[
    \bigcup_{t=\lceil t_0\rceil,t\in\zed}^{\infty}\bigcup_{x\in X'(t)}U_{x}\times[t,t+1)\ \ .
 \]
Let $p$ be the projection map: $p: \Domain'\times[t_0,\infty)\rightarrow\Domain', p(a,t)=a$. Then $p:P_c \rightarrow\Domain'$ is onto, because $\Domain'$ is fully covered when every node in $X'$ on. If we could prove that there exists a subspace in $P_c$ homeomorphic to $\Domain'$, with map $p$ as homeomorphism, then $\Domain'\times[t_0,\infty)\backslash P_c$ contains no continuous path from top $\Domain'\times\{t_0\}$ to bottom $\Domain'\times\{T\}$ for $T$ big enough, because they are dual to each other. Therefore no evader could survive forever. The construction of the subspace in $P_c$ is as follows in Lemma \ref{lem:homeomorphic subspace}, below.
\end{proof}

As a remark, a good example for the state on subnetwork $X'$ is eventually continuous and contains no defect is to let it be all-0 state at a moment, which is observed most of the time in simulations.

\begin{lemma}
\label{lem:homeomorphic subspace}
Under the conditions of Theorem \ref{thm:limit case theorem}, there exists a subspace $S\in P_c$, such that $p$ induces a homeomorphism from $S$ to $\Domain'$.
\end{lemma}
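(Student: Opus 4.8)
The plan is to realize $S$ as the graph of a continuous ``covering-time'' section $\sigma\colon\Domain'\to P_c$ of the projection $p$, that is $\sigma(a)=(a,\tau(a))$ for a continuous $\tau\colon\Domain'\to[t_0,\infty)$ with the property that $a\in U_x$ for some wake-state node $x\in X'(\lfloor\tau(a)\rfloor)$. Once such a $\tau$ is produced, $S=\sigma(\Domain')\subset P_c$ and $p|_S$ is a continuous bijection from the compact space $S$ (the image of the compact $\Domain'$ under the continuous $\sigma$) onto the Hausdorff space $\Domain'$, hence automatically a homeomorphism, which is exactly the claim. So the entire content is the construction of a single continuous $\tau$ whose graph stays inside the coverage region.

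First I would pass to the asymptotic regime. By the seed hypothesis together with the eventual-periodicity results, there is a time after which the restriction $u_t|_{X'}$ is $n$-periodic, continuous, and defect-free. In this regime every node of $X'$ visits state $0$ exactly once per period, so by the covering assumptions of Theorem \ref{thm:limit case theorem} every point of $\Domain'$ lies in $U_x$ for some wake node during each length-$n$ time window; equivalently the fibers of $p|_{P_c}$ are nonempty, which is just the surjectivity already noted in the proof of the theorem. The subtle point is not surjectivity but the existence of a \emph{continuous} selection.

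The construction of $\tau$ proceeds by following a single wavefront as it sweeps $\Domain'$. I would orient each hallway in the direction of wave propagation and let $\tau$ increase monotonically along it, tracking the time at which the chosen wavefront covers each point. The essential local mechanism combines three facts: (i) two neighboring nodes $x,x'$ have wake-times differing by at most $1$, a consequence of continuity of $u_t|_{X'}$; (ii) their coverage sets overlap, because by hypothesis the convex hull $[x,x']$ is contained in $U_x\cup U_{x'}$; and (iii) the half-open time-slabs $[t,t+1)$ of two consecutive wake-times $t$ and $t+1$ glue to the full interval $[t,t+2)$. Consequently, on the overlap $U_x\cap U_{x'}$ the set of admissible covering times contains a genuine non-degenerate interval, which lets $\tau$ cross from the $U_x$-regime into the $U_{x'}$-regime continuously rather than jumping across a half-open boundary. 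Patching these local monotone pieces produces a continuous $\tau$ along each hallway and, by assigning a common value at each branch point where the wavefront arrives, across the junctions of the $1$-complex.

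The main obstacle is closing this construction up around the cycles of $\Domain'$: following a wavefront monotonically around a loop, the net increment of $\tau$ is $n$ times the degree of $u_t|_{X'}$ on that loop (Definition \ref{def:degree}), so the only way the locally-defined covering-time can fail to assemble into a single-valued continuous function is a nonzero winding, i.e. a defect. Here the defect-free hypothesis is precisely what kills the monodromy: by invariance of degree (Lemma \ref{lem:invariant degree}) and Theorem \ref{thm:die iff no defect} the degree on every loop of $\Domain'$ vanishes, the total increment around each loop is zero, and the selections agree where the sweep returns to its start, yielding a globally continuous $\tau$ and hence the sheet $S$. In the special case noted in the remark, where $u_t|_{X'}$ is the all-$0$ state at some time $t_1$, this whole argument collapses to the trivial section $\tau\equiv t_1$ with $S=\Domain'\times\{t_1\}$. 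I would expect the book-keeping of the half-open slabs at the overlaps, and the verification of continuity at branch points where several incoming wavefronts meet, to be the most delicate technical steps.
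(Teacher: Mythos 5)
Your proposal is correct and is essentially the paper's own argument: both proofs build $S$ as a continuous section of $p$ over $\Domain'$, extended edge-by-edge using exactly your two local facts (continuity of the state forces neighbors' wake times to differ by at most one, and the convex-hull coverage hypothesis lets the section pass from one node's time-slab into its neighbor's), with the defect-free hypothesis killing the monodromy around cycles --- which is precisely the paper's remark that the inductive lift of a cycle can fail to close up only at a defect. The differences are cosmetic rather than substantive: the paper does an inductive node-by-node lift after first shrinking to a subnetwork $X''$ whose convex hulls pairwise meet in at most one point, and then verifies surjectivity, injectivity, and continuity of $(p|_{S})^{-1}$ by hand, whereas your graph-of-$\tau$ packaging makes all three automatic; one caveat is that your ``monotone sweep by a single wavefront'' narrative is not literally available when waves entering $\Domain'$ from different sides collide (so $\tau$ has local maxima at collision loci rather than being monotone along hallways), but nothing in your local gluing or monodromy argument actually uses monotonicity, so the proof stands.
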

\begin{proof}
 First, reduce to a subnetwork $X''$ of $X'$ such that within $X''$ the convex hulls of neighbors is still enough to cover $\Domain'$, but any two distinct convex hulls intersect in at most one node. We then construct $S$ inductively from the empty set as follows:
\begin{enumerate}
  \item Select an integer time $t$ which is no earlier than $t_0$ big enough, such that every node in $X''$ has already been periodic for a long enough time. Pick a node $x\in X''(t)$ and add $(x,t)$ to $S$.
  \item For any neighbor of $x$ in $X''$, say $y$, there exists a continuous path lying in $P_c$, between $(x,t)$ and $(y,t_y)$, where $y\in X''(t_y)$ and $|t-t_y|\leq 1$ ($t_y$ is an integer time), which is mapped homeomorphically to the convex hull of $x$ and $y$ in $\Domain'$, because continuity holds on edge $[x,y]$, and $U_{x}\times[t,t+1)\cup U_{y}\times[t_y,t_y+1)$ is a path connected set. For any $x$'s neighbors $y$ that has not been visited, add $(y,t_y)$ with the continuous paths between $(x,t)$ and $(y,t_y)$ to $S$.
  \item Repeat step 2 for every newly visited node, until every node in a connected component of $X''$ has been visited.
\end{enumerate}
Such procedure could not be realized only if there is a cycle in $X''$, such that the continuous lift of the path to $P_c$ is not a loop, which means the state restricted on the loop is a defect. However there is no defect in $u_t(X'')$ when $t$ is big enough, by Lemma \ref{lem:invariant degree}. 
Therefore the procedure is well-defined. Start above procedures until every node in $X''$ has been visited.

Such an $S$ is mapped onto $\Domain'$ by $p$, because every convex hall of two neighbors, say $x$ and $y$, is mapped onto from the path between $(x,t_x)$ and $(y,t_y)$. The restriction of $p$ to $S$ is also injective, because every node and edge is only visited once, and $p$ restricted on every continuous path between $(x,t_x)$ and $(y,t_y)$ is homeomorphism.

The only thing left to be proved is that there exist a continuous inverse of $p|_{S}$. Let $f$ be a map from $\Domain'$ to $S$, such that $f$ maps every node $x$ in $X''$ to $(x,t_x)$ in $S$, and maps every edge between node $x$ and $y$ to the continuous path between $(x,t_x)$ and $(y,t_y)$. Such $f$ is an inverse of $p|_{S}$, and is continuous: for a point in $\Domain'$ that is not a node, it's covered by a convex hall of two neighboring nodes in $X''$, thus its small neighborhood maps to the lift of the the convex hall in $S$ continuously; for a node point $x$ in $X''$, its neighborhood maps to a neighborhood of the lift $(x,t_x)$ homeomorphically, by the procedure of constructing $S$. Thus $f$ is an continuous inverse of $p$ restricted on $S$, thus $p$ induces a homeomorphism from $S$ to $\Domain'$.
\begin{figure}
\centering
\epsfig{file=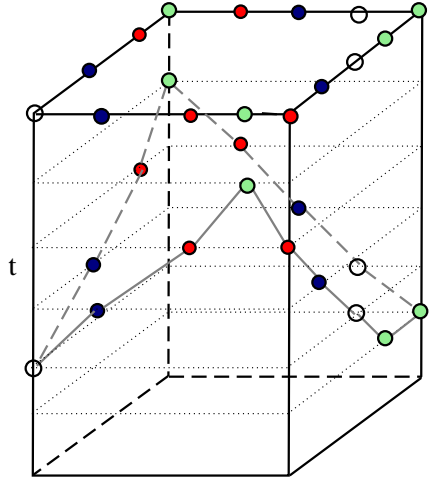}
\caption{Product space and $S$: space $\Domain'$ on top, time increases from top to bottom. State space is $\mathbb{Z}_4$, with white for state 0, light green for state 3, red for state 2, and dark blue for state 1. Gray curve represent $S$. $S\cong \Domain'$ by $p$. Continuous curves connecting top and bottom without intersecting $\bar{S}$ do not exist.}
\end{figure}
\end{proof}

\section{Main theorem and proofs}
\label{sec:proof}

\subsection{Assumptions}

The main theorem for this paper, Theorem \ref{thm:main theorem}, shows that any evader in the evasion game on a narrow hallway space $\Domain\subset\real^2$ will lose, given the appropriate assumptions about the density of the network $X$ and the initial condition. Specifically, we assume:
\begin{enumerate}
\item the projection from Rips complex $\Rips_r(X)$ to $\Domain$ preserves homotopy type;
\item each sensor $x\in X$ covers a convex set $U_x\subset\Domain$ around its location;
\item the convex hull of sensors that are pairwise neighbors is covered by the union of coverage of those sensors;
\item there is at least one seed in the initial condition.
\end{enumerate}

According to a theorem of ~\cite{Chambers_vietoris-ripscomplexes}, we will be able to build the correspondence between the Rips complex $\Rips_r(X)$ of a planar point set and its shadow $S(\Rips_r(X))$ in $\mathbb{R}^2$.

\begin{theorem}
\label{thm:iso shadow}
[ \cite{Chambers_vietoris-ripscomplexes}]
For any set of points in $\mathbb{R}^2$, $\pi_1(\Rips_r(X))\rightarrow\pi_1(S(\Rips_r(X)))$ is an isomorphism.
\end{theorem}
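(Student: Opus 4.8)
The plan is to exhibit the shadow projection $\pi : \Rips_r(X) \to S(\Rips_r(X))$, which sends each simplex to the convex hull of its vertices, as both $\pi_1$-surjective and $\pi_1$-injective. Throughout I would reduce to edge-loops, since $\pi_1$ of a complex is carried by its $2$-skeleton and every loop is homotopic to one lying in the $1$-skeleton; correspondingly I would work with the $1$-skeleton of the shadow, which is the union of the images of the Rips edges.

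For surjectivity, I would take an arbitrary loop in $S(\Rips_r(X))$, perturb it into generic position so that it meets the images of the $2$-simplices transversally, and then push it off onto the boundary $1$-skeleton of the shadow. Each such boundary segment is the image under $\pi$ of a Rips edge, so the resulting edge-loop lifts to $\Rips_r(X)$ and maps to a loop homotopic to the original. This gives surjectivity with no planar input beyond transversality.

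Injectivity is the substantive half, and the only place the hypothesis ``in $\real^2$'' is used. I would take an edge-loop $\gamma$ in $\Rips_r(X)$ with $\pi(\gamma)$ null-homotopic in the shadow, so that $\pi(\gamma)$ bounds a disk in the plane, and argue by induction on a complexity measure of $\gamma$ (for instance the number of transverse crossings among its constituent edges together with the number of sample points it encloses). The geometric engine is a planar crossing inequality: if $[a,b]$ and $[c,d]$ are Rips edges crossing at a point $z$, then since $|az|+|zb|=|ab|\le r$ and $|cz|+|zd|=|cd|\le r$, the triangle inequality gives $|ac|+|bd|\le |ab|+|cd|\le 2r$ and likewise $|ad|+|bc|\le 2r$, forcing at least one short diagonal in each pair. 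These forced edges, together with a short case analysis, supply $2$-simplices of $\Rips_r(X)$ across which $\gamma$ can be rerouted and uncrossed while remaining inside the complex. Iterating the uncrossing and then collapsing the resulting embedded loop across the triangles it bounds contracts $\gamma$ in $\Rips_r(X)$.

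The hard part will be converting the crossing inequality into genuine simplicial fillings: a single short diagonal need not on its own close up a triangle, so the bookkeeping must track which of the forced edges completes a $2$-simplex and must verify that each uncrossing move strictly decreases the chosen complexity, so that the induction terminates. The degenerate cases, where edges cross at or arbitrarily near a sample point, would require a separate generic perturbation. I expect essentially all the difficulty to reside here; the analogue of the crossing inequality fails in $\real^3$, which is precisely why the theorem is confined to the plane.
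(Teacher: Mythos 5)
First, note what you are comparing against: the paper does not prove this statement at all. Theorem~\ref{thm:iso shadow} is imported verbatim, with citation, from Chambers--de Silva--Erickson--Ghrist \cite{Chambers_vietoris-ripscomplexes}, so the only meaningful comparison is with the proof in that reference. Measured against it, your plan reconstructs the genuine strategy: reduction to edge-loops, the shadow projection, and---most importantly---the planar crossing inequality, which is precisely the key lemma of \cite{Chambers_vietoris-ripscomplexes}. Your derivation of it is correct: if Rips edges $[a,b]$ and $[c,d]$ cross at $z$, then $|ac|+|bd|\leq|ab|+|cd|\leq 2r$ and $|ad|+|bc|\leq 2r$, and the four-way case analysis shows that one of the four points lies within $r$ of the other three; since the Rips complex is a flag complex, that vertex spans a $2$-simplex with the opposite edge, across which one of the two edges can be rerouted to remove the crossing.

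However, the two places where you defer the work are not bookkeeping; they are the theorem. (a) \emph{Termination of the uncrossing induction.} Rerouting $[c,d]$ to $[c,a]\cdot[a,d]$ kills the crossing with $[a,b]$, but each new edge may cross many edges of the loop that the old edge did not, so the transverse-crossing count can strictly increase; you have not produced a complexity measure that decreases, and supplying one (or replacing the naive induction by a subtler global argument) is what consumes most of the cited paper. (b) \emph{The filling step}, where your use of the hypothesis is wrong as written: you pass from ``$\pi(\gamma)$ is null-homotopic in the shadow'' to ``$\pi(\gamma)$ bounds a disk in the plane.'' The latter is vacuous---every planar loop bounds a disk in $\real^2$---and is not what the collapse needs. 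What is needed is that the Jordan domain $D$ of the (now embedded) loop lie \emph{inside the shadow}; that is exactly what null-homotopy in $S(\Rips_r(X))$ gives (and it is preserved by your uncrossing moves, since homotopies in the complex project to homotopies in the shadow), whereas null-homotopy in $\real^2$ gives nothing. Without it the final step fails outright: for five points at the vertices of a regular pentagon of side $r$, the $5$-cycle bounds a disk in the plane, bounds no $2$-simplices, and is essential in both $\Rips_r(X)$ and its shadow. Even with $D\subset S(\Rips_r(X))$, collapsing ``across the triangles it bounds'' requires its own induction (on enclosed sample points, with an ear-finding argument). A smaller remark of the same kind: your surjectivity step is not ``transversality only,'' because a path in the shadow of the $1$-skeleton may turn at a crossing point of two edge-images, and lifting that turn to the complex again requires the forced short edges of the crossing lemma. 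So: right strategy, correct key lemma, but what you leave open is exactly the content of the cited proof---which, for this one statement, is a paper-length argument.
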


\begin{definition}
\label{def:local hole}
A \style{local hole} in the Rips complex is a non zero element of $\pi_1(\Rips_r(X))$ that has trivial projection in $\pi_1(\Domain)$.
\end{definition}

In the sense of local holes, Theorem \ref{thm:iso shadow} is saying that in our case, the Rips complex $\mathcal{R}_{r}(X)$ has no local hole if and only if its shadow $S(\Rips_r(X))$ has no local hole.

Another useful fact is that with very high probability, when the network is dense enough, the Rips complex $\mathcal{R}_{r}(X)$ has no local holes ~\cite{Niyogi04findingthe}. Therefore, if with enough sensors uniformly distributed in the domain and with high probability, the Rips complex $\mathcal{R}_r(X)$, and its shadow $S(\Rips_r(X))$ both have no local hole.

\subsection{Wave propogation}

\begin{definition}
\label{def:boundary node}
A \style{boundary path} along a boundary component, is defined as a simple path such that every node on the path has a coverage that intersects with the corresponding boundary, and the intersection of the coverage of every two neighbors, $x$ and $y$ on the path, also intersects the boundary nontrivially.  A boundary of a network $X$, $\partial X$ on $\Domain$ is a collection of boundary paths, one with each component of $\partial\Domain$. Refer to Figure \ref{fig:waveproof} for illustration.
\end{definition}

\begin{definition}
For positive integer set $A$, define the \style{depth $A$} nodes, $X_{A}$, as the set of all the nodes that are with depth $k\in A$ in the directed forest $\mathcal{F}$ built on the network.
\end{definition}

\begin{definition}
A connected sub network $X'$ of $X$ makes a \style{barrier}, if there exist a piece of hallway $\tilde{\Domain}$, which intersects $\partial\Domain$ at $\partial\tilde{\Domain}$, and the composition $\partial\circ i_*: H_1(\tilde{\Domain},\partial\tilde{\Domain})\rightarrow H_0(\partial\Domain)$ of $i_*: H_1(\tilde{\Domain},\partial\tilde{\Domain})\rightarrow H_1(\Domain,\partial\Domain)$ and $\partial: H_1(\Domain,\partial\Domain)\rightarrow H_0(\partial\Domain)$ is an injection, such that $X'$'s coverage contains at least one element in a nonzero class of $H_1(\tilde{\Domain},\partial\tilde{\Domain})$. In other words, it covers a region that divides the hallway locally and transversally as in Figure \ref{fig:barrier}.
\begin{figure}
\centering
\epsfig{file=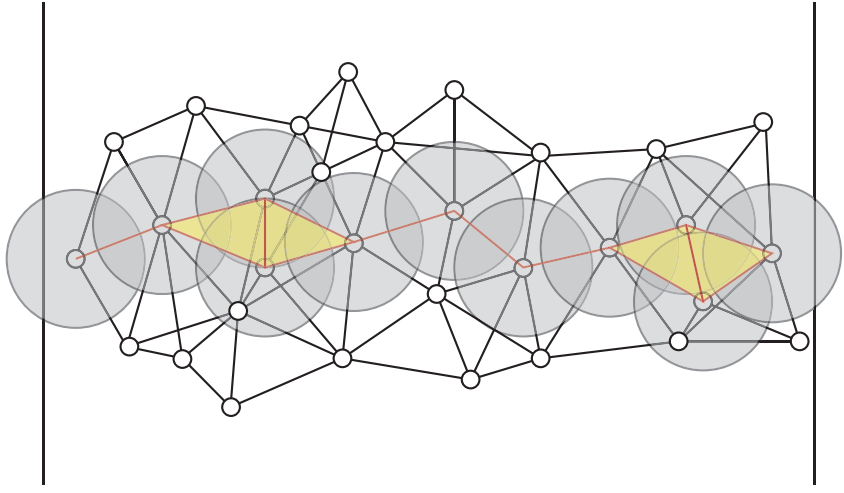,scale=0.8}
\caption{A piece of hallway with a subnetwork that is a barrier.}
\label{fig:barrier}
\end{figure}
\end{definition}

\begin{theorem}
\label{thm:wave propagate}
Let $X$ be a connected and compact network on a narrow hallway space $\Domain$, running under GHM, whose initial condition contains at least one seed; if there is a moment that the subnetwork $X'$ in a sub domain $\Domain'$ is continuous and contains no defect,
and boundary paths $\partial X'$ exists, then if at time $t$, there is a wavefront that makes a barrier, which is not supported on any end leaves of the forrest $\mathcal{F}$, then there would be a wavefront also makes a barrier at time $t+1$.
\end{theorem}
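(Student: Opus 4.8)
The plan is to track the state-$0$ wavefront through a single application of $\Update$ by means of the subordination forest $\mathcal{F}$, and to show that the topological ``cutting'' property of a barrier is carried along the parent--child edges of $\mathcal{F}$, hence preserved in relative homology. Invariance of degree (Lemma \ref{lem:invariant degree}) underlies everything, since it certifies that the continuity and defect-free structure used to build $\mathcal{F}$ persist under $\Update$.

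First I would pass to the continuous, defect-free, periodic regime supplied by the hypotheses: on $\Domain'$ the state is eventually continuous with no defect, so by Corollary \ref{cor:subordination hold} and Corollary \ref{cor:continuity holds} every node is $n$-periodic and advances by exactly one each step, $u_{t+1}(x)=u_t(x)+1 \pmod n$. In this regime $\mathcal{F}$ encodes the phase: along each directed edge the head (the deeper, child node) carries a state one less than its tail (parent). Hence if $x$ lies in the time-$t$ wavefront, $u_t(x)=0$, then each parent of $x$ sits in state $1$ and each child in state $n-1$. Because the barrier is \emph{not} supported on any end leaf, every barrier node $x$ has at least one child $z$, and $u_{t+1}(z)=0$. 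I would then take the candidate time-$(t+1)$ wavefront $X'_{t+1}$ to be the collection of these children, which are precisely the new state-$0$ nodes adjacent to the old wavefront.

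The core step is to show $X'_{t+1}$ again makes a barrier, i.e. that its coverage contains a nonzero class of the \emph{same} $H_1(\tilde{\Domain},\partial\tilde{\Domain})$. Let $\gamma_t$ be a curve in the coverage of the old wavefront realizing a nonzero class there. For each barrier node $x$ and chosen child $z$, the pair $x,z$ are neighbors, so by assumption (3) the convex hull of $\{x,z\}$ lies in $U_x\cup U_z\subset\Domain$; the union of these hulls over the wavefront forms a strip $R$, read as a relative $2$-chain whose boundary satisfies $\partial R = \gamma_t-\gamma_{t+1}$ modulo a chain on $\partial\tilde{\Domain}$ supplied by the boundary paths $\partial X'$. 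Thus $[\gamma_{t+1}]=[\gamma_t]$ in $H_1(\tilde{\Domain},\partial\tilde{\Domain})$, where $\gamma_{t+1}$ lies in the coverage of $X'_{t+1}$. Since the composite $\partial\circ i_*$ in the barrier definition is injective and $[\gamma_t]\neq 0$, we get $[\gamma_{t+1}]\neq 0$, so $X'_{t+1}$ makes a barrier.

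The hard part will be guaranteeing that the strip $R$ genuinely realizes a relative cycle in $\tilde{\Domain}$: that the new wavefront stays connected and anchored to both components of $\partial\tilde{\Domain}$, and that $R$ neither wraps around nor leaks through a spurious hole. This is exactly where the density hypotheses enter. By Theorem \ref{thm:iso shadow} together with assumption (1), the Rips complex and its shadow have no local hole, so the region swept by the advancing wave encloses no extraneous topology and the cutting class cannot be broken; meanwhile the boundary paths $\partial X'$ keep the endpoints of the wavefront sliding along $\partial\Domain$ rather than retracting into the interior. The ``not on end leaves'' hypothesis is the other indispensable ingredient, since it is what supplies deeper nodes for the wave to occupy at time $t+1$ instead of terminating. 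I expect verifying connectivity of $X'_{t+1}$ from connectivity of $X'_t$ — i.e. that children of adjacent barrier nodes remain joined — to be the most delicate point, and I would discharge it using the no-local-hole property to rule out the wave fragmenting as it crosses $R$.
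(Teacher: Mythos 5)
Your reduction to the periodic regime is sound: in the continuous, defect-free region all nodes are eventually $n$-periodic, each directed edge of $\mathcal{F}$ drops the state by one from parent to child, so the children of the time-$t$ barrier nodes are exactly in state $n-1$ at time $t$ and hence in state $0$ at time $t+1$, and the ``not on end leaves'' hypothesis guarantees those children exist. This matches the role the forest plays in the paper. The gap is in your core step. The set $R$ you build is a union of convex hulls of parent--child \emph{pairs}, i.e.\ a union of segments (thickened at best to $U_x\cup U_z$ for each pair); it is not a $2$-chain interpolating between two curves, so the equation $\partial R=\gamma_t-\gamma_{t+1}$ has no content as written. Worse, the curve $\gamma_{t+1}$ whose class you want to compute is not known to exist: children of adjacent barrier nodes need not be adjacent, nor need their coverages overlap, so the children's coverage may a priori be a disconnected union of spikes containing no transversal curve at all. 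You flag this yourself as ``the most delicate point,'' but the appeal to the no-local-hole property does not discharge it --- injectivity of $\pi_1(\Rips_r(X))\rightarrow\pi_1(\Domain)$ says nothing about whether the particular subnetwork of depth-$(k+1)$ nodes is connected across the hallway. Since the existence and connectivity of the new barrier is precisely the content of the theorem, the proposal assumes what it must prove.

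The paper's proof avoids this node-by-node construction entirely. It splits the complex by depth, $A=$ subcomplex on $X_{\leq k+1}$ and $B=$ subcomplex on $X_{\geq k+1}$, so that $A\cap B$ is the depth-$(k+1)$ subcomplex and $A\cup B$ is everything (the latter uses continuity: vertex depths across any simplex differ by at most one, by Corollary \ref{cor:continuity holds}). It then runs the relative Mayer--Vietoris sequence
\begin{equation*}
H_{1}(A\cap B,\partial X')\stackrel{\phi}{\longrightarrow}H_{1}(A,\partial X')\oplus H_{1}(B,\partial X')\stackrel{\psi}{\longrightarrow}H_{1}(A\cup B,\partial X')
\end{equation*}
with $[\alpha]$ the old barrier class in $A$ and $[\beta]$ a transversal class in $B$ (this is where ``not on end leaves'' and density enter). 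Since both are transversal cuts of the hallway piece, $\psi([\alpha],[\beta])=0$, so by exactness $([\alpha],[\beta])=\phi([\gamma])$ for some relative class $\gamma$ supported on $A\cap B$; because $\phi$ is induced by inclusion, $\gamma$ is itself a path of depth-$(k+1)$ nodes joining the two boundary paths. In other words, the connectivity you could not establish by hand is delivered for free by exactness of the sequence. If you want to salvage your constructive approach, you would need an independent argument that the state-$(n-1)$ nodes between the two sides of the hallway form a connected separating family --- which is, in effect, re-proving this Mayer--Vietoris step.
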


\begin{proof}
If there is a wavefront of nodes with depth $k$ at that makes a barrier, then we want to prove that a wavefront of nodes with depth $k+1$ exist which also makes a barrier. Let $A$ denote the subcomplex on subnetwork $X_{\leq k+1}$, and let $B$ denote the subcomplex on subnetwork $X_{\geq k+1}$. Then $A\cap B$ is precisely the subcomplex with nodes with depth $k+1$. On the other hand, $A\cup B$ is the whole complex, because for every simplex in the whole complex, their vertices are pairwise neighbors, so by continuity of states on $X'$ (by the assumption that there is a moment that the subnetwork $X'$ is in state zero), their depths could only differ at most by 1, by Corollary \ref{cor:continuity holds}, which means the simplex is either in $A$ or in $B$. The Mayer-Vietoris sequence for $A$ and $B$ gives:
\begin{equation}
\begin{aligned}
    H_{1}(A\cap B, \partial X')& \stackrel{\phi}{\longrightarrow}H_{1}(A,\partial X')\oplus H_{1}(B,\partial X')\\& \stackrel{\psi}
        {\longrightarrow} H_1(A\cup B, \partial X')
\end{aligned}
\end{equation}
Let $[\alpha]\in H_{1}(A,\partial X')$, $[\beta]\in H_{1}(B,\partial X')$, where $\alpha$ and $\beta$ are both connecting boundary nodes of different sides. Such an $\alpha$ exists because of the existence of a previous wavefront of depth $k$, and $\beta$ exists because the network is sufficiently dense in $\Domain$, and $\alpha$ is not supported on any end leaves of $\mathcal{F}$. Then $\psi([\alpha],[\beta])=0$ (if not, let $\beta$ be of opposite orientation) in $H_1(A\cup B, \partial X')$, because first homology of $A\cup B$ is trivial. Therefore $\psi([\alpha],[0])$ and $\psi([0],[\beta])$ are homologous. Thus $([\alpha],[\beta])\in \ker \psi$. By exactness, $\ker \psi = \mathrm{im}\ \phi$, thus there exist a $\gamma$, such that $\phi([\gamma]) = ([\alpha], [\beta])$. As $\phi$ is induced by inclusion maps, $\gamma$ has to be a path connecting boundaries of two sides, which is a wavefront of depth $k+1$, {\em cf.} Figure \ref{fig:waveproof}. Therefore, by induction, barrier-inducing wavefronts of every depth exist.
\begin{figure}
\centering
\epsfig{file=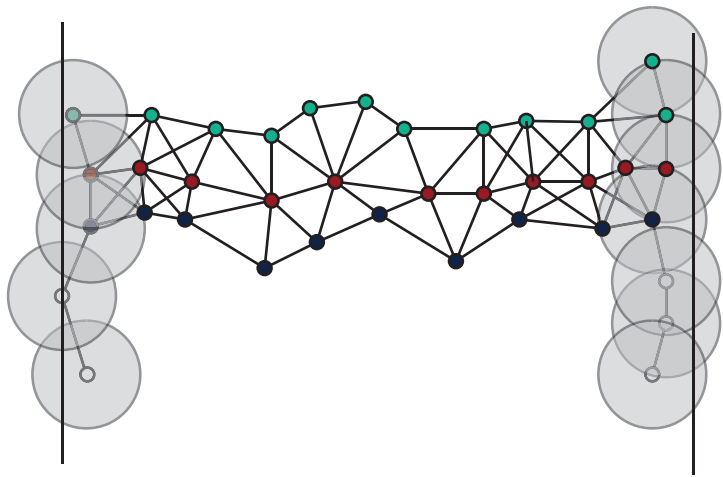}
\caption{Wavefronts propagation: light green nodes are with depth $k-1$, red nodes are with depth $k-1$, dark blue nodes are with depth $k+1$; Boundaries of the domain are covered by boundary paths.}
\label{fig:waveproof}
\end{figure}
\end{proof}

The above wave propagation theorem presents how waves travel along the hallways, but did not mention the generation of waves. The following proposition would explain how a first wave is generated under same assumptions.

\begin{proposition}
\label{prop:first wave}
With the assumptions of Theorem \ref{thm:wave propagate}, at least one wavefront that is a barrier and intersects with boundary paths on both sides must be generated.
\end{proposition}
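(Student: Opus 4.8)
The plan is to recognize Proposition~\ref{prop:first wave} as the base case that anchors the induction in Theorem~\ref{thm:wave propagate}: the propagation theorem produces a depth-$(k{+}1)$ barrier from a depth-$k$ one, and here I must exhibit the very first barrier-making wavefront. First I would invoke the spanning-forest structure rooted at the seed, together with Definition~\ref{def:depth}, to equip every node of $X'$ with a depth, and recall from Corollary~\ref{cor:continuity holds} that on the continuous, defect-free region $X'$ the depths of two neighbors differ by at most $1$; hence each depth level set $X_m = X_{\le m}\cap X_{\ge m}$ is exactly the overlap appearing in the Mayer--Vietoris decomposition. Next I would fix the ambient homology: by assumption the projection $\Rips_r(X')\to\Domain'$ preserves homotopy type and, by Theorem~\ref{thm:iso shadow} and Definition~\ref{def:local hole}, there are no local holes, so $H_1(X',\partial X')\cong H_1(\Domain',\partial\Domain')\cong\zed$, generated by the transversal cross-cut (barrier) class $g$; density then guarantees that $g$ is realized by an actual subnetwork cross-cut joining the two boundary paths of $\partial X'$.

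The heart of the argument is to localize a representative of $g$ onto a single depth. I would let $m$ be the smallest depth for which the sublevel complex $X_{\le m}$ already contains a cross-cut $\alpha$ joining the two boundary paths; such an $m$ exists because $X_{\le M}=X'$ for $M$ the maximal depth, and $X'$ carries $g$. Because the seed is the root of $\mathcal{F}$ and feeds the wave into the hallway, this minimal $m$ is attained near the seed end, so the superlevel complex $X_{\ge m}$ still covers the remainder of the hallway and, by density, contains a cross-cut $\beta$ joining the two boundary paths as well. With $A=X_{\le m}$, $B=X_{\ge m}$, $A\cap B=X_m$ and $A\cup B=X'$, I would then run the same Mayer--Vietoris sequence as in Theorem~\ref{thm:wave propagate}: since $[\alpha]$ and $[\beta]$ both represent the generator $g$ of $H_1(X',\partial X')$, after reorienting $\beta$ their image $\psi([\alpha],[\beta])$ vanishes, so $([\alpha],[\beta])\in\ker\psi=\mathrm{im}\,\phi$; a class $[\gamma]$ with $\phi([\gamma])=([\alpha],[\beta])$ is supported on $X_m$ and, being carried by the inclusions onto both barriers, is itself a cross-cut. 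Thus $X_m$ is a single-depth wavefront that makes a barrier meeting both boundary paths, which is precisely the asserted first wave.

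The main obstacle is the localization step, namely guaranteeing a common depth $m$ at which both $X_{\le m}$ and $X_{\ge m}$ separately span the hallway; this is where the hypotheses genuinely enter. The seed (through the spanning-forest theorem and Theorem~\ref{thm:die iff no defect}) is what forces a wave to be generated at all rather than the state dying out, and it pins the minimal spanning depth near the seed end so that $X_{\ge m}$ remains large enough to contain its own cross-cut. A secondary point I would make careful is the dictionary between a depth level set and a genuine time-$t$ state-$0$ wavefront: since depth advances by one per step of $\Update$ along subordination edges, $X_m$ coincides with the state-$0$ nodes at an appropriate time, so the barrier is literally generated by the dynamics rather than being a static homological artifact. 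I would close by noting that $X_m$ is not supported on the end leaves of $\mathcal{F}$, so it satisfies the hypothesis of Theorem~\ref{thm:wave propagate} and the induction there can take over.
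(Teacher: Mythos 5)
Your proposal is correct and takes essentially the same route as the paper's own proof: both run the spanning-forest depth filtration and then invoke the Mayer--Vietoris argument of Theorem~\ref{thm:wave propagate} at a critical depth to extract a cross-cut supported on a single depth level, i.e.\ the first barrier. The only real difference is the criterion for selecting that depth --- the paper takes the first $k_0$ at which the sublevel complexes $\Rips_r(X_{\leq k})$ become (and remain) connected and then joins two boundary nodes of $X_{\{k_0\}}$, whereas you take the minimal $m$ at which $X_{\leq m}$ carries the relative cross-cut class --- and the step you flag as the main obstacle (that $X_{\geq m}$ also contains a cross-cut) is resolved in the paper by exactly the same density appeal it uses for $\beta$ in Theorem~\ref{thm:wave propagate}.
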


\begin{proof}
Let $X_{\leq k}$ be the set of nodes with depth less than or equal to $k$. Then there is a filtration of Rips complexes:
\begin{displaymath}
\mathcal{R}_r(X_{\{0\}})\subset \mathcal{R}_r(X_{\leq 1}) \subset \dots \subset \mathcal{R}_r(X_{\leq k}) \subset \dots \mathcal{R}_r(X)
\end{displaymath}
Since they grow by attaching nodes within communication distance as $k$ increases, therefore, as $\mathcal{R}_r(X)$ is connected, there has to be a $k_0$, such that $\mathcal{R}_r(X_{\leq k})$ are all connected for $k\geq k_0$.
For two boundary nodes of $X_{\{k_0\}}$, if they belong to boundary paths near different boundaries, since they are connected, and for the same argument from Theorem \ref{thm:wave propagate} by using the Mayer-Vietoris sequence, they are connected by a path with nodes from $X_{\{k_0\}}$. This path generates a wavefront that is a barrier.
\end{proof}

The above results not only explain why the evader has to lose the evasion game, but also explain the behaviors of the wavefronts seen in simulations. After the first several steps, the nodes far away from the seed are all turned on, until wavefronts generated by the seeds reach them. The movements of wavefronts are verified to be moving away from seeds, and they provide locally separating barriers, as observed. Another significant property we observe from simulations is that the wavefronts make turns when reaching a corner, as shown in Figure \ref{fig:corner}. This reminds again that the behavior of the system does only depend on topology, not geometry, of the underlying space.

\begin{figure}
\centering
\epsfig{file=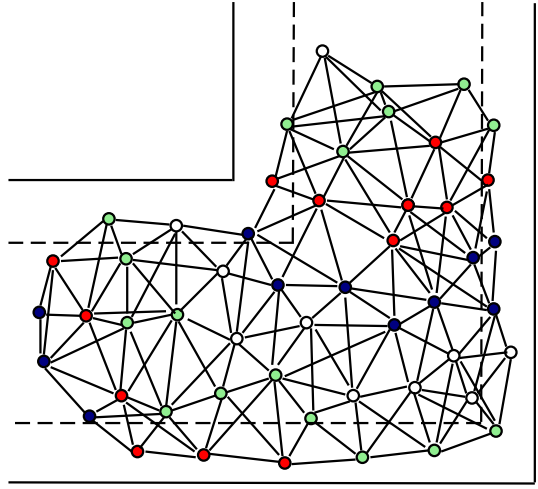}
\caption{corner of a hallway: with state space $\mathbb{Z}_4$, white for state 0, light green for state 3, red for state 2, and dark blue for state 1. The outer side boundary path have more nodes than the inner boundary path, but more nodes stay in the same states: four in light green and three in red. Thus the wavefronts propagate from vertical to horizontal.}
\label{fig:corner}
\end{figure}

\subsection{Main theorem}

For now, we will start argue that under certain conditions, evader will always lose the evasion game.

\begin{definition}
For $x\in X$, define the \style{open star} \cite{hatcher} of $x$, $\tilde{U}_x\subset \Rips_r(X)$, as the union of $x$ and all open simplices with $x$ as one vertex.
\end{definition}

\begin{lemma}
\label{lem:escape in one simplex}
Let $\sigma$ be a $d$-simplex in $\Rips_r(X)$. If there is a continuous function $f: [t_1,t_2)\rightarrow S(\sigma)$, such that
$f(t)\notin \bigcup_{x\in X(t)\cap\sigma}U_x, \ \forall t\in[t_1,t_2)$, then there exists a continuous function $\tilde{f}: [t_1,t_2)\rightarrow \sigma$, such that $\tilde{f}(t)\notin \bigcup_{x\in X(t)\cap\sigma}\tilde{U}_x, \ \forall t\in[t_1,t_2)$.
\end{lemma}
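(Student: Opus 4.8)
The plan is to produce $\tilde f$ as the composition of $f$ with a single, \emph{time-independent} ``nerve map'' $N\colon S(\sigma)\to\sigma$ built from a partition of unity subordinate to the coverage sets of the vertices of $\sigma$. The guiding observation is that although the awake set $X(t)\cap\sigma$ changes (discretely, at integer times), it never enters the \emph{construction} of $\tilde f$: it appears only in the pointwise verification of the avoidance condition. Consequently continuity of $\tilde f$ is immediate from continuity of $N$ and of $f$, and I never have to glue across the integer times where the awake set jumps.

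First I would set up the cover. The vertices of $\sigma$ are pairwise neighbors, so by the coverage assumption (the convex hull of pairwise neighbors is covered by the union of their coverage sets) the shadow satisfies $S(\sigma)=\bigcup_{x\in\sigma}U_x$, where $S(\sigma)$ is as in Definition \ref{def:shadow}. Viewing the $U_x$ as an open cover of the compact set $S(\sigma)$, choose a partition of unity $\{\rho_x\}_{x\in\sigma}$ strictly subordinate to it: continuous $\rho_x\colon S(\sigma)\to[0,1]$ with $\sum_x\rho_x\equiv 1$ and $\rho_x(p)=0$ whenever $p\notin U_x$. Reading $(\rho_x(p))_{x\in\sigma}$ as barycentric coordinates defines a point $N(p)\in\sigma$ depending continuously on $p$, and I set $\tilde f:=N\circ f$, which is continuous on $[t_1,t_2)$.

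Next I would verify the avoidance property, using the identification of the open star $\tilde U_x$ (restricted to $\sigma$) with the set of points of $\sigma$ whose $x$-barycentric coordinate is strictly positive. Fix $t$ and let $x\in X(t)\cap\sigma$ be awake. By hypothesis $f(t)\notin U_x$, so strict subordination forces $\rho_x(f(t))=0$; hence the $x$-coordinate of $\tilde f(t)=N(f(t))$ vanishes and $\tilde f(t)\notin\tilde U_x$. Since this holds for every awake vertex, $\tilde f(t)\notin\bigcup_{x\in X(t)\cap\sigma}\tilde U_x$, which is exactly the required conclusion.

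The main thing to be careful about — and essentially the only genuine obstacle — is securing a partition of unity that is \emph{strictly} subordinate to $\{U_x\}$, so that ``$p\notin U_x$'' forces $\rho_x(p)=0$ while the normalizing denominator $\sum_x\rho_x$ never vanishes on $S(\sigma)$. The nonvanishing is precisely where the coverage assumption is used, since it guarantees every $p\in S(\sigma)$ lies in some $U_x$. With the $U_x$ taken open (as is natural for sensing footprints) the shrinking/partition-of-unity construction is standard; if one insists on closed convex $U_x$ one must additionally rule out points lying on the topological boundary of every set that covers them, e.g.\ via a mild genericity or good-cover hypothesis. I would finally emphasize that the whole argument is local to $\sigma$ and uses no GHM dynamics beyond the coverage assumption, which is exactly what allows the same map $N$ to serve simultaneously for all $t$ despite the awake set varying in time.
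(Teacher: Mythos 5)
Your proposal is correct, and it shares the paper's skeleton: both proofs produce $\tilde f$ by composing $f$ with a single \emph{time-independent} map $S(\sigma)\to\sigma$ whose preimage of each open star $\tilde U_x$ is contained in $U_x$, so that the time-varying awake set $X(t)\cap\sigma$ enters only in the pointwise verification. The difference lies in how that map is obtained. The paper (treating only the case $d=2$) asserts the existence of a homotopy equivalence $h$ of $S(\sigma)$ with itself that carries the interior of $U_{x_0}\cap U_{x_1}\cap U_{x_2}\cap S(\sigma)$ onto the open $2$-cell and whose preimages of open edges lie in the corresponding double intersections; no construction is given, and the homotopy-equivalence property is never actually used --- only the containment $h^{-1}(\tilde U_{x_i})\subset U_{x_i}$ matters. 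Your partition-of-unity (nerve) map supplies exactly that containment explicitly, since $\rho_x(p)>0$ forces $p\in U_x$, and it does so uniformly for every $d$-simplex, matching the lemma as stated rather than just the $d=2$ case the paper writes out. You also identify the one genuine technical hypothesis --- that the sets $U_x$ be relatively open in $S(\sigma)$ (or that some genericity condition hold for closed coverage sets), so that a strictly subordinate partition of unity with nonvanishing normalizer exists; the paper's asserted $h$ requires the same kind of hypothesis implicitly, so your version is, if anything, more candid about what the coverage assumption must furnish. In short: same strategy, but your realization of the key map is constructive and more general, at the cost of making the openness assumption explicit.
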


\begin{proof}
For a 2-simplex $\sigma=[x_0,x_1,x_2]$. There exist a homotopy equivalence $h$ from $S(\sigma)$ to itself, such that the interior of $\bigcap_{i=0,1,2}U_{x_i}\cap S(\sigma)$ is mapped onto the interior of $S(\sigma)$, which is $\bigcap_{i=0,1,2}\tilde{U}_{x_i}$, and the inverse image of every open edge $S([x_i,x_j])$ belongs to $U_{x_i}\cap U_{x_j}\cap S(\sigma)$. Therefore, we can construct $\tilde{f}$ as $h\circ f$, with the property that $h^{-1}(\bigcap_{i\in A}\tilde{U}_{x_i})\subset \bigcap_{i\in A}U_{x_i}$, which induces $\tilde{f}(t)\notin \bigcup_{x\in X(t)\cap\sigma}\tilde{U}_x, \ \forall t\in[t_1,t_2)$.
\end{proof}

\begin{lemma}
\label{lem:escape in complex}
For the Rips complex $\Rips_r(X)$, if there exists a continuous function $f: [t_0,\infty)\rightarrow S(\Rips_r(X))$, such that $f(t)\notin \bigcup_{x\in X(t)} U_x, \ \forall t\in[t_0,\infty)$, then there exists a continuous function $\tilde{f}: [t_0,\infty)\rightarrow \Rips_r(X)$, such that $\tilde{f}(t)\notin \bigcup_{x\in X(t)} \tilde{U}_x, \ \forall t\in[t_0,\infty)$.
\end{lemma}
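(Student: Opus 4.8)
The plan is to globalize Lemma~\ref{lem:escape in one simplex} by patching together the single-simplex lifts along a time subdivision adapted to $f$. First I would record that the shadow $S(\Rips_r(X))$ is the union of the finitely many convex cells $S(\sigma)$, one per simplex $\sigma$, and that $f$ avoids $\bigcup_{x\in X(t)}U_x$ at every time. A key preliminary observation makes the reduction to one simplex meaningful: if a point lies in a closed simplex $\sigma$, its carrier is a face of $\sigma$, so it can belong to an open star $\tilde U_x$ only when $x$ is a vertex of $\sigma$. Hence on $\sigma$ the union $\bigcup_{x\in X(t)}\tilde U_x$ coincides with $\bigcup_{x\in X(t)\cap\sigma}\tilde U_x$, so avoidance computed inside a single simplex already yields avoidance of \emph{all} open stars.

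On a compact subinterval $[t_0,T]$, uniform continuity of $f$ together with the finiteness of the complex lets me choose a partition $t_0=s_0<s_1<\dots<s_m=T$, including all integer times, so that on each $[s_j,s_{j+1}]$ the image $f([s_j,s_{j+1}])$ lies in a single cell $S(\sigma_j)$ and the awake set $X(t)$ is constant (recall $X(t)$ changes only at integer times). On each piece $f$ is thus a coverage-avoiding path inside one simplex shadow with a fixed awake subset, which is exactly the hypothesis of Lemma~\ref{lem:escape in one simplex}; applying it yields continuous $\tilde f_j:[s_j,s_{j+1}]\to\sigma_j$ with $\tilde f_j(t)\notin\bigcup_{x\in X(t)\cap\sigma_j}\tilde U_x$. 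The heart of the argument is to glue these into a single continuous $\tilde f$, i.e.\ to force $\tilde f_j(s_{j+1})=\tilde f_{j+1}(s_{j+1})$. At a transition time $f(s_{j+1})$ lies in $S(\sigma_j)\cap S(\sigma_{j+1})$; I would choose the homotopy equivalences $h_\sigma$ of Lemma~\ref{lem:escape in one simplex} compatibly, each $h_\sigma$ restricting on the shadow of a face to the $h$ of that face, so that the two local lifts agree whenever the overlap is a genuine common face.

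The main obstacle is precisely this compatibility across overlaps. The shadow map collapses each high-dimensional simplex onto a planar convex polygon, and distinct simplices sharing no face can nonetheless have overlapping convex hulls in $\real^2$, so a naive cell-by-cell lift need not even be well defined, let alone continuous, across transitions. Here I would invoke the no-local-hole hypothesis together with Theorem~\ref{thm:iso shadow}: ruling out local holes (Definition~\ref{def:local hole}) is exactly what prevents spurious planar overlaps from carrying $\pi_1$-information that the complex cannot see, and it lets me route each transition of $f$ through an honest shared face $\tau=\sigma_j\cap\sigma_{j+1}$ on which the chosen $h_\sigma$ have been arranged to agree. Finally, to promote the construction from compact intervals to all of $[t_0,\infty)$, I would note that the adapted subdivision is locally finite in time, so the concatenation $\tilde f=\tilde f_0*\tilde f_1*\cdots$ is continuous on every $[t_0,T]$ and hence on $[t_0,\infty)$; by the opening observation it avoids every $\tilde U_x$ with $x$ awake, giving the desired lift.
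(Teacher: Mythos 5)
Your high-level strategy is the same as the paper's: its (very brief) proof also passes to the 2-skeleton, which has the same shadow, lifts $f$ cell by cell, and applies Lemma~\ref{lem:escape in one simplex} ``on every simplex it goes through''; to your credit, you explicitly isolate the gluing of the per-simplex lifts as the real content, which the paper elides. But your resolution of that gluing step has a genuine gap. Theorem~\ref{thm:iso shadow} and the no-local-hole condition are statements about $\pi_1$; neither prevents the shadows of two simplices with \emph{no} common face from overlapping. In a planar Rips complex, two edges $[a,b]$ and $[c,d]$ with disjoint vertex sets can cross; at such a transition $f(s_{j+1})$ lies in $S(\sigma_j)\cap S(\sigma_{j+1})$ while $\sigma_j\cap\sigma_{j+1}=\emptyset$, so there is no ``honest shared face'' through which to route the lift, and your transition rule is undefined in exactly the case you flagged as problematic. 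What actually rescues the argument is the geometric crossing lemma that underlies Theorem~\ref{thm:iso shadow} in \cite{Chambers_vietoris-ripscomplexes}: when two Rips edges cross, one of the four endpoints lies within distance $r$ of the other three, so the crossing point also lies in the shadow of a $2$-simplex sharing a vertex with both edges; the lift must be routed through these auxiliary simplices, re-invoking Lemma~\ref{lem:escape in one simplex} on them (legitimate, since $f$ avoids the coverage of \emph{all} awake nodes, in particular those spanning the auxiliary simplices). This geometric step is missing from your proposal (and, to be fair, is implicit at best in the paper's own proof).

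A second, more technical flaw: the finite time partition you construct need not exist. The cells $S(\sigma)$ are closed, not open, so uniform continuity yields no Lebesgue number for this cover; a path can cross the common boundary of two cells infinitely often in finite time (a damped oscillation across a shared edge), in which case no finite partition of a compact interval has each piece mapping into a single cell, and the claimed local finiteness of your subdivision---hence the continuity of the infinite concatenation $\tilde f_0 * \tilde f_1 * \cdots$---fails. This is repairable (for instance, work with the closed preimages $f^{-1}(S(\sigma))$ and define the lift on their overlaps first, or first deform $f$ within the avoidance region so that it meets cell boundaries only finitely often on each compact interval), but some such argument must be supplied before ``glue the pieces'' is meaningful.
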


\begin{proof}
The 2-complex $C_2(X)$ as a sub complex, has the same shadow as the Rips complex $\Rips_r(X)$. Lift the path $f$ from the shadow $S(C_2(X))$ to the complex $C_2(X)$, then apply lemma \ref{lem:escape in one simplex} on every simplex it goes through. This will give a lift $\tilde{f}: [t_0,\infty)\rightarrow C_2(X)\subset\Rips_r(X)$, such that $\tilde{f}(t)\notin \bigcup_{x\in X(t)} \tilde{U}_x, \ \forall t\in[t_0,\infty)$.
\end{proof}

\begin{theorem}
\label{thm:no escape in shadow}
For a network $X$ with coverage regions $\mathcal{U}$ and Rips complex $\Rips_r(X)$ with the shadow the whole $\Domain$.  Then if the state on $X$ is eventually continuous, and if there is no defect in the initial condition, then the evader loses the evasion game.
\end{theorem}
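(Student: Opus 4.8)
The plan is to argue by contradiction and to push the problem off the planar shadow onto the abstract Rips complex, where the one--dimensional machinery (Theorem~\ref{thm:limit case theorem} and Lemma~\ref{lem:homeomorphic subspace}) can be reused one dimension up. Suppose some evader wins: there is a continuous $f:[t_0,\infty)\to\Domain=S(\Rips_r(X))$ with $f(t)\notin\bigcup_{x\in X(t)}U_x$ for all $t$. Since the shadow of $\Rips_r(X)$ is all of $\Domain$, Lemma~\ref{lem:escape in complex} applies and yields a lift $\tilde f:[t_0,\infty)\to\Rips_r(X)$ avoiding the awake open stars, i.e.\ $\tilde f(t)\notin\bigcup_{x\in X(t)}\tilde U_x$ for all $t$. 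The first observation is that such a $\tilde f$ must, at each instant, lie in a simplex all of whose vertices are asleep: a point in the open star of an awake vertex is covered. So the whole escaping trajectory is trapped in the time--varying ``asleep'' subcomplex.

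Next I would build a space--time separating surface, mimicking the one--dimensional argument but inside the complex. Form the product $\Rips_r(X)\times[t_0,\infty)$ together with the awake--coverage region $\tilde P_c=\bigcup_{t=\lceil t_0\rceil}^{\infty}\bigcup_{x\in X(t)}\tilde U_x\times[t,t+1)$, and construct $S\subset\tilde P_c$ by the inductive procedure of Lemma~\ref{lem:homeomorphic subspace}: fix a large integer time at which the state is continuous, select an awake vertex, add its space--time point, and propagate along edges of the network, lifting each edge to a continuous path in $\tilde P_c$ whose time coordinate changes by at most one (legitimate because continuity is forward invariant, Corollary~\ref{cor:continuity holds}). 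The hypotheses are arranged precisely so that this lift closes up consistently: the only obstruction to well--definedness is a cycle whose lift fails to be a loop, which is exactly a defect, and by Lemma~\ref{lem:invariant degree} the degree is a time invariant, so the absence of a defect in the initial condition forces degree zero on every cycle for all time. Hence $S$ is well defined and $p:S\to\Rips_r(X)$ is a homeomorphism onto the complex, which carries the homotopy type of $\Domain$.

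Finally I would conclude by a duality/separation argument. Because $S$ projects homeomorphically onto $\Rips_r(X)$, it is a codimension--one ``section'' of the product cylinder lying entirely in the covered region $\tilde P_c$, and it therefore separates the entry slice $\Rips_r(X)\times\{t_0\}$ from the far--future end; every continuous path that starts at time $t_0$ and runs to arbitrarily large time must cross $S$. In particular the lifted trajectory $t\mapsto(\tilde f(t),t)$ meets $S\subset\tilde P_c$ at some $\tau$, contradicting that $\tilde f$ avoids $\tilde P_c$ for all time. Hence no winning evader trajectory exists. As a consistency check, the same hypotheses force, via Theorem~\ref{thm:die iff no defect}, the continuous defect--free state to die out to the all--awake configuration, after which the whole of $\Domain$ is covered; the surface $S$ is the transient--aware refinement of that statement.

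I expect the main obstacle to be exactly the higher--dimensional separating--surface construction. In the one--dimensional case $S$ is a curve in a two--dimensional cylinder, whereas here $S$ must be a surface in a three--dimensional product, so one must verify both that the inductive lift over the $2$--skeleton is globally consistent (the no--defect/degree--invariance step, including the reduction to a subnetwork whose simplices overlap controllably, as in Lemma~\ref{lem:homeomorphic subspace}) and that a homeomorphic section of this codimension genuinely blocks every top--to--future path (the duality step). Care is also needed at the passage from the transient regime to the eventually continuous regime, and in confirming that the open--star coverage used in the complex corresponds, through Lemma~\ref{lem:escape in complex}, to the geometric coverage on the shadow.
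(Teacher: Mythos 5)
Your opening move coincides with the paper's: lift the winning trajectory from the shadow to the Rips complex via Lemma~\ref{lem:escape in complex}, and note that the lifted path must at each instant lie in a simplex all of whose vertices are asleep. After that the routes diverge, and your route as written has a genuine gap exactly where you flag ``the main obstacle.'' The inductive procedure of Lemma~\ref{lem:homeomorphic subspace} propagates only along \emph{edges}, so it produces a section $S$ over the $1$-skeleton of $\Rips_r(X)$ only. Your separation argument, however, needs $p|_{S}$ to be a homeomorphism onto the \emph{whole} complex (equivalently, $S$ must be the graph of a continuous time function defined on all of $\Rips_r(X)$, which may contain simplices of dimension higher than two): a section over the $1$-skeleton does not separate $\Rips_r(X)\times[t_0,\infty)$, since a path can slip through the interior of any $2$-simplex without meeting it. Extending $S$ over the higher-dimensional simplices while staying inside the covered region is precisely the higher-dimensional content of the statement, and the proposal defers it rather than proving it. The gap is fillable: absence of defects means the $\zed_n$-valued phase lifts to an integer-valued function $T$ on the vertices with $|T(x)-T(y)|\leq 1$ across edges; extend $T$ affinely over each simplex by barycentric coordinates, and the graph of this extension lies in the covered region because a point of an open simplex lies in the open star of \emph{every} vertex of its carrier. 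But none of this appears in your text, so the central step of your argument is asserted, not proved.

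The paper avoids the issue entirely with a reduction you missed: instead of building a barrier one dimension up, it pushes the escaping path \emph{down}. Since the sensors whose open stars meet a $1$-simplex form a subset of those covering any simplex containing it, the safe lifted path can be replaced by a safe path traveling on the $1$-skeleton; then the one-dimensional result, Theorem~\ref{thm:limit case theorem} (whose proof is where Lemma~\ref{lem:homeomorphic subspace} legitimately applies), already forbids such a path when there is no defect. So the paper only ever needs the curve-in-cylinder construction, while you attempt to redo that construction for a surface in a three-dimensional product. One further remark: the observation you relegate to a ``consistency check'' --- that by Theorem~\ref{thm:die iff no defect} a continuous defect-free state on a connected compact network dies out to the all-$0$ (all-awake) state, at which moment the shadow hypothesis gives full coverage of $\Domain$ --- is by itself a complete and much shorter proof of the theorem \emph{as literally stated}. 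The path-pushing/barrier machinery earns its keep only because the theorem is later applied (in Theorem~\ref{thm:main theorem}) to subnetworks over subdomains whose dynamics are driven by waves entering from defects outside, where dying-out fails and only the sweeping-barrier argument survives.
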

\begin{proof}
Suppose there is a continuous path $f$ for the evader to follow in order to win the evasion game, $f:[t_0,\infty)\rightarrow S(\Rips_r(X))=\Domain$, then by Lemma \ref{lem:escape in complex}, there exists a lift of $f$, $\tilde{f}: [t_0,\infty)\rightarrow \Rips_r(X)$, such that following $\tilde{f}$, the evader could win the evasion game with coverage regions $\{\tilde{U}_x|x\in X\}$. Furthermore, since $\tilde{f}(t)\notin \bigcup_{x\in X(t)} \tilde{U}_x, \ \forall t\in[0,\infty)$, and by the fact that a 1-simplex is covered by a subset of sensors that covers the simplex containing it, we can construct a continuous path $f'$ that travels only on the 1-skeleton of $\Rips_r(X)$ and still is safe, never being detected. However, by the same argument as in Theorem \ref{thm:limit case theorem}, since there is no defect in initial condition, such a strategy does not exist: any such evader would lose the game.
\end{proof}

If may not be the case that $S(\Rips_r(X))\supset\Domain$. Our approach for solving this problem is by adding sensors to the network without changing the coverage, but enlarge the Rips complex such that it projects onto the whole domain.

\begin{lemma}
\label{lem:same shadow}
If a boundary path exists within distance $\sqrt{3}/2r$ to each boundary component of $\partial\Domain$, then there is a new sensor network $\tilde{X}$ by adding sensors to $X$, with the same coverage at every moment, such that the shadow of $\Rips_r(\tilde{X})$ is $\Domain$.
\end{lemma}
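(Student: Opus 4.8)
The plan is to add a single ``virtual'' row of sensors along each boundary component, placed on $\partial\Domain$ itself, and assigned empty coverage, so that the only effect is to complete the Rips complex near the boundary while leaving the detection set $\bigcup_{x\in X(t)}U_x$ unchanged at every time $t$. The guiding observation is that the gap $\Domain\setminus S(\Rips_r(X))$ consists precisely of thin strips adjacent to $\partial\Domain$: away from the boundary the density/no-local-hole hypotheses guarantee that the convex hulls of Rips simplices already tile $\Domain$, while near the boundary the shadow stops short because all nodes lie on the interior side and no simplex reaches across. The outermost such nodes are exactly the boundary path, so for each boundary component the uncovered region is the strip lying between its boundary path and $\partial\Domain$. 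Our job reduces to covering these strips with shadows of new simplices.

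The geometric heart is the following placement, and it is where the threshold $\tfrac{\sqrt3}{2}r$ enters. Work locally in coordinates where the boundary piece is the $x$-axis and $\Domain=\{y\ge 0\}$, so each boundary-path node $y_i=(a_i,h_i)$ has $0<h_i\le \tfrac{\sqrt3}{2}r$. The set of boundary points $(x,0)$ within distance $r$ of $y_i$ is the interval of half-width $\sqrt{r^2-h_i^2}\ge\sqrt{r^2-\tfrac34 r^2}=\tfrac r2$. For consecutive path nodes the horizontal separation satisfies $|a_{i+1}-a_i|\le|y_{i+1}-y_i|\le r\le w_i+w_{i+1}$, so the two reachable intervals meet and we may choose a common $z_i\in\partial\Domain$ with $|z_i-y_i|\le r$ and $|z_i-y_{i+1}|\le r$; hence $[y_i,y_{i+1},z_i]$ is a genuine Rips $2$-simplex whose apex sits on $\partial\Domain$. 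In the extremal case $h_i=h_{i+1}=\tfrac{\sqrt3}{2}r$ with separation $r$, the intersection is the single midpoint and the triangle is equilateral of side $r$, reaching perpendicular distance exactly $\tfrac{\sqrt3}{2}r$ to the boundary---this is precisely the maximal reach of a Rips triangle on a length-$r$ edge, which explains the threshold. Together with the upward triangles $[z_i,z_{i+1},y_{i+1}]$ (available since consecutive apexes lie within distance $r$), the new simplices form a triangulated strip whose shadow covers the gap between the boundary path and $\partial\Domain$.

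Assembling, let $\tilde X=X\cup\{z_i\}$. Since adding nodes only enlarges the complex, $S(\Rips_r(\tilde X))\supseteq S(\Rips_r(X))$, and by the previous step it additionally contains every boundary strip, so $S(\Rips_r(\tilde X))\supseteq S(\Rips_r(X))\cup(\text{strips})=\Domain$. The reverse inclusion holds because each added triangle has all vertices in $\overline{\Domain}$ and lies in the local half-plane, hence its convex hull stays in $\Domain$ (using that narrow hallways are locally half-planar along their boundaries). Thus $S(\Rips_r(\tilde X))=\Domain$, as required, and because each $z_i$ carries empty coverage the awake-sensor coverage is identical to that of $X$ at every moment.

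I expect the main obstacle to be the covering claim for a \emph{general} boundary path rather than the extremal configuration: one must verify that consecutive added triangles overlap with no residual gaps as the heights $h_i$ and separations vary, and that the construction degrades gracefully near convex/concave corners of $\partial\Domain$ where the half-plane model breaks and the added hulls must be checked to remain inside $\Domain$. A secondary point, needed so that Theorem~\ref{thm:no escape in shadow} still applies to $\tilde X$, is that the insertions preserve the ``eventually continuous, defect-free'' hypotheses: each $z_i$ may be taken as a leaf subordinate to a neighboring path node, hence eventually periodic and continuous by Corollary~\ref{cor:subordination hold}, and every loop it creates bounds an added $2$-simplex and so is null-homologous, carrying degree zero by Lemma~\ref{lem:defect discontinuity}; thus no new defect is introduced.
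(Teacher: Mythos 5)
Your overall plan --- add nodes on $\partial\Domain$ so that new Rips triangles fill the uncovered strips between the boundary paths and the walls --- matches the paper's strategy, and your explicit accounting for the threshold $\sqrt{3}r/2$ is actually more careful than the paper's own proof, which never spells out where that constant enters. However, there are two genuine gaps. First, the claim that consecutive apexes satisfy $|z_i - z_{i+1}|\le r$ is false in general: both $z_i$ and $z_{i+1}$ lie in the interval of boundary points within distance $r$ of $y_{i+1}$, whose half-width is $\sqrt{r^2-h_{i+1}^2}\ge r/2$, so all you can conclude is $|z_i-z_{i+1}|\le 2\sqrt{r^2-h_{i+1}^2}$; this is $\le r$ only when $h_{i+1}\ge \sqrt{3}r/2$, which is the \emph{opposite} of the hypothesis $h_{i+1}\le\sqrt{3}r/2$. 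When a path node sits close to the wall (small $h_{i+1}$), the apexes can be nearly $2r$ apart, the ``upward triangle'' $[z_i,z_{i+1},y_{i+1}]$ is not a Rips simplex, and the region bounded by $[z_i,y_{i+1}]$, $[y_{i+1},z_{i+1}]$ and the wall stays outside the shadow. The fix is easy --- the whole boundary segment from $z_i$ to $z_{i+1}$ lies within distance $r$ of $y_{i+1}$, since the set of such boundary points is an interval containing both, so one can insert intermediate boundary nodes at spacing $\le r$ --- and the paper's version of the construction (one new node per path node, placed in $U_x\cap\partial\Domain$, plus one per path edge, placed in $U_x\cap U_y\cap\partial\Domain$) avoids the issue by making every new triangle contain an old node.

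Second, and more consequentially, assigning the new sensors \emph{empty} coverage satisfies the literal wording of the lemma but destroys the property the lemma exists to deliver. The lemma is used only in Theorem \ref{thm:main theorem}, whose proof applies Theorem \ref{thm:no escape in shadow} to $\tilde X$; that theorem in turn runs through Lemma \ref{lem:escape in one simplex}, whose proof needs the standing assumptions: each $U_x$ is a convex set around the sensor's location, every simplex's shadow is covered by the union of its vertices' coverages, and the homotopy equivalence constructed there requires $\bigcap_i U_{x_i}\cap S(\sigma)$ to have nonempty interior. With $U_{z_i}=\emptyset$ all of this fails on the new triangles, and worse, the new boundary edges $[z_i,z_{i+1}]$ are covered by no sensor at any time, so the reduction of the evader's path to the $1$-skeleton no longer forces detection. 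You verified that the insertion preserves the dynamical hypotheses (continuity, no new defects) but not the coverage hypotheses, which are exactly what your choice breaks. The paper's proof is built around this point: the new nodes are placed \emph{inside} coverage sets (the intersections $U_x\cap\partial\Domain$ and $U_x\cap U_y\cap\partial\Domain$ are nonempty precisely by Definition \ref{def:boundary node}) and are declared clones, carrying the same coverage and the same states as the corresponding path node, so the total coverage is literally unchanged at every moment while all assumptions of Theorem \ref{thm:no escape in shadow} remain valid for $\tilde X$ (e.g.\ the quadrangle $x,y,x',y'$ is covered by $U_x\cup U_y$); cloning states also disposes of your ``secondary point'' for free.
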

\begin{proof}
For every node $x$ in the boundary path, add a node $x'$ in $U_x\cap\partial\Domain$ to the new network $\tilde{X}$, and for every edge on the path $[x,y]$, add a node $z'$ in $U_x\cap U_y\cap\partial\Domain$ to $X'$. For a quadrangle with vertices $x,y,x',y'$, it is covered by union of $U_x$ and $U_y$. Let $x'$ and $z'$ have same coverage and states as $x$, and $y'$ has the same as $y$ after equilibrium, then the coverage of $\tilde{X}$ is exactly the same as that of $X$ at every moment. Another property worth noticing is $\Rips_r(\tilde{X})$ now has its shadow same as $\Domain$, because $[x,x'], [x',z'], [x,z'], [y,z'], [z',y'], [y',y]$ are all 1-simplices in $\Rips_r(\tilde{X})$, which makes the shadow exactly $\Domain$.
\begin{figure}
\centering
\epsfig{file=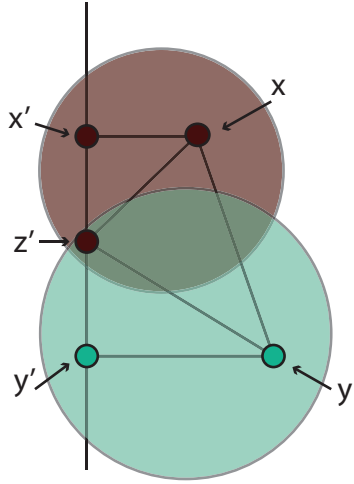}
\caption{add new nodes $x',y',z'$ to the network, with $x'$ and $z'$ have same state and coverage as $x$, and $y'$ has same state and coverage as $y$}
\end{figure}
\end{proof}

\begin{theorem}[Main Theorem]
\label{thm:main theorem}
With the existence of boundary paths within distance $\sqrt{3}/2r$ to boundary of hallways, the evader will always lose the evasion game in the sub domain $\Domain'\subset\Domain$ on which the states is eventually continuous and contains no defects.
\end{theorem}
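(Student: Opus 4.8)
The plan is to reduce the theorem directly to Theorem \ref{thm:no escape in shadow} by means of the network augmentation of Lemma \ref{lem:same shadow}. The obstruction to invoking Theorem \ref{thm:no escape in shadow} as it stands is that its hypothesis requires the shadow $S(\Rips_r(X))$ to equal the relevant domain, whereas a generic network on a narrow hallway need not have its Rips shadow reach all the way to the walls. The boundary-path hypothesis (a boundary path within distance $\sqrt{3}/2\,r$ of each component of $\partial\Domain$) is exactly the input Lemma \ref{lem:same shadow} needs to remedy this defect in coverage of the shadow.

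First I would restrict attention to the subdomain $\Domain'$ and the subnetwork $X'$ covering it, on which the state is eventually continuous and defect-free by hypothesis. Applying Lemma \ref{lem:same shadow} produces an augmented network $\tilde{X}\supset X'$ whose coverage agrees with that of $X'$ at every moment and whose Rips complex satisfies $S(\Rips_r(\tilde{X}))=\Domain'$. Because the covered sets $\bigcup_{x\in X'(t)}U_x$ and $\bigcup_{x\in\tilde{X}(t)}U_x$ coincide for every $t$, the winning condition of Definition \ref{def:evasion game} is identical for the two networks: the evader wins on $X'$ if and only if it wins on $\tilde{X}$. It therefore suffices to rule out an evasion path for $\tilde{X}$.

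Next I would check that $\tilde{X}$ inherits the two dynamical hypotheses of Theorem \ref{thm:no escape in shadow}. Each added node is assigned (after equilibrium) the state of an original node whose coverage it duplicates, and the new Rips edges join it only to that node or to a neighbor differing in state by at most $1$; hence continuity on $\tilde{X}$ follows from continuity on $X'$. For the defect condition, the added nodes and edges fill the boundary quadrangles with contractible $2$-cells of $\Rips_r(\tilde{X})$, so any loop passing through the new vertices is homologous to a loop in the original complex. Since the degree is additive (Lemma \ref{lem:degree of summation}) and vanishes on the boundary of any single $2$-simplex (as in the proof of Lemma \ref{lem:defect discontinuity}), it descends to a homological invariant; the degree of any such loop therefore equals the degree of its original representative, which is zero. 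Thus $\tilde{X}$ is eventually continuous and defect-free, with Rips shadow equal to $\Domain'$.

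Finally, Theorem \ref{thm:no escape in shadow} applied to $\tilde{X}$ over $\Domain'$ shows the evader has no winning continuous trajectory, and by the coverage equivalence the same conclusion holds for the original network on $\Domain'$. The main obstacle I anticipate is the bookkeeping of the middle step: confirming rigorously that the state-copying prescription of Lemma \ref{lem:same shadow} preserves continuity across every newly created edge and introduces no defect on any loop through the added boundary vertices. Once the shadow hypothesis has been arranged in this way, the final invocation of Theorem \ref{thm:no escape in shadow} is immediate.
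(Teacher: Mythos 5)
Your proposal is correct and follows essentially the same route as the paper, whose proof is exactly the reduction you describe: invoke Lemma \ref{lem:same shadow} to augment the network so the Rips shadow covers the domain, then apply Theorem \ref{thm:no escape in shadow}. In fact you supply more detail than the paper does (the coverage-equivalence of the evasion game and the verification that $\tilde{X}$ inherits continuity and defect-freeness), which the paper leaves implicit.
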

\begin{proof}
By Theorem \ref{thm:no escape in shadow} and Lemma \ref{lem:same shadow}.
\end{proof}

\section{Controlling the Cohomology}
\label{sec:seedoff}

We have observed in simulation that sometimes there is no ``local defect'' continuously generating wavefronts, but the system still reaches a nonzero equilibrium, with the remaining wavefronts propagating along hallways in periodic way. This phenomenon contributes to the existence of a ``global defect'', which differs from the ``local defect'' in that the cycle on which the defect is supported is in a non-zero class in first homology of the Rips complex $\Rips_r(X)$, instead of a trivial one. Such an equilibrium presents a much higher portion of nodes in state $0$ than those with local defects. We will try to manually generate such patterns in GHM by turning off local defects.

Such protocol is not energy efficient unless we shift state $0$ to sleep state as follows: the new interpolation lets state $1$ to be waking state, state $2$ to be broadcasting state, and state $3$ till $0$ to be sleeping state. Then most of the nodes will be sleeping after they are eventually periodic.

Recall from Definition \ref{def:degree}, the degree (or winding number) of a continuous state on a cycle is an index measuring how man times the states cycle through the alphabet on the cycle. Therefore, after local defects are turned off by breaking the links between state $0$ and state $1$ nodes, the degree of a cycle which makes a nonzero class in first homology of the Rips complex $\Rips_r(X)$ is determined by the number of wavefronts already generated and their directions of propagation. In other words, degree for all cycles is determined absolutely by local defects' location and the number of wavefronts they have sent out in the hallways. Note that the degree is invariant in time for a continuous state. Thus counting the degree for a cycle after defects are turned off is not a difficult problem: following the direction of this cycle, the number of wavefronts in the same direction minus the number of wavefronts in the opposite direction determines the degree.

\begin{definition}
\label{def:degree of generator}
For a class $[\alpha]$ in $H_1(\Rips_r(X))$ (or $H_1(C_2(X))$, the first homology of the 2-complex), define the degree of a continuous state $u$ on $[\alpha]$ as the degree of $u$ on cycle $\alpha$. If the projection $\pi: \Rips_r(X)\rightarrow\Domain$ induces an isomorphism $\pi_*: H_1(\Rips_r(X))\rightarrow H_1(\Domain)$, then define the degree of $u$ on $\pi_*([\alpha])$ as the degree of $u$ on $\alpha$.
\end{definition}

As a remark, the degree of a generator in first homology is well-defined, if for homologous cycles $\alpha$ and $\beta$, degree of $u$ restricted on both are the same. By Lemma \ref{lem:defect discontinuity}, degree of $u$ restricted on $\alpha-\beta$, which is a null-homologous cycle, has to be zero. Thus, the degree on $\alpha$ and $\beta$ have to be the same. By abuse of notation, $\degree(u,\alpha)$ will be used for $\alpha$ as a first homology class in either $\Rips_r(X)$ or $\Domain$.

\begin{definition}
\label{def:first cohomology}
Let $\cont(X)$ represent the set of continuous states on $X$. Define a cohomologizing map $h: \cont(X)\rightarrow H^1(\Rips_r(X))=Hom(H_1(\Rips_r(X)),\zed)$, such that $h(u)([\alpha])=\degree(u,[\alpha])$.
\end{definition}

As a remark, the first cohomology $H^1(\Rips_r(X))$ defined here is a simplicial cohomology. It is torsion free and therefore can be treated as $Hom(H_1(\Rips_r(X)),\zed)$.

\begin{definition}
\label{def:wave}
A \style{single wave} is a continuous state $u$ on $X$, such that (1) there exists a barrier on which $u$ is supported, and (2) there exists a cycle $\alpha$ on which the degree of $u$ is 1.
\end{definition}

By the definition of a wave, the degree is zero on those cycles that do not intersect the wave's support. The waves move (changing supports in time) in a way that degrees are invariant. They are even additive under some circumstances, by next lemma, which allows for algebraic manipulations.

\begin{lemma}
\label{lem:addativity}
Let $\phi_1$ and $\phi_2$ be two continuous states on $X$ with supports $X_1$ and $X_2$, with no two nodes from $X_1$ and $X_2$ being neighbors. Let $\phi=\phi_1+\phi_2$ be a state on $X$, then $\phi$ is a continuous state on $X$, which satisfies $h({\phi})=h({\phi_1})+h({\phi_2})$.
\end{lemma}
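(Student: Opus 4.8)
The plan is to prove the two assertions separately: first that $\phi=\phi_1+\phi_2$ is continuous, and then that its cohomology class splits as $h(\phi)=h(\phi_1)+h(\phi_2)$. Both rest on a single structural observation, which I would record first. Since each $\phi_i$ is continuous on all of $X$ and equals the background state $0$ off its support $X_i$, any node $x\in X_i$ adjacent to a node $y\notin X_i$ satisfies $|\phi_i(x)-\phi_i(y)|=|\phi_i(x)|\le 1$, so the boundary values of each wave profile lie in $\{0,1,n-1\}$. Moreover, disjointness of the supports means that at every node at most one summand is nonzero, so $\phi(x)=\phi_1(x)$ for $x\in X_1$, $\phi(x)=\phi_2(x)$ for $x\in X_2$, and $\phi(x)=0$ elsewhere. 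The hypothesis that no node of $X_1$ neighbors a node of $X_2$ is precisely what prevents two near-background values of opposite sign (e.g. $1$ and $n-1$) from landing on the two ends of a single edge.

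For continuity I would take an arbitrary pair of neighbors $x,y$ and argue by cases on support membership. The non-adjacency hypothesis rules out the case $x\in X_1,\,y\in X_2$; in every remaining case both $x$ and $y$ avoid at least one of the two supports, so along the edge $[x,y]$ the state $\phi$ coincides with a single $\phi_i$ (or with $0$). Hence $|\phi(x)-\phi(y)|\le 1$ follows directly from the continuity of that $\phi_i$, and $\phi$ is continuous on all of $X$.

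For the cohomology identity I would fix a cycle $\alpha=\sum_i[a_i,b_i]$ and compare the degree sums edge by edge, using the same case analysis. If the edge $[a_i,b_i]$ meets $X_2$ in no vertex then $\phi_2$ contributes the summand $0$ and $\phi$ agrees with $\phi_1$ there, and symmetrically if the edge misses $X_1$; the excluded case $a_i\in X_1,\,b_i\in X_2$ never occurs. Thus the forced summand for $\phi$, namely the representative of $\phi(b_i)-\phi(a_i)$ in $\{-1,0,1\}$, equals the sum of the corresponding forced summands for $\phi_1$ and $\phi_2$ for every $i$. Summing over $i$ and dividing by $n$ gives $\degree(\phi,\alpha)=\degree(\phi_1,\alpha)+\degree(\phi_2,\alpha)$. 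Since this holds for every cycle, and since degree descends to a well-defined function on $H_1(\Rips_r(X))$ by Lemma \ref{lem:defect discontinuity}, evaluating on each generator yields $h(\phi)=h(\phi_1)+h(\phi_2)$ in $H^1(\Rips_r(X))$.

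The only step demanding genuine care, and the main obstacle, is the continuity argument at the interface between a support and the background: one must confirm both that each profile is within $1$ of $0$ on its boundary and that the separation hypothesis is strong enough to exclude every edge joining the two supports. Once that single case is eliminated, both claims reduce to the fact that on any admissible edge exactly one of the two profiles is active, after which the additivity of the degree summand is immediate.
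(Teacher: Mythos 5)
Your proof is correct and takes essentially the same route as the paper's: continuity is verified by the same case analysis on support membership (the non-adjacency hypothesis ruling out any edge joining $X_1$ to $X_2$, so that on every edge $\phi$ coincides with a single $\phi_i$ or with $0$), and the degree identity follows by splitting the edge summands of an arbitrary cycle into those meeting $X_1$, those meeting $X_2$, and those meeting neither. Your write-up is merely more explicit than the paper's about the summand-by-summand bookkeeping and about degree descending to homology classes via Lemma~\ref{lem:defect discontinuity}.
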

\begin{proof}
The continuity of $\phi$ inside $X_1$ and $X_2$ is inherited form the continuity of $\phi_1$ and $\phi_2$. If $x_1\in X_1$ and $x_2\not\in X_1$ are neighbors, then $x_2\not\in X_2$. This means $\phi(x_2)=0$, which makes $\phi$ on the pair $(x_1,x_2)$ is continuous. The same argument works for a pair of neighbors in and out of $X_2$. For two neighbors both outside $X_1$ and $X_2$, on which $\phi$ is 0, the continuity also holds since the values have to be both 0.
Let $\alpha=\sum_{i=1}^{K}[a_i,b_i]$ be a cycle in $X$, then $\alpha\cap X_1$ and $\alpha\cap X_2$ are two non neighboring subsets, and:
\begin{displaymath}
h(\phi)=1/n\sum_{i=1}^{K}(\phi(b_i)-\phi(a_i))
\end{displaymath}
For these pairs of neighbors $a_i$ and $b_i$, there could be at least one in $X_1$, which sum up to be $h({\phi_1})$, or at least one in $X_2$, which sum up to be $h({\phi_2})$, otherwise, both are in neither $X_1$ or $X_2$, which sum up to 0. Therefore, $h({\phi})=h({\phi_1})+h({\phi_2})$.
\end{proof}

\begin{corollary}
\label{cor:sum of waves}
If states $\phi_1,\dots,\phi_k$ have distinct and non-neighboring supports, then $h(\sum\phi_i)=\sum h(\phi_i)$.
\end{corollary}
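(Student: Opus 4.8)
The plan is to prove this by induction on $k$, with Lemma \ref{lem:addativity} supplying both the base case and the engine of the inductive step. I read the hypothesis ``distinct and non-neighboring supports'' as the condition that the supports $X_1,\dots,X_k$ are \emph{pairwise} non-neighboring, i.e.\ for every $i\neq j$ no node of $X_i$ is a neighbor of a node of $X_j$, which is exactly the two-state hypothesis of Lemma \ref{lem:addativity} imposed on each pair. The case $k=1$ is trivial and the case $k=2$ is precisely Lemma \ref{lem:addativity}, so it remains to carry out the inductive step.

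Assume the statement holds for any collection of $k-1$ states with pairwise non-neighboring supports. Given $\phi_1,\dots,\phi_k$ with such supports $X_1,\dots,X_k$, set $\psi=\sum_{i=1}^{k-1}\phi_i$. By the inductive hypothesis, $\psi$ is a continuous state and $h(\psi)=\sum_{i=1}^{k-1}h(\phi_i)$. The remaining task is to apply Lemma \ref{lem:addativity} to the pair $\psi$ and $\phi_k$, for which I must verify that their supports contain no two neighboring nodes.

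The key observation is that the support of a sum is contained in the union of the supports of the summands: any node at which $\psi$ is nonzero must lie in some $X_i$ with $i<k$, so $\mathrm{supp}(\psi)\subset\bigcup_{i=1}^{k-1}X_i$. Since each $X_i$ with $i<k$ is non-neighboring to $X_k$ by the pairwise hypothesis, no node of $\bigcup_{i=1}^{k-1}X_i$ neighbors any node of $X_k$, and hence the same holds for $\mathrm{supp}(\psi)$ and $X_k$. Lemma \ref{lem:addativity} then yields that $\psi+\phi_k=\sum_{i=1}^{k}\phi_i$ is a continuous state and that $h\bigl(\sum_{i=1}^{k}\phi_i\bigr)=h(\psi)+h(\phi_k)=\sum_{i=1}^{k}h(\phi_i)$, completing the induction.

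The argument is essentially bookkeeping, and the one point requiring care is the non-neighboring verification in the inductive step. There it is crucial that $\mathrm{supp}(\psi)$ is merely \emph{contained in} the union $\bigcup_{i<k}X_i$ rather than equal to it; this containment is all that is needed, and it is precisely what lets the pairwise hypothesis on the original supports propagate to the pair $(\psi,\phi_k)$. No further obstacle arises, since the continuity of each partial sum—needed for $h$ to be defined on it—is itself guaranteed inductively by Lemma \ref{lem:addativity}.
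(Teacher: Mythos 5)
Your proof is correct and takes essentially the same route as the paper: the paper states Corollary \ref{cor:sum of waves} without proof, treating it as an immediate iteration of Lemma \ref{lem:addativity}, and your induction---with the strengthened inductive hypothesis that each partial sum is itself continuous, plus the observation that $\mathrm{supp}(\psi)\subset\bigcup_{i<k}X_i$ preserves the non-neighboring condition---is precisely that iteration spelled out carefully.
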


An important property of the narrow hallways $\Domain$ is has the topological type of a planar graph $G$; specifically, $G$ is a deformation retraction of $\Domain$, with retraction map $r: \Domain\rightarrow G$ and injection map $i: G\rightarrow\Domain$.  Suppose $H_1(\Rips_r(X))=H_1(\Domain)=\bigoplus_g\zed$, and $\{[\alpha_1],\dots,[\alpha_g]\}$ is a basis for $H_1(\Rips_r(X))$, accordingly, $\{\pi_*([\alpha_1]),\dots,\pi_*([\alpha_g])\}$ is a basis for $H_1(\Domain)$. Since the degree of $u$ on a cycle in $\Rips_r(X)$ is totally determined by integers $\degree(u,\alpha_1),\dots,\degree(u,\alpha_g)$ by Lemma \ref{lem:degree of summation}, we only need to focus on controlling the degree on a basis.

One problem we care about is whether one can realize every possible degree. In other words, the question could be reformed as whether the map $h$ is surjective. Specifically, is it possible to realize a continuous state $u$, such that $\degree(u,[\alpha])=f([\alpha])$, where $f: H_1(\Rips_r(X))\rightarrow \real$ is any integer valued linear map satisfying $f([\alpha+\beta])=f([\alpha])+f([\beta])$.

Our last theorem concerns this ability to {\em program pulses} in the network for customizing the response.

\begin{theorem}
\label{thm:degree realizability}
The map $h$ is surjective: if $[f]\in H^1(\Rips_r(X))$, then there exist a continues state $u$ on $X$, such that $h(u)=[f]$.
\end{theorem}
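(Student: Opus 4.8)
The plan is to realize an arbitrary cohomology class by summing up ``single wave'' states, one for each basis element of $H_1(\Rips_r(X))$, exploiting the additivity already established in Corollary~\ref{cor:sum of waves}. Fix a basis $\{[\alpha_1],\dots,[\alpha_g]\}$ for $H_1(\Rips_r(X))\cong\bigoplus_g\zed$, and recall that a class $[f]\in H^1(\Rips_r(X))=\mathrm{Hom}(H_1,\zed)$ is determined by the $g$ integers $m_j:=f([\alpha_j])$. Since $H^1$ is torsion-free and $h$ is evidently additive (Lemma~\ref{lem:addativity}), it suffices to build, for each generator $[\alpha_j]$ and each integer value $m_j$, a continuous state whose degree is $m_j$ on $[\alpha_j]$ and $0$ on the other basis classes, and then add these states together.

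The key steps, in order, would be as follows. First I would handle the elementary single wave: because $G$ is a deformation retraction of $\Domain$ and $\pi_*$ is an isomorphism on $H_1$, each generator $[\alpha_j]$ is represented by an embedded loop in $\Domain$ meeting each other generator trivially; along this loop I would place a ``single wave'' in the sense of Definition~\ref{def:wave}, assigning states $0,1,2,\dots$ in sequence around the loop so that the winding is exactly $1$ on $[\alpha_j]$ and $0$ on the remaining cycles (which are disjoint from the wave's support). Iterating this construction $|m_j|$ times, with orientation determined by $\mathrm{sign}(m_j)$, and placing the successive waves so that their supports are pairwise non-neighboring, produces by Corollary~\ref{cor:sum of waves} a continuous state of degree $m_j$ on $[\alpha_j]$ and $0$ elsewhere. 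Second, I would take the sum $u=\sum_{j=1}^{g}\phi_j$ over all generators, again arranging the supports $X_1,\dots,X_g$ to be mutually non-neighboring so that Corollary~\ref{cor:sum of waves} applies; additivity then gives $h(u)([\alpha_j])=\sum_i h(\phi_i)([\alpha_j])=m_j$ for every $j$, which is precisely $h(u)=[f]$.

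The main obstacle I anticipate is the geometric realizability of these disjoint, non-neighboring supports carrying a prescribed number of windings. One must verify that a narrow hallway representing $[\alpha_j]$ actually has enough length (in hop-distance) to fit $|m_j|$ consecutive copies of the pattern $0,1,\dots,n-1$ around the loop while keeping each copy's support separated from the next and from the supports of the other generators' waves; this is exactly the earlier observation that a loop supporting nontrivial winding must have length a multiple of $n$, so one needs the hallway long enough, i.e.\ the network dense enough, to accommodate $|m_j|$ full cycles. A secondary point to check is that the resulting $u$ is genuinely continuous across the regions between supports, which follows from the non-neighboring hypothesis of Lemma~\ref{lem:addativity} since intervening nodes sit in state $0$. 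Once the density/length condition is granted, the construction is forced and the degree computation reduces to the additivity lemmas, so the heart of the argument is the combinatorial placement rather than any further homological machinery.
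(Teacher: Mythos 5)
Your overall strategy (build a generator-dual family of continuous states, then sum them using non-neighboring supports and Corollary \ref{cor:sum of waves}) matches the paper's, but there is a genuine gap at your first step, and it is exactly the step the paper's proof is engineered to avoid. You assert that for an \emph{arbitrary} basis $\{[\alpha_1],\dots,[\alpha_g]\}$ of $H_1(\Rips_r(X))$, each $[\alpha_j]$ can be represented by an embedded loop ``meeting each other generator trivially,'' and you support your wave on that entire loop. This is false in general. Take $\Domain$ to be a fattened theta graph: at the graph level every cycle uses two of the three edges, so any two independent cycles share an edge, and hence any two loops representing independent classes must run through a common corridor. If that corridor is narrow in the network sense --- all nodes across its width mutually neighboring --- then no two non-neighboring full-loop supports generating $H_1$ exist, and your construction cannot begin; the deformation retraction onto $G$ and the isomorphism $\pi_*$ give no control over such intersections. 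The same width obstruction defeats your iteration step: a state that winds once around a loop has the whole loop as its support, so making $|m_j|$ copies ``pairwise non-neighboring'' would force $|m_j|$ parallel disjoint loops inside a corridor that may be only one node wide.

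The paper's proof avoids both problems by two linked choices. First, it uses the specific basis of $H_1(G)$ dual to the complement of a spanning tree $T$: each basis cycle $\alpha_i'$ then contains a \emph{private} edge $e_i$ lying on no other basis cycle. Second, the state $\phi_i$ realizing $h(\phi_i)([\alpha_j])=\delta_{ij}$ is not a state winding around the loop $\alpha_i'$; it is a \emph{single wave} in the sense of Definition \ref{def:wave} --- a localized pulse supported on a barrier inside $r^{-1}(e_i)$, across which the states run once through $0,1,\dots,n-1$. Because this support sits inside the single corridor $r^{-1}(e_i)$, which representatives of every other basis cycle avoid entirely, the Kronecker delta property is automatic, and the $|f([\alpha_i])|$ copies needed for an arbitrary class are stacked \emph{lengthwise} along that one corridor --- a demand on corridor length (network density), not on corridor width. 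If you replace your ``embedded loops meeting trivially'' step by this spanning-tree-plus-pulse construction, the remainder of your argument (additivity via Lemma \ref{lem:addativity} and Corollary \ref{cor:sum of waves}) goes through essentially as you wrote it.
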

\begin{proof}
We start by selecting a specific basis for $H_1(G)$, using the standard basis of the complement of a spanning tree $T$: each  remaining edge corresponds with an element in a basis of $H_1(G)$. Let this basis be $\{[\alpha_1'],\dots,[\alpha_g']\}$, and the edges in corresponding sequence be $e_1,\dots,e_g$, where each $e_i$ is contained in only one element $\alpha_i'$.
For each $i$, there exist at least one single wave $\phi_i$ that is supported only on a subnetwork in $r^{-1}(e_i)$, and satisfies $h(\phi_i)([\alpha_j])=\delta_{ij}$, and $h(-\phi_i)([\alpha_i])=-1$. From the density assumption on the network $X$, those waves can be supported on non-neighboring subnetworks and therefore we can sum $[f]([\alpha])$ of them up to obtain a continuous state $\phi_i'$ such that $h(\phi_i')([\alpha])=[f]([\alpha])\delta_{ij}$ by Corallary \ref{cor:sum of waves}. From the same argument, $\sum\phi_i'$ is a continuous state which maps to $[f]$ under $h$.
\end{proof}

\section{Link Failure Analysis}
\label{sec:link failure}
Reliability of links is a serious issue for achieving stability of WSN \cite{Akyildiz02wirelesssensor, wsn_yick}; in practice, stability is not guaranteed, as wireless communication quality is unpredictable under different environmental and other physical conditions \cite{packet_delivery}. For our GHM system, it is important to keep communication stable, especially the links between sensors of state $0$ and state $1$, since they will determine those nodes' state at the next time step.

In this section, we will assume that every link works well with a fixed probability $\pofs$, as a more practical GHM system. By modifying our simulation accordingly, we observe that most of the nodes goes to state $0$ after the first several steps, as before. Afterwards, either the system dies out if there is no defect (either local or global defects), or wavefronts are generated around local defects. But these defects do not guarantee the system's periodicity, since link failure might result in their dying, with a probability associated with $\pofs$.

For a fixed network $X$, if given an initial state $u$ with at least one local defect, the probability that one local defect dies after $T$ time steps is a function $f$ of $X$, $u$, $T$ and $\pofs$. The smaller $\pofs$ is, the bigger the probability of defect dying. Meanwhile, $f(X,u,T,\pofs)$ is an increasing function of $T$, which approaches 1 as $T$ goes to infinity.

Although local defects die eventually almost surely, it does not affect pattern propagation. For a continuous state of waves with no local defect, which are what remain in the network after all local defects die, it could either be in a trivial cohomology class, which will die out after a while, or has at least one global defect. As in the latter case, wave propagation is not necessarily the same as in the deterministic model, since a state 0 wavefront may not turn into a state 1 wavefront. However, even this wavefront does not update to state 1 as a whole, it is of great chance that at least one of the nodes on the wavefront successfully update to state 1 (which still makes a global defect), and therefore will gradually correct the neighbors states by contact.

\section{Conclusion}
\label{sec:conclusion}

In this paper, we provide a decentralized, coordinate-free, energy-efficient intruder-detection protocol based on the Greenberg-Hastings cyclic cellular automata. The system could easily be adapted to real indoor environment if using sensing devices functioned with communication and proper sensing ranges. It displays coherence  in the sense that it is a self-assembling system with random initial conditions; its efficiency comes from low power-consuming property inherited from the scheme of the CCA. Demonstrations in \S\ref{sec:GHM} are evidence that the system behaves as intended, and this paper gives both intuition and rigor about how and why the system  works:
\begin{itemize}
\item Wave patterns are explained as a topological phenomenon, determined and described by the existence of defects with nonzero degree.
\item Assigning to wavefronts a cohomology class reveals the qualitative structure of the wavefront patterns, greatly clarifying certain classical results about CCA on lattices.
\item A non zero restriction of a cohomology class to a subdomain corresponds with a set of strategies with which the evader could win the evasion game; meanwhile, a zero restriction stands for the failure of the evader: the cohomology class is the {\em obstruction} for the pursuer to win. This pleasantly resonates with the role of cohomology in obstruction theory.
 \end{itemize}

\bibliographystyle{abbrv}
\bibliography{citation}

\end{document}